\documentclass[a4paper,leqno,12pt,twoside]{amsart}
\usepackage{amsmath,amsfonts,amssymb,amsthm,amscd}
\usepackage[utf8]{inputenc}
\usepackage[T1]{fontenc}
\usepackage[english]{babel}
\usepackage[colorlinks=true,citecolor=blue, urlcolor=blue, linkcolor=blue,pagebackref]{hyperref}
\usepackage[top=1.2in,bottom=1.2in,left=1.in,right=1.in]{geometry} 
\usepackage{graphicx}
\usepackage{paralist}
\usepackage{tabto}
\usepackage{standalone}
\usepackage{tikz}
\usetikzlibrary{matrix}
\usetikzlibrary{arrows}
\usepackage{xfrac}
\usepackage{amsthm, amsmath, amssymb,latexsym}
\usepackage{tikz-cd}
\usepackage[all]{xy} 

\usepackage{enumerate}
\usepackage{xcolor}
\usepackage{aliascnt}
\usepackage{cleveref}

\newtheorem{theorem}{Theorem}[section]
\newtheorem{lemma}[theorem]{Lemma}
\newtheorem{corollary}[theorem]{Corollary}
\newtheorem{proposition}[theorem]{Proposition}
\newtheorem{remark}[theorem]{Remark}
\newtheorem{definition}[theorem]{Definition}


\def\ra{\rightarrow}

\title{Gaussian maps on trigonal curves}

\begin{document}
 \author{A. Lacopo}
  \email{antonio.lacopo01@universitadipavia.it}
\begin{abstract}
    In this paper we study higher even Gaussian maps of the canonical bundle for cyclic trigonal curves. More precisely, we study suitable restrictions of these maps determining a lower bound for the rank, and more generally, a lower bound for the rank for the general trigonal curve. We also manage to give the explicit description of the kernel of $\mu_2$. Finally, we use these results to show the non existence of "extra" asymptotic directions for cyclic trigonal curves in some subspaces of $H^1(T_C)$ generated by higher Schiffer variations.
\end{abstract}

\thanks
{A. Lacopo is a member of GNSAGA (INdAM) and is partially supported by PRIN project {\em Moduli spaces and special varieties} (2022).}
\maketitle
\section{Introduction}
In this paper we will study higher Gaussian maps of the canonical bundle for some special trigonal curves, that are cyclic covers of $\mathbb{P}^1$. We will compute the rank of some suitable restrictions of even order maps and then we will use these results in order to show the non existence of "extra" asymptotic directions in some subspaces of $H^1(T_C)$ generated by higher Schiffer variations. \\ 
Let $C$ be a smooth projective curve and let us denote by $I_2(K_C)$ the kernel of the multiplication map 
$$\mu_0:Sym^2H^0(K_C)\rightarrow H^0(K_C^{\otimes 2}),$$
the second Gaussian map $\mu_2$ is a linear map $$\mu_2:I_2(K_C)\rightarrow H^0(K_C^{\otimes 4})$$ and more generally, for any $k\geq 2$, the $2k$-th Gaussian map is a linear map 
$$\mu_{2k}:Ker(\mu_{2k-2})\rightarrow H^0(K_C^{\otimes 2k+2}).$$ These maps were introduced by Wahl in \cite{Wahl1987TheJA}.\\ 
There are two main reasons why Gaussian maps are studied: the first one is the strong connection between even order maps and the second fundamental form of the Torelli map, and the second one is that surjectivity of not of these maps gives information about the geometry of the given variety. Let us now better explain these two reasons, beginning from the first one. \\  
Assume $g\geq 4$. Recall that we have the Torelli map
    $$
    j:\mathcal{M}_g\rightarrow \mathcal{A}_g
    $$
    $$
    [C]\mapsto [J(C),\Theta_C],
    $$
    mapping the class of a curve to its Jacobian, together with the polarization given by the Theta divisor. The map $j$ is injective (Torelli Theorem) and an orbifold embedding outside the hyperelliptic locus $\mathcal{H}_g$. We denote by $\mathcal{M}^0_g$ the complement of the hyperelliptic locus.\\
    Consider the tangent bundle exact sequence 
\begin{equation}
\label{tangent}
0 \rightarrow T_{{\mathcal M}^0_g} \stackrel{dj}\rightarrow  T_{{{\mathcal A}_{g}}_{|{\mathcal M}^0_g}} \stackrel{\pi}\rightarrow {\mathcal N_{{\mathcal M}^0_g/{\mathcal A}_{g}}} \rightarrow 0.
\end{equation}

Denote by  $$\sigma: T_{{\mathcal M}^0_g} \otimes T_{{\mathcal M}^0_g} \rightarrow {\mathcal N_{{\mathcal M}^0_g/{\mathcal A}_{g}}}, \ X \otimes Y \mapsto \pi (\nabla_X(Y))$$ its  second fundamental form. Denote by $\rho$ the dual of  $\sigma$,
$$\rho: {\mathcal N^{\vee}_{{\mathcal M}^0_g/{\mathcal A}_{g}}} \rightarrow Sym^2(\Omega^1_{{\mathcal M}^0_g}) .$$
At the point  $[C] \in {\mathcal M}_g^0$ the dual of the exact sequence  \eqref{tangent} is
$$0 \rightarrow I_2(K_C)  \rightarrow S^2H^0(K_C) \stackrel {m}\rightarrow H^0(K_C^{\otimes 2})
\rightarrow 0,$$ where $m$ is the multiplication map.  Hence at the point  $[C]$ we have a linear map
$$\rho:I_2(K_C)\rightarrow Sym^2(H^0(K_C^{\otimes 2})).$$  A result due to Colombo,Pirola,Tortora gives a strong relation between the second fundamental form $\rho$ and $\mu_2$:
    $$
    \rho(Q)(\xi_p\odot \xi_p)=-2\pi i\mu_2(Q)(p),
    $$
    where $p\in C$, $Q\in 
    I_2(K_C)$ and $\xi_p$ is the Schiffer variation at $p$.\\
    This result was later generalized by Frediani (see \cite{Frediani2025} for more details), showing a connection between $\rho$ and even order higher Gaussian maps. We recall the following definition, which is of great importance for our purpose.
	\begin{definition}
		A nonzero direction $\zeta \in H^1(T_C)$ is said to be asymptotic if $\rho(Q)(\zeta \odot \zeta) = 0$, $\forall Q \in I_2$. 
	\end{definition}
	  Remarkable results in the study of asymptotic directions in the trigonal case are due to Colombo-Frediani-Pirola, who classified the rank one case. They proved the following result.
	\begin{theorem}[\cite{ColomboFredianiPirola+2025}]
		If $C$ is a trigonal (non hyperelliptic) curve of genus $g\geq 8$, or of genus $g=6,7$ and Maroni degree $2$, the rank one asymptotic directions are exactly the Schiffer variations in the ramification points of the $g^1_3$.
	\end{theorem}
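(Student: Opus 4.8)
The plan is to translate the statement into the projective geometry of the canonical curve and reduce the asymptoticity of a rank one direction to an incidence condition with the trigonal scroll. First I would make the space of rank one directions explicit. By Noether's theorem the multiplication map $m\colon \Sym^2 H^0(K_C)\to H^0(K_C^{\otimes 2})$ is surjective (as $C$ is non hyperelliptic), so the dual sequence identifies $H^1(T_C)=H^0(K_C^{\otimes 2})^{\vee}$ with the space of symmetric bilinear forms on $H^0(K_C)$ annihilating $I_2:=I_2(K_C)$; under this identification the rank of $\zeta$ (i.e. of the cup product $\cup\,\zeta\colon H^0(K_C)\to H^1(\ol_C)\cong H^0(K_C)^{\vee}$) is the rank of the associated form. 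Hence a nonzero rank one direction is exactly one of the shape $\ell\otimes\ell$ with $\ell\in H^0(K_C)^{\vee}$, and annihilation of $I_2$ says precisely that $[\ell]$ lies on every quadric through $\phi_K(C)\subset\mathbb P^{g-1}$. For a trigonal curve the base locus of these quadrics is the rational normal scroll $\Sigma$ swept out by the lines spanned by the fibres of the $g^1_3$; under the stated hypotheses ($g\ge 8$, or $g=6,7$ with Maroni degree $2$) the work of Petri and Schreyer guarantees that $\Sigma$ is exactly this base locus and that its ruling is the family of trigonal fibres. Thus the rank one directions are parametrised, up to scalar, by the points $x\in\Sigma$, and I write $\zeta_x=\ell_x\otimes\ell_x$, with $\zeta_x=\xi_p$ the Schiffer variation when $x=\phi_K(p)\in C$.

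Next I would treat the points lying on the curve. For $x=\phi_K(p)$ the Colombo--Pirola--Tortora formula gives $\rho(Q)(\xi_p\odot\xi_p)=-2\pi i\,\mu_2(Q)(p)$, so $\xi_p$ is asymptotic iff $\mu_2(Q)(p)=0$ for all $Q\in I_2$. Writing $Q=\sum_{i,j}q_{ij}\,\omega_i\omega_j$ and $\omega_i=f_i\,dz$ in a local coordinate, the identity $\sum q_{ij}f_if_j\equiv 0$ together with its first two derivatives shows that $\mu_2(Q)(p)$ equals, up to a nonzero constant, the value $\sum_{i,j}q_{ij}f_i'(p)f_j'(p)=Q(\ell_p')$, where $\ell_p'$ is the tangent vector to $\phi_K(C)$ at $p$; the relation $Q(\ell_p,\ell_p')=0$ makes this independent of the chosen frame. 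Hence $\xi_p$ is asymptotic iff $Q(\ell_p')=0$ for all $Q\in I_2$, i.e. the embedded tangent line $\mathbb T_p=\overline{[\ell_p][\ell_p']}$ is contained in $\Sigma$. Since the ruling of the surface $\Sigma$ through $\phi_K(p)$ is the line spanned by the fibre of the $g^1_3$ through $p$, the tangent line lies on $\Sigma$ precisely when it coincides with that ruling, i.e. when $C$ is tangent to the fibre at $p$; this is exactly the condition that $p$ be a ramification point of the $g^1_3$. This proves that the Schiffer variations at the ramification points are rank one asymptotic directions.

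The main obstacle is to rule out the remaining rank one directions, namely the $\zeta_x$ with $x\in\Sigma\setminus C$, which a priori could also be asymptotic. Here the Schiffer shortcut is unavailable, so I would compute $\rho(Q)(\zeta_x\odot\zeta_x)$ directly. Realising $\zeta_x$ as the evaluation functional $\sigma\mapsto\sigma(x)$ on $H^0(K_C^{\otimes 2})$ (well defined because $x\in\Sigma$ annihilates $I_2$), and using the Hodge-theoretic expression for the second fundamental form underlying the Colombo--Pirola--Tortora and Frediani formulas, I would express $\rho(Q)(\zeta_x\odot\zeta_x)$ as an explicit quartic invariant of $Q$ attached to the ruling of $\Sigma$ through $x$. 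With Schreyer's explicit equations for $\Sigma$ and for the quadrics of $I_2$ in the coordinates coming from $\mathcal O_{\mathbb P^1}(a)\oplus\mathcal O_{\mathbb P^1}(b)$, $a+b=g-2$, the locus $\{x\in\Sigma:\rho(Q)(\zeta_x\odot\zeta_x)=0\ \forall Q\in I_2\}$ becomes an explicit system on $\Sigma$ whose only solutions I expect to be the ramification points of the $g^1_3$, all of which lie on $C$. The delicate point—and where the hypotheses $g\ge 8$ (resp. $g=6,7$ and Maroni degree $2$) enter decisively—is that enough independent quadrics must be present to force $x$ onto the curve: for small genus or large Maroni invariant the quadrics are too few and spurious asymptotic directions on $\Sigma\setminus C$ could survive. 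Carrying out this computation on the scroll, and verifying that it pins down exactly the ramification locus, is the heart of the argument.
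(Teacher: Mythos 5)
First, a point of calibration: this statement is quoted in the paper from \cite{ColomboFredianiPirola+2025} and is \emph{not} proved there, so there is no internal proof to measure you against; your attempt can only be judged against what a complete argument would require. Your first half is sound and matches the standard framework: identifying $H^1(T_C)$ with the annihilator of $I_2(K_C)$ in $\Sym^2 H^0(K_C)^{\vee}$, so that rank one directions are the $\ell\otimes\ell$ with $[\ell]$ in the base locus of the quadrics through the canonical curve, which for a trigonal curve is the scroll $\Sigma$; and the computation showing that for $p\in C$ the Colombo--Pirola--Tortora formula reduces asymptoticity of $\xi_p$ to $Q(\ell_p')=0$ for all $Q\in I_2$, i.e.\ to the tangent line of $\phi_K(C)$ at $p$ lying on $\Sigma$, which for a line on a scroll means the ruling, i.e.\ $p$ a ramification point of the $g^1_3$. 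That direction of the theorem is essentially complete in your write-up.

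The genuine gap is the second half, and you flag it yourself: for $x\in\Sigma\setminus C$ you only \emph{announce} a computation and conclude with ``whose only solutions I expect to be the ramification points.'' An expectation is not a proof, and this is precisely the hard implication of the theorem. Two concrete repairs are needed. (i) You never use the available structure: any $x\in\Sigma$ lies on a ruling spanned by a fibre $p_1+p_2+p_3$ of the $g^1_3$, so $\ell_x$ is a linear combination of the $\ell_{p_i}$ and $\rho(Q)(\zeta_x\odot\zeta_x)$ expands into the mixed terms $\rho(Q)(\xi_{p_i}\odot\xi_{p_j})$ at \emph{distinct} points; these are computable by the known kernel formulas of Colombo--Pirola--Tortora/Frediani (the off-diagonal analogue of the $\mu_2$ formula), which is the tool that actually makes the exclusion on $\Sigma\setminus C$ tractable, rather than an unspecified ``quartic invariant'' in Schreyer coordinates. (ii) You misattribute the role of the hypotheses: the base locus of the quadrics is the scroll for \emph{every} trigonal curve of genus $\geq 5$, so ``enough independent quadrics'' is not where $g\geq 8$, resp.\ $g=6,7$ with Maroni degree $2$, enters. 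These hypotheses force the scroll $S(a,b)$, $a+b=g-2$, to have $a\geq 2$, hence to contain no lines other than the rulings; when $a=1$ the line directrix produces exactly the extra asymptotic phenomena that the paper itself records for genus $6,7$ and Maroni degree $1$. Without identifying this mechanism, your argument cannot explain why the exceptional cases are excluded, and the tangent-line step in your first half also silently uses $a\geq 2$ (a tangent line could a priori coincide with a line directrix). So the proposal is a reasonable strategy sketch aligned with the known approach, but the core exclusion argument is missing, not merely compressed.
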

	Moreover, the authors also proved that for trigonal curves of genus $5$ or genus $g=6,7$ and Maroni degree $1$, there can exist rank $2$ asymptotic directions that are not Schiffer variations in the ramification points of the $g^1_3$.\\ \\
    As for the second reason, we recall some famous results. The first map which was studied was $\mu_1$, which is a map $$\mu_1:\Lambda^2H^0(K_C)\rightarrow H^0(K_C^{\otimes 4}).$$ 
    Wahl proved that if $C$ is a curve lying on a $K3$ surface, then $\mu_1$ is not surjective (see \cite{Wahl1987TheJA}). On the other hand, Ciliberto, Harris and Miranda proved that for the general curve of genus $g\geq 10, g\neq 11$, $\mu_1$ is indeed surjective. Similar results hold for $\mu_2:$ Colombo, Frediani and Pareschi in \cite{Colombo2009HyperplaneSO} proved that if $C$ lies on an abelian surface, $\mu_2$ is not surjective; while Calabri, Ciliberto and Miranda (\cite{10.1307/mmj/1320763048} proved the surjectivity of $\mu_2$ for the general curve of genus $g\geq 18$. Hence it looks like that surjectivity of Gaussian maps gives obstruction to the curve for being embedded in particular class of surfaces. \\
    For higher Gaussian maps $(\mu_k$, with $k\geq 3)$ less is known: the main result is due to Rios-Ortiz, who proved the surjectivity of $\mu_k$ for the general curve of genus high enough (see \cite{10.1093/imrn/rnad165}). \\ \\
	Our first main result is the computation of the rank of suitable restrictions of even order Gaussian maps: in order to do that we will follow the ideas of Section \cite{faro2025highergaussianmapshyperelliptic}. This computation allows us to give a lower bound for the rank of $\mu_{2k}$ on cyclic trigonal curves and, more generally, a lower bound for the rank of $\mu_{2k}$ on the general trigonal curve. 
	First, we recall the following known result, due to Colombo and Frediani (\cite{10.1307/mmj/1260475698}).
		\begin{theorem}
		Let $C$ be any trigonal curve of genus $g\geq 8$. Then $$rank(\mu_2)=4g-18.$$
	\end{theorem}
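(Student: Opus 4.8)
The plan is to exploit the rational normal scroll swept out by the trigonal pencil. Since $C$ is trigonal, hence non-hyperelliptic, Max Noether's theorem gives that $\mu_0$ is surjective, so $\dim I_2(K_C)=\binom{g+1}{2}-(3g-3)=\binom{g-2}{2}$. Moreover the canonical model of $C$ lies on a $2$-dimensional rational normal scroll $S=S(d_1,d_2)\subset\mathbb{P}^{g-1}$ of degree $g-2$, and the quadrics through $C$ are exactly those through $S$, i.e.\ the $2\times 2$ minors of the scroll matrix. Writing $t$ for the pullback to $C$ of the coordinate on the base $\mathbb{P}^1$ of the $g^1_3$, I would fix the adapted basis $\sigma_0,\dots,\sigma_{d_1}$ and $\tau_0,\dots,\tau_{d_2}$ of $H^0(K_C)$ coming from the two summands $H^0(\mathcal{O}(d_1))\oplus H^0(\mathcal{O}(d_2))$, where $d_1+d_2=g-2$, $d_1=a-2\le d_2=b-2$, and $\sigma_i=t^i\sigma_0$, $\tau_j=t^j\tau_0$. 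In this basis $I_2(K_C)$ is spanned by the three families of constant index-sum relations $\sigma_p\sigma_{q+1}-\sigma_{p+1}\sigma_q$, $\sigma_p\tau_{q+1}-\sigma_{p+1}\tau_q$ and $\tau_p\tau_{q+1}-\tau_{p+1}\tau_q$.

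The second ingredient is the local description of $\mu_2$ on $\ker\mu_0$. Writing a quadric as $Q=\sum_k\omega_k\eta_k$ with $\omega_k=f_k\,dz$, $\eta_k=g_k\,dz$ in a local coordinate $z$, and using $\sum_k f_kg_k\equiv 0$, one checks that $\mu_2(Q)=\sum_k f_k'g_k'\,(dz)^4$ is independent of $z$, the non-tensorial terms cancelling precisely because $Q\in I_2$. Applying this to the three families, with $\sigma_0=\phi\,dz$ and $\tau_0=\psi\,dz$, a direct computation gives
\begin{gather*}
\mu_2(\sigma_p\sigma_{q+1}-\sigma_{p+1}\sigma_q)=(p-q)\,t^{\,p+q-1}\phi^2\,(dz)^4,\\
\mu_2(\tau_p\tau_{q+1}-\tau_{p+1}\tau_q)=(p-q)\,t^{\,p+q-1}\psi^2\,(dz)^4,\\
\mu_2(\sigma_p\tau_{q+1}-\sigma_{p+1}\tau_q)=(p-q)\,t^{\,p+q-1}\phi\psi\,(dz)^4+t^{\,p+q}\bigl(\phi'\psi-\phi\psi'\bigr)(dz)^4.
\end{gather*}
The decisive feature is that in the two pure families the Wronskian term cancels by symmetry, so each index-sum level contributes a single section, whereas in the mixed family the Wronskian $W=\phi'\psi-\phi\psi'$ survives and each level contributes two independent sections.

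The rank computation then reduces to counting levels. For $g\ge 8$ the Maroni bound $0\le n\le (g+2)/3$ on the invariant $n=d_2-d_1$ forces $d_1\ge 2$, so that both rulings carry enough sections and all three families are non-degenerate; this is exactly where the hypothesis $g\ge 8$ enters. Granting linear independence, the pure families contribute $t^m\phi^2(dz)^4$ for $2d_1-3$ values of $m$ and $t^m\psi^2(dz)^4$ for $2d_2-3$ values, while the mixed family contributes the sections $t^m\phi\psi(dz)^4$ together with the Wronskian sections $t^mW(dz)^4$, totalling $2(d_1+d_2)-4$. Summing gives
\[
(2d_1-3)+(2d_2-3)+\bigl(2(d_1+d_2)-4\bigr)=4(d_1+d_2)-10=4g-18,
\]
and dually $\dim\ker\mu_2=\binom{g-2}{2}-(4g-18)=\binom{g-6}{2}$, matching the explicit description of the kernel announced in the abstract.

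The crux, and the step I expect to be the main obstacle, is proving that the image sections $t^m\phi^2$, $t^m\phi\psi$, $t^m\psi^2$ and $t^mW$ (times $(dz)^4$) are genuinely linearly independent in $H^0(K_C^{\otimes 4})$ and non-vanishing at the extreme levels. I would settle this by pushing forward along the $g^1_3$: the decomposition of $\pi_*K_C^{\otimes 4}$ on $\mathbb{P}^1$ separates the coefficient functions $\phi^2,\phi\psi,\psi^2$, which are the symmetric squares of the two independent rulings, from the Wronskian $W$, with the powers of $t$ distinguishing sections inside each graded piece; the boundary non-vanishing is then checked against the explicit minors realizing the extreme levels. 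Handling this independence uniformly in the Maroni invariant, and isolating the $\phi\psi$-- and $W$--contributions which a priori sit in the same component, is where the real work lies, the bound $g\ge 8$ guaranteeing that the combinatorics of levels is the generic one.
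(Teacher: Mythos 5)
Your reduction and your level-counting are correct as far as they go, and they run structurally parallel to what the paper does for the cyclic curves \eqref{trigonal1}: your three contributions $(2d_1-3)+(2d_2-3)+\bigl(2(d_1+d_2)-4\bigr)=4g-18$ are exactly the paper's $2n_1-3$, $2n_2-3$ and $2r-12=2g-8$ from Theorem \ref{rank} and Theorem \ref{mu2tensor}, and your identification of where $g\geq 8$ enters (the Maroni bound gives $d_1\geq (g-4)/3$, hence $d_1\geq 2$) is right. But the step you defer is not a technicality to be settled later; it is the actual content of the theorem, and the mechanism you propose for it cannot work as stated. The pushforward $\pi_*K_C^{\otimes 4}$ has rank $3$, so no decomposition of it can separate the \emph{four} coefficient families $\phi^2,\phi\psi,\psi^2,W$: over $\mathbb{C}(t)$ they are necessarily dependent. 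Concretely, $y=\psi/\phi$ satisfies a cubic $y^3+b_2(t)y^2+b_1(t)y+b_0(t)=0$, and differentiating expresses $\dot y$ (hence $W$, up to units) as a $\mathbb{C}(t)$-linear combination of $1,y,y^2$ with a discriminant-type denominator. So the independence of a relation $A(t)+B(t)y+C(t)y^2+D(t)\dot y=0$ must come from degree bookkeeping --- e.g.\ $\deg D\leq g-4$ from your level ranges against the branch divisor of the $g^1_3$, of degree $2g+4$ --- and this argument (or an explicit elimination like the paper's proof of Theorem \ref{mu2tensor}) has to be carried out; your proposal stops exactly at this point. Note that even in the cyclic case $\phi\psi$ and $W$ carry the \emph{same} $\mathbb{Z}/3$-character, which is precisely why the paper cannot settle the mixed block by equivariance and instead computes it by hand via symmetric functions.

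Two further remarks. First, be aware that the paper itself does not prove this statement in the generality you are attempting: it cites Colombo--Frediani for arbitrary trigonal curves of genus $g\geq 8$, and its own computations only reprove the cyclic case, where the eigenspace decomposition makes the three blocks independent of one another for free; if you completed the independence step in Maroni coordinates you would be proving strictly more than the paper does, so the gap is exactly the part not covered by anything in the paper. Second, your displayed formulas drop Jacobian factors: the correct global expressions are $(p-q)\,t^{p+q-1}(dt)^{2}\otimes\sigma_0^{\odot 2}$ for the pure families and $(p-q)\,t^{p+q-1}(dt)^{2}\otimes\sigma_0\odot\tau_0+t^{p+q}\,dt\otimes W$ for the mixed one, where $W$ is the Wronskian of $\sigma_0,\tau_0$, a section of $K_C^{\otimes 3}$ --- i.e.\ a factor $(t')^{2}$ on the pure terms and a single $t'$ on the Wronskian term in your local coordinate. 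This is harmless for the count, but it changes pole orders at the ramification points of the $g^1_3$, which matters if you try to prove the independence by local vanishing-order comparisons there.
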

	From Theorem above, we see that we need to study the rank of $\mu_{2k}$ for $k\geq 2$. The result is the following.
	 
	\begin{theorem}(See Theorem \ref{theogentrig})
		\label{unotheo}
		Let $C$ be a general trigonal curve of genus $g\geq 16$, let $2\leq k\leq \lfloor\frac{g-4}{6}\rfloor$ be an integer. Then 
		$$rank(\mu_{2k})\geq 2g-8k-2.$$
	\end{theorem}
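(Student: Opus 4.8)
The plan is to reduce the statement for the general trigonal curve to an explicit computation on a single \emph{cyclic} trigonal curve, and then to use the order-$3$ automorphism to convert the higher Gaussian maps into Gaussian (Wronskian-type) maps on $\mathbb{P}^1$, whose rank can be estimated in a monomial basis. The lower bound is deduced by exhibiting enough linearly independent images rather than by computing the map exactly.

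\textbf{Step 1 (Semicontinuity).} Since for fixed $g$ the dimensions $h^0(K_C^{\otimes m})$ are constant, the rank of $\mu_{2k}$ is lower semicontinuous on the trigonal locus, so it suffices to produce one trigonal curve $C_0$ with $rank(\mu_{2k}) \geq 2g - 8k - 2$. The delicate point is that the domain $Ker(\mu_{2k-2})$ may itself jump; to bypass this I do not work with the full kernel but with a distinguished subspace $W \subseteq Ker(\mu_{2k-2})$ that is cut out by the trigonal geometry, i.e.\ by the relations coming from the rational normal scroll swept out by the $g^1_3$. Such $W$ persists throughout the family of trigonal curves, so it is enough to bound $rank(\mu_{2k}\restr{W})$ from below on $C_0$.

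\textbf{Step 2 (Eigenspace reduction).} For $C_0$ I take a cyclic cover $y^3 = f(x)$ of $\mathbb{P}^1$ with automorphism $\tau$ of order $3$. All the spaces $H^0(K_{C_0}^{\otimes m})$, $I_2(K_{C_0})$ and the iterated kernels split into $\tau$-eigenspaces, and since $\mu_{2k}$ is built from the canonical derivation it is $\tau$-equivariant and preserves this grading. Writing pluricanonical forms in the monomial basis $x^a y^{-b}(dx)^{\otimes m}$ and using $\pi_*\mathcal{O}_{C_0} = \mathcal{O} \oplus L^{-1} \oplus L^{-2}$, each eigenstrand is identified with a space of sections $H^0(\mathbb{P}^1,\mathcal{O}(d))$ of an explicit line bundle on the base. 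The target $H^0(K_{C_0}^{\otimes 2k+2})$ decomposes in the same way, and $H^0(K_{C_0})=V_1\oplus V_2$ has no weight-$0$ part, so the bound should split as the sum $2(g-4k-1)$ of equal contributions from the two conjugate eigenstrands.

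\textbf{Step 3 (Gaussian maps on $\mathbb{P}^1$).} Under this identification the restriction of $\mu_{2k}$ to a single eigenstrand becomes a higher Gaussian/Wronskian operator between spaces of polynomials on $\mathbb{P}^1$. Following the strategy of \cite{faro2025highergaussianmapshyperelliptic}, I write the matrix of this operator in the monomial basis: it is triangular with respect to the degree filtration, so its rank is governed by the top-degree coefficients, which are explicit nonzero multinomials. Counting the monomials that survive the iterated differentiation in the admissible range then yields the contribution $g-4k-1$ from each strand, hence $2g-8k-2$ in total.

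\textbf{Main obstacle.} The hardest part is the combinatorial rank computation of Step 3 together with the verification that the chosen $W$ genuinely lies in every $Ker(\mu_{2k-2})$. One must confirm that the leading coefficients of the iterated operator do not vanish, and control the lower-order terms so that no unexpected rank drop occurs. The constraint $2 \leq k \leq \lfloor \tfrac{g-4}{6}\rfloor$ is precisely what guarantees that the relevant eigenstrands of $H^0(\mathbb{P}^1,\mathcal{O}(d))$ remain large enough (and the scroll relations defining $W$ remain available) for the expected number of independent images to exist; once this range is respected, the triangularity argument closes the proof.
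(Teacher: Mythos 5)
Your overall skeleton---specialize to a cyclic cover $y^3=f(x)$, use the $\mathbb{Z}/3$-eigenspace decomposition, compute ranks strand by strand in monomial bases, and pass to the general trigonal curve by semicontinuity---is exactly the paper's route (Theorem \ref{rank} plus its corollary, then Theorem \ref{theogentrig}). But the engine that makes the computation tractable is missing from your proposal: the paper never works with $\mu_{2k}$ on $I_2(K_C)$ directly. It uses the isomorphism $\psi\colon \Lambda^2H^0(M)\to I_2$ with $M=K_C\otimes L^{\vee}$ (Lemma \ref{lemmaiso}) together with the identity $rk(\mu_{2k})=rk(\mu_{2k-1,M})$, valid because $\mu_{1,L}(s\wedge t)$ is a nonzero section; only then does the equivariant splitting $\Lambda^2 W_1\oplus\Lambda^2 W_2\oplus W_1\otimes W_2$ into distinct character spaces of the target make the ranks add. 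Without this reduction your Step 3 is working with the wrong object, and indeed your count comes out wrong in detail: the two strands are \emph{not} conjugate and do \emph{not} contribute equally. One has $\dim W_i=n_i$ with $n_1\neq n_2$ in general, the per-strand ranks are $2n_i-4k+1$ (Proposition \ref{eqmu2k-1} and Theorem \ref{rank}), and the total is $2g-8k-2$ only because $n_1+n_2=g-2$; your claimed equal contributions $g-4k-1$ are incorrect. Moreover the combinatorics is not a triangularity-of-leading-terms argument: the kernel equations group along antidiagonals $i+j=l$, and the rank count is a row-reduction showing how many equations per $l$ are independent (compare the proof of Theorem \ref{mu2tensor}, where subtracting the first equation from the others leaves exactly two independent ones per $l$). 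Asserting that ``leading multinomials do not vanish'' does not substitute for this, so Step 3 is an undone computation rather than a proof.

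Your Step 1 contains a second, more structural gap. You rightly flag that the domain $Ker(\mu_{2k-2})$ can jump in the family, but the proposed repair---a subspace $W\subseteq Ker(\mu_{2k-2})$ ``cut out by the scroll relations'' that persists over the trigonal locus---does not exist as described. For a trigonal curve the quadrics through the scroll are \emph{all} of $I_2(K_C)$ (this is precisely the content of Lemma \ref{lemmaiso}), so the scroll cuts out no proper subspace of $I_2$, let alone of the iterated kernels; and those kernels genuinely depend on the moduli, since their equations involve the branch points (see \eqref{poly1}). The subspaces actually exploited at the special curve, $\psi(\Lambda^2 W_1)$ and $\psi(\Lambda^2 W_2)$, come from the $\mathbb{Z}/3$-action and do not deform to non-cyclic trigonal curves. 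The paper itself passes from the cyclic case to the general trigonal curve by bare lower semicontinuity of the rank, so your instinct that this step deserves care is reasonable---but your $W$ is vacuous and does not resolve it. In sum: correct strategy, but three concrete holes remain as proposed---the missing reduction $rk(\mu_{2k})=rk(\mu_{2k-1,M})$ via $I_2\cong\Lambda^2H^0(M)$, the false equal-splitting claim (and with it the unexecuted antidiagonal rank count), and the nonexistent persistent subspace $W$.
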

    Theorem \ref{unotheo} allows us to prove the following result.
    \begin{theorem}(See Theorem \ref{asytrig}
        Let $C$ be any cyclic trigonal curve of genus $g\geq 16$ and let $V\subset H^1(T_C)$ be the set of asymptotic directions. Then for a general point $p\in C$,  $V\cap \langle \xi_p^1,...,\xi_p^{\lfloor\frac{n_2}{2}\rfloor} \rangle=0$.
    \end{theorem}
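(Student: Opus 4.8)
The plan is to fix the general point $p$, put $m:=\lfloor n_2/2\rfloor$, and prove that the only $\zeta\in W:=\langle \xi_p^1,\dots,\xi_p^m\rangle$ with $\rho(Q)(\zeta\odot\zeta)=0$ for every $Q\in I_2(K_C)$ is $\zeta=0$. Writing $\zeta=\sum_{j=1}^m a_j\,\xi_p^j$ and expanding bilinearly,
\begin{equation*}
\rho(Q)(\zeta\odot\zeta)=\sum_{i,j=1}^{m} a_i a_j\,\rho(Q)(\xi_p^i\odot\xi_p^j),\qquad Q\in I_2(K_C),
\end{equation*}
the task becomes: the vanishing of the right-hand side for all $Q$ forces $a_1=\dots=a_m=0$. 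I would prove this by induction on $k$, showing that $a_1=\dots=a_{k-1}=0$ implies $a_k=0$; the convention $Ker(\mu_0)=I_2(K_C)$ makes $k=1$ the base case.

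The first step is to rewrite the pairings through higher Gaussian maps. Using Frediani's extension of the Colombo--Pirola--Tortora identity, together with the explicit form of the higher Schiffer variations on a cyclic trigonal curve worked out earlier in the paper, I expect that for $Q\in Ker(\mu_{2k-2})$ the pairing $\rho(Q)(\xi_p^i\odot\xi_p^j)$ vanishes as soon as $\min(i,j)\le k-1$, while for $i,j\ge k$ it equals a constant $c_{ij}$ (with $c_{kk}\neq 0$) times the coefficient of order $i+j-2k$ in the local expansion at $p$ of the section $\mu_{2k}(Q)\in H^0(K_C^{\otimes 2k+2})$; the diagonal case $\rho(Q)(\xi_p^k\odot\xi_p^k)=c_{kk}\,\mu_{2k}(Q)(p)$ extends the $k=1$ formula recalled in the introduction. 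Granting this and using the inductive hypothesis to drop every term with $i<k$ or $j<k$, restriction of $Q$ to $Ker(\mu_{2k-2})$ turns the asymptotic condition into
\begin{equation*}
0=\sum_{s=0}^{2m-2k} B_s\,\big(\mu_{2k}(Q)\big)^{(s)}(p),\qquad B_s=\!\!\sum_{\substack{i+j=2k+s\\ i,j\ge k}}\!\! a_i a_j\,c_{ij},\qquad \forall\,Q\in Ker(\mu_{2k-2}),
\end{equation*}
with $B_0=c_{kk}\,a_k^2$. Thus a fixed jet functional at $p$ of order at most $2m-2k$ annihilates the whole image of $\mu_{2k}$.

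The second step extracts $a_k$ from this. By the rank computation for cyclic trigonal curves and by Theorem \ref{unotheo} one has $rank(\mu_{2k})\ge 2g-8k-2$, and for $g\ge 16$ and $1\le k\le m$ this is $\ge 2m-2k+1=\dim J^{2m-2k}_p(K_C^{\otimes 2k+2})$; verifying the inequality $m=\lfloor n_2/2\rfloor\le\lfloor(g-4)/6\rfloor$ (so that the rank bound is available for every $k\le m$) is the arithmetic content behind the numerical hypotheses. The image of $\mu_{2k}$ is then a nondegenerate linear series on $C$ of projective dimension $rank(\mu_{2k})-1\ge 2m-2k$, so its base locus and higher inflection loci are finite; choosing $p$ general (outside these finitely many loci for each of the finitely many $k\le m$) makes the jet evaluation $\mathrm{Im}(\mu_{2k})\to J^{2m-2k}_p(K_C^{\otimes 2k+2})$ surjective. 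Hence no nonzero jet functional of order $\le 2m-2k$ can annihilate $\mathrm{Im}(\mu_{2k})$, which forces $B_s=0$ for all $s$; in particular $B_0=c_{kk}\,a_k^2=0$, so $a_k=0$, closing the induction.

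The main obstacle will be the first step: proving on a cyclic trigonal curve the precise jet form of the generalized second fundamental form, i.e.\ both the vanishing $\rho(Q)(\xi_p^i\odot\xi_p^j)=0$ for $\min(i,j)<k$ when $Q\in Ker(\mu_{2k-2})$, and the identification of the surviving pairings with the jets of $\mu_{2k}(Q)$ at $p$, including the nonvanishing of the leading constant $c_{kk}$. A second, genuinely necessary, point is the passage from a dimension bound on $\mathrm{Im}(\mu_{2k})$ to surjectivity of its jet map at a general point: positivity of the rank alone is not enough, and one must exclude that $p$ is a higher inflection point of the image series, which is exactly where the genericity of $p$ and the explicit cyclic model are used.
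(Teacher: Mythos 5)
Your overall architecture (induction on the index $k$, reduction to non-degeneracy of the maps $\mu_{2k}$, genericity of $p$ to escape finitely many bad loci) is the right family of ideas, but the proposal has a genuine gap exactly at the step you flag as the ``main obstacle'', and that step is not merely hard: as stated it is unsupported and likely false. Proposition \ref{remarkfrediani} gives the vanishing $\rho(Q)(\xi_p^n\odot\xi_p^l)=0$ for $n+l\leq m$ and an explicit formula \emph{only at the first level} $n+l=m+1$ above the vanishing threshold. For $Q\in Ker(\mu_{2k-2})$ and $i+j\geq 2k+2$, the pairing $\rho(Q)(\xi_p^i\odot\xi_p^j)$ involves the moment sums $\sum_{r,m}c_{rm}g_r^{(a)}(p)g_m^{(b)}(p)$ for all $2k+1\leq a+b\leq i+j$, and these are \emph{not} expressible as a constant $c_{ij}$ times a single Taylor coefficient of the section $\mu_{2k}(Q)$ at $p$: the derivative $(\mu_{2k}(Q))^{(s)}(p)$ is one particular linear combination of the moments of weight $2k+1+s$, while the space of such moments modulo the kernel relations is higher-dimensional. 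So the displayed identity $0=\sum_s B_s(\mu_{2k}(Q))^{(s)}(p)$ does not follow, and with it collapses the jet-functional argument of your second step. There is also an arithmetic slip there: you need $\lfloor n_2/2\rfloor\leq\lfloor(g-4)/6\rfloor$ to invoke Theorem \ref{unotheo} for all $k$, but the inequality goes the other way --- the paper computes $\lfloor n_2/2\rfloor=\lfloor\frac{g-4}{6}+\frac{r_1}{6}\rfloor\geq\lfloor\frac{g-4}{6}\rfloor$, with strict inequality in general (for the family \eqref{trigonal1} one gets $\lfloor(g-1)/3\rfloor$); for the larger values of $k$ you would have to switch to the bound $rank(\mu_{2k})\geq 2n_2-4k+1$, which does suffice numerically but is not what you invoked.

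The paper's proof of Theorem \ref{asytrig} shows the whole jet apparatus is avoidable by running the argument \emph{from the top}: take $k$ maximal with $a_k\neq 0$. Then every pairing occurring in $\rho(Q)(v\odot v)$ has $i+j\leq 2k$, so for a quadric $Q_{k-1}\in Ker(\mu_{2k-3})$ all terms with $i+j\leq 2k-1$ vanish automatically by Proposition \ref{remarkfrediani}, and the single surviving term is the diagonal one, $a_k^2\,\rho(Q_{k-1})(\xi_p^k\odot\xi_p^k)=a_k^2c_k\,\mu_{2k-1}(Q_{k-1})(p)$ --- only the \emph{value} of the next Gaussian section at $p$, never any higher jet. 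The strict chain $Ker(\mu_{2k-1})\subsetneq Ker(\mu_{2k-3})$, which supplies quadrics $Q_i\in Ker(\mu_{2i-1})\setminus Ker(\mu_{2i+1})$, is exactly the content of Theorem \ref{rank} (this is where the hypotheses $g\geq 16$ and $k\leq\lfloor n_2/2\rfloor$ enter), and ``general $p$'' means only that $p$ avoids the finitely many zero divisors $Z(\mu_{2i}(Q_i))$ of these fixed nonzero sections. With this ordering of the induction, your second step (surjectivity of jet evaluation away from inflection points) is not needed at all, and the unproved jet identity never arises.
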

    As a corollary, we obtained a bound for the dimension of a totally geodesic subvariety of $\mathcal{A}_g$ generically contained in the Torelli locus and passing through a cyclic trigonal curve. More precisely, we have the following result.
    \begin{theorem}(See Corollary \ref{dimensasy})
        Let $Y$ be a germ of a totally geodesic subvariety of $\mathcal{A}_g$ generically contained in $j(\mathcal{M}^0_g)$ and passing through $j(C)$, where $C$ is any cyclic trigonal curve of genus $g\geq 16$. Then we have
        $$dim(Y)=dim(W)\leq 3g-3-\lfloor\frac{n_2}{2}\rfloor.$$
    \end{theorem}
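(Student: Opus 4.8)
The plan is to reduce the statement to a dimension count for subspaces of $H^1(T_C)$, with Theorem \ref{asytrig} supplying the essential geometric input. First I would recall the standard characterization of the tangent space of a totally geodesic subvariety lying in the Torelli locus. Since $\mathcal{A}_g$ is a locally symmetric space and $Y$ is totally geodesic in it, the second fundamental form of $Y$ in $\mathcal{A}_g$ vanishes; writing the tangent space $W=T_{j(C)}Y$ and decomposing this form according to the exact sequence \eqref{tangent}, its component in $\mathcal{N}_{\mathcal{M}^0_g/\mathcal{A}_g}$ is precisely the restriction $\sigma|_{W\otimes W}$ of the second fundamental form of the Torelli map. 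Hence being totally geodesic forces $\sigma(\zeta,\zeta)=0$ for every $\zeta\in W$. Because $\rho$ is the dual of $\sigma$, the vanishing $\sigma(\zeta,\zeta)=0$ is equivalent, via $\rho(Q)(\zeta\odot\zeta)=\langle Q,\sigma(\zeta,\zeta)\rangle$, to $\rho(Q)(\zeta\odot\zeta)=0$ for all $Q\in I_2$, that is to $\zeta$ being asymptotic. Therefore $W\subseteq V$.

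Next I would fix a general point $p\in C$ and set $S_p:=\langle \xi_p^1,\dots,\xi_p^{\lfloor\frac{n_2}{2}\rfloor}\rangle\subseteq H^1(T_C)$. The higher Schiffer variations $\xi_p^1,\dots,\xi_p^{\lfloor\frac{n_2}{2}\rfloor}$ are linearly independent (part of the construction underlying Theorem \ref{asytrig}), so $\dim S_p=\lfloor\frac{n_2}{2}\rfloor$. By Theorem \ref{asytrig} one has $V\cap S_p=0$, and combining this with the inclusion $W\subseteq V$ established above gives $W\cap S_p=0$.

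Finally I would apply the Grassmann formula inside $H^1(T_C)$, which has dimension $3g-3$:
$$\dim W+\dim S_p=\dim(W\cap S_p)+\dim(W+S_p)\le 3g-3.$$
Since $\dim S_p=\lfloor\frac{n_2}{2}\rfloor$ and $\dim Y=\dim W$ by definition of the tangent space, this yields
$$\dim Y=\dim W\le 3g-3-\Big\lfloor\tfrac{n_2}{2}\Big\rfloor,$$
which is the claimed bound.

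The one genuinely delicate point is the first step, namely the passage from ``totally geodesic'' to ``$W$ consists of asymptotic directions'': it is here that the geometry of the second fundamental form and the symmetric structure of $\mathcal{A}_g$ enter, and I would be careful to state (and cite) precisely that the normal component of the second fundamental form of $Y$ equals $\sigma|_{W\otimes W}$, so that the totally geodesic hypothesis is exactly what forces $\sigma|_{W}=0$ and hence $W\subseteq V$. Everything downstream is linear algebra, with Theorem \ref{asytrig} doing the real work by excluding asymptotic directions from $S_p$; the dimension estimate is then immediate.
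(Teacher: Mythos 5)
Your proposal is correct and follows essentially the same route as the paper's proof of Corollary \ref{dimensasy}: the paper likewise invokes the fact that a tangent vector to a totally geodesic subvariety is asymptotic, applies Theorem \ref{asytrig} to conclude $W\cap\langle \xi_p^1,\dots,\xi_p^{\lfloor n_2/2\rfloor}\rangle=(0)$ for general $p$, and finishes with the same Grassmann dimension count in $H^1(T_C)$. The only difference is that you expand the first step (vanishing of the normal component of the second fundamental form and the duality with $\rho$) into an explicit argument, whereas the paper states it as a known fact.
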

	Then we focus on the study of asymptotic directions: in order to do that, we will consider Schiffer variations on the ramification points of the $g^1_3$. We will restrict our attention to cyclic trigonal curves. We manage to prove the following theorem, which is the other main result of this paper.
	\begin{theorem}(See Theorem \ref{theoxi1x4})
		Let $C$ be any trigonal curve of genus $g\geq 10$ which is a cyclic cover of $\mathbb{P}^1$. For every ramification point $p$ of $\pi$, the only asymptotic directions in $\langle \xi_p^1,\xi_p^2,\xi_p^3,\xi_p^4\rangle$ are in $\langle\xi_p\rangle$.
	\end{theorem}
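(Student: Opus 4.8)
The plan is to expand the asymptotic condition for a general $\zeta=\sum_{i=1}^{4}a_i\xi_p^i$ and reduce it to a polynomial system in $a_1,\dots,a_4$. Since $\rho$ is symmetric and bilinear, $\rho(Q)(\zeta\odot\zeta)=0$ for all $Q\in I_2(K_C)$ is equivalent to $\sum_{i,j}a_ia_j\,\rho(Q)(\xi_p^i\odot\xi_p^j)=0$. By the generalization of the Colombo--Pirola--Tortora formula (Frediani) together with the description of the higher Schiffer variations, $\rho(Q)(\xi_p^i\odot\xi_p^j)$ equals, up to a fixed nonzero constant, the coefficient $J_{i+j-2}(Q)$ of $t^{\,i+j-2}$ in the local expansion at $p$ of $\mu_2(Q)$, where $t$ is a uniformizer at $p$ (the case $i=j=1$ being the original formula $\rho(Q)(\xi_p\odot\xi_p)=-2\pi i\,\mu_2(Q)(p)$). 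Grouping by the jet order $m=i+j-2$, the condition becomes $\sum_{m=0}^{6}c_m\,J_m(Q)=0$ for all $Q$, with $c_m=\sum_{i+j=m+2}a_ia_j$; explicitly $c_0=a_1^2,\ c_1=2a_1a_2,\ c_2=a_2^2+2a_1a_3,\ c_3=2a_1a_4+2a_2a_3,\ c_4=a_3^2+2a_2a_4,\ c_5=2a_3a_4,\ c_6=a_4^2$.

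Next I would use the cyclic trigonal model $C:\,y^3=f(x)$ with its automorphism $y\mapsto\omega y$, $\omega$ a primitive cube root of unity. The spaces $H^0(mK_C)$ and $I_2(K_C)$ split into $\mathbb{Z}/3$-eigenspaces, and $\mu_2$ is equivariant, so it preserves the splitting; each eigenspace has a monomial basis whose weight is recorded by the exponent $c$ of $y^{-c}$, i.e. by $c\bmod 3$. At a ramification point $p$ the function $y$ is a uniformizer, while $x-x_0\sim t^3$ and $dx\sim t^2\,dt$; hence the local order at $p$ of every eigen-monomial of $K_C^{\otimes 4}$, and therefore of $\mu_2(Q)$ for $Q$ in a fixed eigenspace, lies in a single residue class modulo $3$ determined by the weight. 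Consequently $J_m(Q)$ can be nonzero only when $m$ lies in the residue class attached to the eigenspace of $Q$. In particular $J_0(Q)\equiv 0$ for all $Q$, which both recovers the known asymptoticity of $\xi_p$ and explains why $c_0$ plays no role; the surviving even orders $m=2,4,6$ sit in the three distinct classes $c\equiv 0,1,2\pmod 3$.

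The technical core is then to show that, for each class, the jet functionals are rich enough to isolate the single coefficient we want. Concretely I would prove that, once $g\ge 10$, the finitely many relevant jet functionals $J_m$ are linearly independent on each eigenspace of $I_2(K_C)$; equivalently there exist quadrics $Q\in I_2(K_C)$ with $J_2\ne 0$, $J_5=0$ (class $c\equiv 0$), with $J_4\ne 0$, $J_1=0$ (class $c\equiv 1$), and with $J_6\ne 0$, $J_3=0$ (class $c\equiv 2$). Substituting these three quadrics into $\sum_m c_mJ_m(Q)=0$ isolates the equations $c_2=c_4=c_6=0$, which cascade: $c_6=a_4^2$ forces $a_4=0$, then $c_4=a_3^2$ forces $a_3=0$, and finally $c_2=a_2^2$ forces $a_2=0$. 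Hence $\zeta=a_1\xi_p^1\in\langle\xi_p\rangle$, proving the statement.

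The main obstacle is exactly this last existence and independence statement: constructing quadrics in $I_2(K_C)$ whose second Gaussian image has prescribed leading and subleading jet behaviour at the ramification point. This rests on the explicit eigenspace description of $I_2(K_C)$ and of $\ker\mu_2$ obtained earlier in the paper, together with the combinatorics of orders of vanishing at $p$ governed by $y\sim t$, $x-x_0\sim t^3$, $dx\sim t^2\,dt$. The bound $g\ge 10$ enters precisely to keep the relevant eigenspaces large enough for the jet functionals of orders up to $6$ to be independent, so that each of $c_2,c_4,c_6$ can be detected separately.
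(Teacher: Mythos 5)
Your overall skeleton is the right one --- eigenspace decomposition of $I_2$, the mod-$3$ residue classes of vanishing orders at a total ramification point, and a cascade killing $a_4$, then $a_3$, then $a_2$ (this matches the paper, which uses one quadric from each eigenspace: $Q''_{11}\in\psi(W_1\otimes W_2)$ for $\xi_p^2$, $Q'_{12}\in\psi(\Lambda^2W_2)$ for $\xi_p^3$, $Q_{12}\in\psi(\Lambda^2W_1)$ for $\xi_p^4$). But there are two genuine gaps. The first is that your pivotal formula is false as stated: $\rho(Q)(\xi_p^i\odot\xi_p^j)$ is \emph{not} a fixed multiple of the coefficient $J_{i+j-2}$ of $t^{i+j-2}$ in $\mu_2(Q)$, i.e.\ it does not depend only on $i+j$. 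The actual tool (Proposition \ref{remarkfrediani}) is conditional: only if the mixed jets $\sum a_{ij}f_i^{(h)}(0)f_j^{(k)}(0)$ vanish for all $h+k\le m$ does one get $\rho(Q)(\xi_p^n\odot\xi_p^l)=0$ for $n+l\le m$, and at the first nonvanishing level $n+l=m+1$ the value is $2\pi i\sum_{k=0}^{n-1}A_k\frac{(n-k)}{k!(m+1-k)!}$ with $A_k=\sum a_{ij}f_i^{(m+1-k)}(0)f_j^{(k)}(0)$ --- a combination depending on the partition $(n,l)$, not just on $m+1$. For instance at level $4$, $(n,l)=(1,3)$ gives a multiple of $A_0$ while $(2,2)$ gives a multiple of $A_0/12+A_1/6$; even after imposing $A_k=A_{m+1-k}$ and the relation obtained by differentiating $\sum a_{ij}f_if_j\equiv 0$, these are independent functionals. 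Consequently your grouping $\rho(Q)(v\odot v)=\sum_m c_m J_m(Q)$ with $c_m=\sum_{i+j=m+2}a_ia_j$ is not available; in particular same-level cross terms such as $\rho(Q)(\xi_p^2\odot\xi_p^4)$, sitting at level $6$ together with $\rho(Q)(\xi_p^3\odot\xi_p^3)$, are not controlled by $c_4$ and $c_6$ bookkeeping, and the conditional hypothesis involves jets with $h=0$ (terms in $f_j(0)$) that are not even computable from the Taylor expansion of $\mu_2(Q)\propto\sum a_{ij}f_i'f_j'$.

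The second gap is that you defer exactly the step that constitutes the proof: you write that ``the technical core'' is to exhibit quadrics with the prescribed jet behaviour, but offer no construction. The paper's argument is precisely this construction: using $ord_p(\alpha_i)=3i-2$, $ord_p(\beta_j)=3j-3$ and the choice $s\in H^0(L(-3p))$, it verifies the conditional vanishing hypotheses for $h+l\le 3$, $\le 5$, $\le 7$ for $Q''_{11}$, $Q'_{12}$, $Q_{12}$ respectively (Proposition \ref{vanishing}, Theorems \ref{notas} and \ref{xi3xi4}), computes the single surviving top term (e.g.\ $\rho(Q'_{12})(\xi_p^3\odot\xi_p^3)=-\pi i\binom{6}{3}s^3(p)t(p)\beta_1(p)\beta_2^{(3)}(p)\neq 0$), and then runs the cascade in the correct order: if $a_k$ is the top nonzero coefficient of $v$, all $\rho(\xi_p^i\odot\xi_p^j)$ with $i+j\le 2k-1$ vanish and $\rho(Q)(v\odot v)=a_k^2\rho(Q)(\xi_p^k\odot\xi_p^k)\neq 0$. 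Killing $a_4$ before $a_3$ is what makes the troublesome cross term $\xi_p^2\odot\xi_p^4$ irrelevant, so the ordering is forced, not optional. Finally, the hypothesis $g\ge 10$ enters simply to guarantee $n_1,n_2\ge 2$, i.e.\ that $Q_{12}$ and $Q'_{12}$ exist --- not through any independence statement about jet functionals. As written, your proposal is a plan whose central identity is incorrect and whose essential computation is missing.
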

      Throughout this paper, in particular for the study of asymptotic directions, we will use many times a known result (\cite{Frediani2025}), which we state here for the reader's convenience. \\
      Let $\omega_1,...,\omega_g$ be a basis of $H^0(K_C)$, where locally $\omega_i=f_i(z)dz$. Let $Q\in I_2(K_C)$: then $Q=\sum\limits_{i,j=1}^ga_{ij}(\omega_i\otimes \omega_j)$ and $\sum\limits_{i,j=1}^ga_{ij}f_i(z)f_j(z)\equiv 0$.\\
    The result is the following (Remark 3.2 in \cite{Frediani2025}).
\begin{proposition}
\label{remarkfrediani}
    Let $Q\in I_2(K_C)$ with a local expression as above, such that 
    $$ \sum\limits_{i,j=1}^ga_{ij}f_i^{(h)}(0)f_j^{(k)}(0)= 0\quad \forall h+k\leq m,$$ then 
    $$ \rho(Q)(\xi_p^n\odot \xi_p^l)=0\quad if\quad n+l\leq m$$ and if $l+n=m+1$ we have 
    $$\rho(Q)(\xi_p^n \odot \xi_p^l)  =  2 \pi i  \left( \sum_{k=0}^{n-1} \left (  \sum^u_{i, j=1}a_{ij} f^{(m+1-k)}_i(0) f_j^{(k)}(0)\right)\frac{(n-k)}{k!(m+1-k)!}\right).$$
\end{proposition}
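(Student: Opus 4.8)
The plan is to reduce the statement to the explicit local behaviour of the second fundamental form on higher Schiffer variations. Set
$$A_{s,t} := \sum_{i,j=1}^g a_{ij}\, f_i^{(s)}(0)\, f_j^{(t)}(0),$$
so that $A_{s,t}=A_{t,s}$ since $a_{ij}=a_{ji}$, and the hypothesis reads $A_{h,k}=0$ for all $h+k\le m$. Differentiating the identity $\sum_{i,j}a_{ij}f_i(z)f_j(z)\equiv 0$ exactly $N$ times and evaluating at $0$ gives the Leibniz relations $\sum_{s=0}^N \binom{N}{s}A_{s,N-s}=0$ for every $N\ge 0$, which will be used to normalise coefficients. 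The core of the argument is a closed formula expressing $\rho(Q)(\xi_p^n\odot\xi_p^l)$ as a finite linear combination of the quantities $A_{s,t}$ with $s+t\le n+l$; granted this, both assertions are immediate.

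To produce such a formula I would follow the Dolbeault computation underlying the identity $\rho(Q)(\xi_p\odot\xi_p)=-2\pi i\,\mu_2(Q)(p)$ and push it to higher pole order. Represent $\xi_p^n\in H^1(T_C)$ by a $T_C$-valued $(0,1)$-form supported near $p$, of the shape $\bar\partial(\beta)\,z^{-n}\partial_z$ with $\beta$ a cutoff equal to $1$ near $p$, and similarly for $\xi_p^l$. Feeding these into $\sigma(X,Y)=\pi(\nabla_X Y)$ and pairing the outcome against $Q\in I_2(K_C)$, the resulting integral localises at $p$ and, by the residue theorem, collapses to a coefficient extraction in the variable $z$. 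Since the connection $\nabla$ differentiates the Hodge data, the two pole orders $n$ and $l$ get redistributed as derivative orders on the two factors $f_i,f_j$ of $Q$; after a Leibniz expansion the output is precisely a combination of the $A_{s,t}$ with $s+t\le n+l$ and explicit rational coefficients. This is the step that creates the constant $2\pi i$ and the factorials $\tfrac{1}{k!\,(m+1-k)!}$.

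With the closed formula in hand, both conclusions follow by inspection of degrees. If $n+l\le m$, every $A_{s,t}$ that occurs satisfies $s+t\le n+l\le m$ and hence vanishes by hypothesis, so $\rho(Q)(\xi_p^n\odot\xi_p^l)=0$. If instead $n+l=m+1$, the contributions with $s+t\le m$ again vanish and only the top-degree terms with $s+t=m+1$ remain; writing $t=k$, $s=m+1-k$ and reading off the surviving coefficient $\tfrac{n-k}{k!\,(m+1-k)!}$ for $k=0,\dots,n-1$ yields
$$\rho(Q)(\xi_p^n\odot\xi_p^l)=2\pi i\left(\sum_{k=0}^{n-1}A_{m+1-k,\,k}\,\frac{n-k}{k!\,(m+1-k)!}\right),$$
which is the claimed identity. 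The symmetry of the left-hand side under $n\leftrightarrow l$, not visible on the right, is recovered through the single Leibniz relation in degree $m+1$; this same relation is what normalises the raw residue output into the stated form.

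The \emph{main obstacle} is the middle step: establishing the closed formula with the correct combinatorial coefficients. Making the Dolbeault representatives, the covariant derivative $\nabla$ and the normal projection $\pi$ interact so as to produce a clean residue — and in particular tracking the factor $2\pi i$ together with the asymmetry between $n$ and $l$ (the sum stops at $n-1$) — is the delicate point, whereas the specialisation to the two cases is purely formal. A shortcut is to invoke Frediani's general evaluation of $\rho$ on higher Schiffer variations (\cite{Frediani2025}) verbatim and merely impose the vanishing hypothesis, in which case the whole proof reduces to the degree bookkeeping of the previous paragraph.
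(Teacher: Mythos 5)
The paper does not prove this proposition at all: it is quoted verbatim, ``for the reader's convenience'', as Remark 3.2 of \cite{Frediani2025}, so there is no internal proof to compare against. The ``shortcut'' you offer in your final paragraph --- invoke Frediani's evaluation verbatim and do the degree bookkeeping --- is therefore exactly the paper's own treatment, and that part of your write-up (the specialisation to $n+l\le m$ and $n+l=m+1$, and the remark that the apparent asymmetry in $n,l$ is resolved by the degree-$(m+1)$ Leibniz relation $\sum_{s}\binom{m+1}{s}A_{s,m+1-s}=0$) is correct and matches how the statement is used in the rest of the paper.

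As a standalone proof, however, your middle step is not just ``delicate'', it is mis-organised in a way that would make the plan fail. You propose to first establish an \emph{unconditional} closed formula expressing $\rho(Q)(\xi_p^n\odot\xi_p^l)$ as a universal combination of the jets $A_{s,t}$ with $s+t\le n+l$, and to impose the vanishing hypothesis only afterwards. No such unconditional formula exists: evaluating $\sigma(X,Y)=\pi(\nabla_X Y)$ on Dolbeault representatives forces you to solve a $\overline{\partial}$-equation, whose solution is determined only up to its harmonic part, and that harmonic correction contributes \emph{global} data --- already in the Colombo--Pirola--Tortora computation the mixed values $\rho(Q)(\xi_p\odot\xi_q)$ involve expansion coefficients of differentials of the second kind (Bergman-kernel--type data), not jets of the $\omega_i$ at the points. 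The hypothesis $\sum_{i,j}a_{ij}f_i^{(h)}(0)f_j^{(k)}(0)=0$ for $h+k\le m$ is precisely what kills these non-local contributions: it makes the relevant cup products vanish so that a local meromorphic primitive can be chosen and the integral genuinely collapses to a residue at $p$. It must therefore be fed into the residue computation itself, not used only in the final inspection of degrees; the truncation of the sum at $k=n-1$, the factor $2\pi i$, and the coefficients $\tfrac{n-k}{k!\,(m+1-k)!}$ all come out of that \emph{conditional} computation, which your sketch never performs. (The familiar identity $\rho(Q)(\xi_p\odot\xi_p)=-2\pi i\,\mu_2(Q)(p)$ only looks unconditional because every $Q\in I_2(K_C)$ automatically satisfies the hypothesis with $m=1$.)
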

     Moreover, we managed to find the explicit descriptions of $Ker(\mu_2)$ for a particular cyclic trigonal curve (see Theorem \ref{mu2tensor}).
	\section{Gaussian maps on cyclic trigonal curves}
	As we said, here we will consider some particular class of trigonal curves, namely the ones that are cyclic coverings of $\mathbb{P}^1$. These curves have affine equation 
	\begin{equation}
		y^3=\prod_{i=1}^r (x-t_i)^{a_i}
	\end{equation}
	where $a_i=1,2$, $\sum_{i=1}^r a_i\equiv 0$(mod $3$) and the $t_i$'s are distinct roots. We will assume $t_1=0$. Clearly, for these curves we have a $3:1$ map $\pi:C\rightarrow \mathbb{P}^1$ where $\mathbb{P}^1=C/\mathbb{Z}/3\mathbb{Z}$.\\
	Recall that every ramification point of $\pi$ is total, hence by Hurwitz formula we get $g(C)=r-2$. We will consider every possible kind of families, which have the following equation:
    \begin{equation}
    \label{cyctrigonal}
        y^3=\prod_{i=1}^{r_1}(x-t_i)\prod_{i=r_1+1}^{r_1+r_2}(x-t_i)^2
    \end{equation}
    where $r_1+r_2=r$. An interesting particular case is when $r_1=r$ and $r_2=0$, and we have 
	\begin{equation}
		\label{trigonal1}
		y^3=\prod_{i=1}^r (x-t_i).
	\end{equation}
	Let us denote by $L$ the $g_3^1$ and $M=K_C\bigotimes L^{\vee}$. Moreover, let $H^0(L)=\langle s,t \rangle$ and $H^0(M)=\langle \omega_1,...,\omega_{g-2} \rangle$. For the moment, we will focus on the family of equation \eqref{trigonal1}. We know that $H^0(M)=W_1\bigoplus W_2$, where $$W_1=\langle \alpha_1=x\frac{dx}{y},\alpha_2=x^2\frac{dx}{y},...,\alpha_{n_1}=x^{n_1}\frac{dx}{y}\rangle$$ and 
	$$
	W_2=\langle \beta_1=x\frac{dx}{y^2},\beta_2=x^2\frac{dx}{y^2},...,\beta_{n_2}=x^{n_2}\frac{dx}{y^2}\rangle
	$$
	where $n_1=\frac{r-6}{3}, n_2=\frac{2r-6}{3}$ (see for instance \cite{moonen}). \\
	In this section we will compute the equations of the kernel of even order higher gaussian maps  and their rank. More precisely, let $k\geq 0$ be an integer and let us consider the even order higher Gaussian map of the canonical bundle $\mu_{2k}: Ker (\mu_{2k-2})\rightarrow H^0(K_C^{\bigotimes 2k+2})$. Following the ideas of \cite{faro2025highergaussianmapshyperelliptic} we can compute the rank of a suitable restriction of $\mu_{2k}$. Moreover, we will manage to give a complete description of the equations of $Ker(\mu_2)$: we mention that the computation of the rank of $\mu_2$ was already known by a result of Colombo-Frediani (\cite{10.1307/mmj/1260475698}).\\
	We begin with some remarks.
	\begin{remark}
		From  (\cite{faro2025highergaussianmapshyperelliptic}, Lemma 3.5), it follows that $rk(\mu_{2k})=rk(\mu_{2k-1,M})$, because $\mu_{1,L}(s \wedge t)$ is a nonzero section in $H^0(K_C\otimes L^2)$.
	\end{remark}
	\begin{remark}
		Let $\mathbb{Z}/3\mathbb{Z}=\langle \zeta\rangle.$
		Since $\mu_{1,M}:\Lambda^2W_1\bigoplus \Lambda^2W_2\bigoplus W_1\otimes W_2\rightarrow H^0(K_C^3\otimes L^{-2})$ is equivariant (with respect to the action of $\mathbb{Z}/3\mathbb{Z}$), we have the following:
		\begin{equation*}
			\mu_{1,M|\Lambda^2 W_1}:\Lambda^2 W_1\rightarrow H^0(K_C^3\otimes L^{-2})^2
		\end{equation*}
		\begin{equation*}
			\mu_{1,M|\Lambda^2 W_2}:\Lambda^2 W_2\rightarrow H^0(K_C^3\otimes L^{-2})^1
		\end{equation*}
		\begin{equation*}
			\mu_{1,M|W_1\otimes W_2}:W_1 \otimes W_2\rightarrow H^0(K_C^3\otimes L^{-2})^0
		\end{equation*}
		where by $H^0(K_C\otimes L^2)^i$ we mean the invariant part with respect to the multiplication by $\zeta^i$. Clearly, by restriction the same splitting holds for $\mu_{2k-1}$ for every $k\geq 0$.
	\end{remark}
	Our goal is now to compute $rk(\mu_{2k-1\mid \Lambda^2 W_i})$. First, we recall the following classical result (see for instance \cite{Andreotti1967}).
    \begin{lemma}
        \label{lemmaiso}
        Let $C$ be a trigonal curve of genus $g \geq 4$. Let $|L|$ be the $g^1_3$. Set $M=K_C \otimes L^{\vee}$, let $\omega_1,...,\omega_{g-2}$ be a basis for $H^0(M)$ and let $\langle s,t \rangle$ be a basis for $H^0(L)$.  Then the map defined by 
\begin{align}
\Lambda^2H^0(M) &\xrightarrow{\psi} I_2 \nonumber
\\ \nonumber 
\omega_i \wedge \omega_j &\ra Q_{ij}:=s\omega_i \odot t\omega_j - s\omega_j \odot t\omega_i,\nonumber 
\end{align} 
is an isomorphism. In particular, observe that $\{Q_{ij}\}$ gives a basis for $I_2$.
    \end{lemma}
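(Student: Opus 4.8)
The plan is to prove that $\psi$ is an isomorphism in three steps: check that it is well defined (lands in $I_2$), match dimensions, and then establish injectivity, which I expect to be the only genuine difficulty. For well-definedness I would simply apply the multiplication map $m:\Sym^2H^0(K_C)\to H^0(K_C^{\otimes 2})$ to $Q_{ij}$. Since $s\omega_i\in H^0(L\otimes M)=H^0(K_C)$ and likewise for the other factors, one gets $m(Q_{ij})=(s\omega_i)(t\omega_j)-(s\omega_j)(t\omega_i)=st\,\omega_i\omega_j-st\,\omega_i\omega_j=0$, so $Q_{ij}\in I_2$ and $\psi$ is a well-defined linear map.

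For the dimension count, both sides turn out to have dimension $\binom{g-2}{2}$. On the source, $\dim\Lambda^2H^0(M)=\binom{g-2}{2}$ directly, since $h^0(M)=g-2$. On the target, $C$ is non-hyperelliptic (being trigonal), so Max Noether's theorem makes $m$ surjective, whence $\dim I_2=\binom{g+1}{2}-(3g-3)=\binom{g-2}{2}$. Consequently $\psi$ is an isomorphism as soon as it is shown to be injective (equivalently surjective), and I would aim for injectivity.

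For injectivity I would set $A=H^0(L)=\langle s,t\rangle$, $B=H^0(M)$, and $V=A\otimes B$. The base-point-free pencil trick gives a surjection $V\to H^0(K_C)$ with kernel $R\cong H^0(M\otimes L^{\vee})$, explicitly $R=\{\,s\otimes(t\eta)-t\otimes(s\eta):\eta\in H^0(M\otimes L^{\vee})\,\}$. Passing to symmetric squares, $\Sym^2V=(\Sym^2A\otimes\Sym^2B)\oplus(\Lambda^2A\otimes\Lambda^2B)$, and $\ker(\Sym^2V\twoheadrightarrow\Sym^2H^0(K_C))=R\odot V$. A short computation shows that the natural lift $\widetilde Q_{ij}=(s\otimes\omega_i)\odot(t\otimes\omega_j)-(s\otimes\omega_j)\odot(t\otimes\omega_i)$ has vanishing $\Sym^2A\otimes\Sym^2B$ component and skew component equal to $2(s\wedge t)\otimes(\omega_i\wedge\omega_j)$. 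Thus, up to the factor $2$, $\psi$ factors as $\Lambda^2H^0(M)\cong\Lambda^2A\otimes\Lambda^2B\hookrightarrow\Sym^2V\twoheadrightarrow\Sym^2H^0(K_C)$, and injectivity of $\psi$ is equivalent to the trivial-intersection statement $(R\odot V)\cap(\Lambda^2A\otimes\Lambda^2B)=0$.

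Proving this trivial intersection is the hard part. Expanding a general element of $R\odot V$ into the four blocks indexed by $s^2,\ s\odot t,\ t^2$ (symmetric) and $s\wedge t$ (skew), and using the explicit form of $R$, the vanishing of the three symmetric blocks yields relations in $\Sym^2B$ among the products $(s\eta_k)\odot p_k$, $(t\eta_k)\odot q_k$, $(s\eta_k)\odot q_k$, $(t\eta_k)\odot p_k$. I expect that pure multilinear algebra will \emph{not} force the skew block to vanish: one must exploit that these elements genuinely multiply on $C$. The missing relations should come from the multiplication maps on sections of $M$ and $M\otimes L^{\vee}$, i.e.\ ultimately from the injectivity of the relevant $\Sym^2B\to H^0(M^{\otimes 2})$ (equivalently, the vanishing of a Koszul cohomology group in this range, which holds for a trigonal curve of genus $g\ge 4$); feeding these in should force the skew block to be zero, hence $(R\odot V)\cap(\Lambda^2A\otimes\Lambda^2B)=0$. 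Once this is in place, $\psi$ is injective, and together with the equality of dimensions from Step~2 we conclude that $\psi$ is an isomorphism, so that $\{Q_{ij}\}$ is a basis of $I_2$.
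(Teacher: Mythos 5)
Your first two steps are correct: $m(Q_{ij})=st\,\omega_i\omega_j-st\,\omega_j\omega_i=0$ shows $\psi$ lands in $I_2$, and since a trigonal curve of genus $g\geq 4$ is non-hyperelliptic, Max Noether gives $\dim I_2=\binom{g+1}{2}-(3g-3)=\binom{g-2}{2}=\dim\Lambda^2H^0(M)$. The reduction of injectivity, via the base-point-free pencil trick and the splitting $Sym^2(A\otimes B)=(Sym^2A\otimes Sym^2B)\oplus(\Lambda^2A\otimes\Lambda^2B)$, to the statement $(R\odot V)\cap(\Lambda^2A\otimes\Lambda^2B)=0$ is also sound (one quibble: $V\to H^0(K_C)$ need not be surjective, e.g. for $g=4$ with $K_C=2L$, but nothing uses this, since $Sym^2$ of the injection $V/R\hookrightarrow H^0(K_C)$ is injective). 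The problem is that this trivial intersection \emph{is} the lemma, and your final paragraph does not prove it: it is a sequence of expectations ("I expect", "should force"), not an argument. Worse, the one concrete ingredient you name, injectivity of $Sym^2H^0(M)\to H^0(M^{\otimes 2})$, is false as soon as $g\geq 6$: since $\deg M^{\otimes 2}=4g-10>2g-2$ we get $h^0(M^{\otimes 2})=3g-9$, while $\dim Sym^2H^0(M)=\binom{g-1}{2}>3g-9$ for $g\geq 6$. The kernel is visible on the very curves \eqref{trigonal1} of this paper: $\alpha_i\beta_j=x^{i+j}(dx)^{\otimes 2}/y^3$ depends only on $i+j$, so for instance $\alpha_1\odot\beta_2-\alpha_2\odot\beta_1$ maps to zero. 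So the proposal is incomplete precisely at its critical point, and the suggested repair does not exist in the form stated.

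For comparison: the paper offers no proof at all, quoting the lemma as classical with a reference to Andreotti--Mayer, so there is no internal argument to match; but the classical way to close your gap is short and avoids the Koszul bookkeeping entirely. Regard $Sym^2H^0(K_C)\subset H^0(K_C)\otimes H^0(K_C)=H^0(K_C\boxtimes K_C)$ on $C\times C$, and extend the coefficients skew-symmetrically ($a_{ji}=-a_{ij}$). For $\lambda=\sum_{i<j}a_{ij}\,\omega_i\wedge\omega_j$ one computes at $(p,q)\in C\times C$
\[
\psi(\lambda)(p,q)=\tfrac12\bigl(s(p)t(q)-t(p)s(q)\bigr)\,\Omega(p,q),\qquad \Omega:=\sum_{i,j}a_{ij}\,\omega_i\boxtimes\omega_j\in H^0(M\boxtimes M).
\]
The section $s\boxtimes t-t\boxtimes s$ of $L\boxtimes L$ is nonzero (it vanishes only where $p$ and $q$ lie in a common fiber of the $g^1_3$; it cannot vanish identically, as $s/t$ is nonconstant), so on the integral variety $C\times C$ the vanishing $\psi(\lambda)=0$ forces $\Omega=0$; by the K\"unneth decomposition $H^0(M\boxtimes M)=H^0(M)\otimes H^0(M)$ and the linear independence of the $\omega_i\otimes\omega_j$, all $a_{ij}=0$. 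This proves injectivity directly, and with your dimension count yields the lemma. Your statement $(R\odot V)\cap(\Lambda^2A\otimes\Lambda^2B)=0$ is true, but any honest proof of it requires geometric input of this kind, not the multiplication map you propose.
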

	\begin{remark}
		\label{qij}
		Let $Q\in I_2$. From Lemma \ref{lemmaiso}, recalling that $\Lambda^2H^0(M)=\Lambda^2W_1\bigoplus \Lambda^2W_2\bigoplus W_1\otimes W_2$, we have $$Q=\sum\limits_{1\leq i<j\leq n_1} a_{ij}Q_{ij}+\sum\limits_{1\leq i<j\leq n_2} a'_{ij}Q'_{ij}+\sum \limits_{\substack{1 \leq i\leq n_1  \\ 1\leq j \leq n_2}} a''_{ij}Q''_{ij},$$ where $$Q_{ij}:=s\alpha_i \odot t\alpha_j - s\alpha_j \odot t\alpha_i,$$ $$Q'_{ij}:=s\beta_i \odot t\beta_j - s\beta_j \odot t\beta_i,$$ $$Q''_{ij}:=s\alpha_i \odot t\beta_j - t\alpha_j \odot s\beta_i.$$
	\end{remark}
    The first result is the following.
	\begin{proposition}
		\label{eqmu2k-1}
		Let $C$ be a trigonal curve of genus $g\geq 4$ as in \eqref{trigonal1} and let $\psi$ be the isomorphism in Lemma \ref{lemmaiso}.\\ Let $Q=\sum\limits_{1\leq i<j\leq n_1} a_{ij}Q_{ij}\in I_2$. For any $k \geq 2$,  $\psi ^{-1}(Q) \in Ker(\mu_{2k-1\mid \Lambda^2 W_1})$ if and only if 
		\begin{itemize}
			\item $\forall \  3\leq l \leq 2n_1-1$,
			$$
			\sum \limits_{\substack{1 \leq i < j \\ i+j=l}} a_{ij} (j-i)=0,
			$$
			\item  $\forall \  2 \leq m \leq k$, $\forall \  2m-1 \leq l \leq 2n_1-1$,
			\begin{equation}
				\label{equationsmuk-1}
				\sum \limits_{\substack{1 \leq i < j \\ i+j=l \\ i \geq m-1, \\ j \geq m-1 }} a_{ij} (j-i) ij(i-1)(j-1)...(i-(m-2))(j-(m-2))=0.
			\end{equation}
		\end{itemize}
		Analogously, let $Q=\sum\limits_{1\leq i<j\leq n_2} a'_{ij}Q'_{ij}\in I_2$. Then $\psi ^{-1}(Q) \in Ker(\mu_{2k-1\mid \Lambda^2 W_2})$ if and only if 
		\begin{itemize}
			\item $\forall \  3\leq l \leq 2n_2-1$,
			$$
			\sum \limits_{\substack{1 \leq i < j \\ i+j=l}} a'_{ij} (j-i)=0,
			$$
			\item  $\forall \  2 \leq m \leq k$, $\forall \  2m-1 \leq l \leq 2n_2-1$,
			\begin{equation}
				\sum \limits_{\substack{1 \leq i < j \\ i+j=l \\ i \geq m-1, \\ j \geq m-1 }} a'_{ij} (j-i) ij(i-1)(j-1)...(i-(m-2))(j-(m-2))=0.
			\end{equation}
		\end{itemize}
	\end{proposition}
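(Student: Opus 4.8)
The plan is to compute the restriction $\mu_{2k-1,M\mid\Lambda^2 W_1}$ in an explicit local frame and then to match the resulting linear conditions with those in the statement by a combinatorial identity. By Lemma \ref{lemmaiso} and Remark \ref{qij} we have $\psi^{-1}(Q)=\sum_{1\le i<j\le n_1}a_{ij}\,\alpha_i\wedge\alpha_j\in\Lambda^2 W_1$. On the dense open set $U\subset C$ where $\pi$ is unramified and $y\neq 0,\infty$, the function $x$ is a local coordinate and $\eta=\tfrac{dx}{y}$ trivialises $M$; with respect to $(x,\eta)$ the section $\alpha_i$ is simply the function $x^i$. Recalling the jet description of the Gaussian maps --- if $\omega_a=f_a\,\eta$ locally, then on $\ker(\mu_{m-1})$ the $m$-th map sends $\sum_{a<b}c_{ab}\,\omega_a\wedge\omega_b$ to $\tfrac{1}{m!}\sum_{a<b}c_{ab}(f_af_b^{(m)}-f_bf_a^{(m)})$, a section of $K_C^{\otimes m}\otimes M^{\otimes 2}$ --- I would compute, using $(x^i)^{(m)}=m!\binom{i}{m}x^{i-m}$, that $\mu_m(\psi^{-1}(Q))$ is represented on $U$ by the polynomial
\[
c_m(x)=\sum_{1\le i<j\le n_1}a_{ij}\Big(\binom{j}{m}-\binom{i}{m}\Big)x^{i+j-m}.
\]
Since a global section vanishes as soon as it vanishes on $U$, one has $\mu_m(\psi^{-1}(Q))=0$ if and only if $c_m\equiv 0$.

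Next I would isolate the parity phenomenon that singles out the odd-order maps. Grouping by $l=i+j$ and setting $d=j-i>0$ (so $i=\tfrac{l-d}{2}$, $j=\tfrac{l+d}{2}$), the coefficient of $x^{l-m}$ in $c_m$ is the pairing of $(a_{ij})_{i+j=l}$ with $\binom{j}{m}-\binom{i}{m}$, which is an \emph{odd} polynomial in $d$ (swapping $i\leftrightarrow j$ reverses its sign) of degree exactly $m$ when $m$ is odd and of degree $\le m-1$ when $m$ is even. Consequently each coefficient of an even $c_{2m}$ is an odd polynomial in $d$ of degree $\le 2m-1$, hence lies in the span of the coefficients of $c_1,c_3,\dots,c_{2m-1}$; in particular $c_{2m}\equiv 0$ once $c_1\equiv c_3\equiv\cdots\equiv c_{2m-1}\equiv 0$. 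Thus the nested kernel is cut out purely by the odd maps: $\psi^{-1}(Q)\in\ker(\mu_{2k-1,M\mid\Lambda^2 W_1})$ if and only if $c_1\equiv c_3\equiv\cdots\equiv c_{2k-1}\equiv 0$, i.e. for each fixed $l$ the tuple $(a_{ij})_{i+j=l}$ annihilates the space of odd polynomials in $d$ of degree $\le 2k-1$. The case $m=1$ is the coefficientwise condition $\sum_{i+j=l}a_{ij}(j-i)=0$, which is the first displayed bullet, with $l$ ranging over $3\le l\le 2n_1-1$ since these are exactly the values admitting a pair $1\le i<j\le n_1$.

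Finally I would match the remaining conditions with \eqref{equationsmuk-1} by induction on $k$. The functional in \eqref{equationsmuk-1} pairs $(a_{ij})_{i+j=l}$ with $ij(i-1)(j-1)\cdots(i-(m-2))(j-(m-2))\,(j-i)=(m-1)!^2\binom{i}{m-1}\binom{j}{m-1}(j-i)$, which is again an odd polynomial in $d$, of degree exactly $2m-1$ with nonzero leading coefficient; the side conditions $i,j\ge m-1$ merely record that the binomials vanish otherwise, and the range $2m-1\le l\le 2n_1-1$ records the values of $l$ admitting such a pair. Now the space of odd polynomials in $d$ of degree $\le 2m-1$ modulo those of degree $\le 2m-3$ is one-dimensional, spanned by $d^{2m-1}$, and both the functional attached to $c_{2m-1}$ (namely $\binom{j}{2m-1}-\binom{i}{2m-1}$) and that attached to \eqref{equationsmuk-1} have nonzero image there. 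Hence, under the inductive hypothesis that $(a_{ij})_{i+j=l}$ already annihilates odd polynomials of degree $\le 2m-3$ (equivalently, that the conditions for $\mu_1,\dots,\mu_{2m-3}$ hold), the vanishing of $c_{2m-1}$ is equivalent to \eqref{equationsmuk-1}. Running $m$ from $2$ to $k$ yields the stated equivalence; the argument for $\Lambda^2 W_2$ is verbatim the same with $\beta_i=x^i\tfrac{dx}{y^2}$ and frame $\tfrac{dx}{y^2}$ in place of $\alpha_i$ and $\tfrac{dx}{y}$, producing the primed equations.

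The main obstacle is this last step: showing that the genuine Gaussian conditions $c_{2m-1}\equiv 0$ and the packaged conditions \eqref{equationsmuk-1} cut out the same locus, even though the two families of linear functionals are visibly different. The clean way through is the reparametrisation $d=j-i$ at fixed $l=i+j$, which converts every functional into integration of $(a_{ij})$ against a polynomial in the single variable $d$ and reduces the whole comparison to degrees and leading coefficients of odd polynomials; what then remains is purely the bookkeeping of the admissible ranges of $l$ and of the side conditions $i,j\ge m-1$.
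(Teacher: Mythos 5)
Your proof is correct, and it is genuinely more self-contained than the paper's own argument. The paper proves this proposition purely by reduction: it observes that in the frame $\tfrac{dx}{y}$ (resp.\ $\tfrac{dx}{y^2}$) the space $W_1$ (resp.\ $W_2$) is spanned by the monomials $x,\dots,x^{n_i}$, so that the computation of (\cite{faro2025highergaussianmapshyperelliptic}, Lemma 3.7) for hyperelliptic curves applies verbatim with $W_i$ playing the role of $H^0(M)$, and no further derivation is given. You start from the same reduction (monomial local expressions on the unramified locus, which suffices because the Gaussian maps are local operators and a section vanishing on a dense open set vanishes), but you then actually rederive the equations, and your packaging is cleaner than a direct computation: fixing $l=i+j$ and reparametrizing by $d=j-i$ turns each jet condition into the pairing of $(a_{ij})_{i+j=l}$ against an odd polynomial in $d$; since the odd-order functionals $\binom{j}{m}-\binom{i}{m}$ have degree exactly $m$ for $m$ odd while the even-order ones have degree $\le m-1$, the even conditions are automatically in the span of the odd ones, and since the packaged functionals $(m-1)!^2\binom{i}{m-1}\binom{j}{m-1}(j-i)$ of \eqref{equationsmuk-1} likewise have degrees exactly $1,3,\dots,2k-1$, both families are triangular bases of the odd polynomials of degree $\le 2k-1$ and hence cut out the same locus. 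This triangularity argument replaces the explicit row-reduction induction that the cited hyperelliptic lemma performs, and it transparently explains both why the even-order conditions are redundant on $\Lambda^2 W_i$ and why the side constraints $i,j\ge m-1$ and the range $2m-1\le l\le 2n_i-1$ cost nothing (the excluded terms vanish identically or index empty sums). The one ingredient you assume without proof --- the local jet description of $\mu_m$ on $\ker(\mu_{m-1})$, up to a harmless normalization --- is exactly what the paper also imports from \cite{faro2025highergaussianmapshyperelliptic} (compare Proposition \ref{remarkfrediani}), so nothing essential is missing from your argument.
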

	\begin{proof}
		The proof goes exactly as in (\cite{faro2025highergaussianmapshyperelliptic}, Lemma 3.7). In fact, since $\Lambda^2H^0(M)=\Lambda^2W_1\bigoplus \Lambda^2W_2\bigoplus W_1\otimes W_2$ and we are only considering the restrictions of $\mu_{2k-1}$ to $\Lambda^2 W_i$, the role of $H^0(M)$ in (\cite{faro2025highergaussianmapshyperelliptic}, Lemma 3.7) is taken by $W_i$ for $i=1,2$ in our context. The computations are exactly the same. 
	\end{proof}
    Hence we obtain the analogous of (\cite{faro2025highergaussianmapshyperelliptic}, Theorem 3.1) for these trigonal curves.
    \begin{theorem}
    \label{rango12}
        Let $C$ be a cyclic trigonal curve of genus $g\geq 16$ as in \eqref{trigonal1}. Then for every $2\leq k\leq \lfloor\frac{n_i}{2}\rfloor$
		\begin{equation}
			rk(\mu_{2k-1\mid \Lambda^2 W_i})=2n_i-4k+1
		\end{equation}
		\begin{equation}
			dim(Ker(\mu_{2k-1\mid \Lambda^2 W_i}))=\frac{n_i(n_i-1)}{2}-k(2n_i-2k-1)
		\end{equation}
		Moreover, if $k>\lfloor\frac{n_i}{2}\rfloor$, then $rk(\mu_{2k-1\mid \Lambda^2 W_i})=0$.
	\end{theorem}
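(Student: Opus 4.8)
The plan is to read off $Ker(\mu_{2k-1\mid\Lambda^2 W_i})$ directly from the linear equations of Proposition \ref{eqmu2k-1} and to recover the rank from the nullity, exactly in the spirit of (\cite{faro2025highergaussianmapshyperelliptic}, Theorem 3.1). Since the domain of $\mu_{2k-1}$ is $Ker(\mu_{2k-3})$ and the equivariant splitting recalled above restricts this decomposition to each summand $\Lambda^2 W_i$, the map $\mu_{2k-1\mid\Lambda^2 W_i}$ has domain $Ker(\mu_{2k-3\mid\Lambda^2 W_i})$ and therefore
$$rk(\mu_{2k-1\mid\Lambda^2 W_i}) = \dim Ker(\mu_{2k-3\mid\Lambda^2 W_i}) - \dim Ker(\mu_{2k-1\mid\Lambda^2 W_i}).$$
By Proposition \ref{eqmu2k-1} the space $Ker(\mu_{2k-1\mid\Lambda^2 W_i})$ is cut out inside $\Lambda^2 W_i$ by the equations indexed by all $1\leq m\leq k$; writing $R_k$ for the number of independent such equations one gets $\dim Ker(\mu_{2k-1\mid\Lambda^2 W_i})=\binom{n_i}{2}-R_k$ and hence $rk(\mu_{2k-1\mid\Lambda^2 W_i})=R_k-R_{k-1}$. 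Everything therefore reduces to proving $R_k=k(2n_i-2k-1)$, since both displayed formulas follow by substitution (the computation $R_k-R_{k-1}=2n_i-4k+1$ is then immediate).

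First I would observe that the system decouples according to the level $l=i+j$: the equation indexed by $(m,l)$ involves only the unknowns with $i+j=l$, and different levels use disjoint unknowns, so $R_k=\sum_l r_k(l)$ with $r_k(l)$ the rank of the level-$l$ block. At level $l$ I would reparametrize the unknowns by the gap $\delta=j-i$, which runs over $N(l):=\#\{(i,j):i+j=l,\ 1\leq i<j\leq n_i\}$ distinct positive values. Substituting $i=(l-\delta)/2$, $j=(l+\delta)/2$, the coefficient $(j-i)\,ij(i-1)(j-1)\cdots(i-(m-2))(j-(m-2))$ of the $m$-th equation becomes $\delta\cdot P_m(\delta^2)$ with
$$P_m(\delta^2)=\frac{1}{4^{\,m-1}}\prod_{a=0}^{m-2}\big[(l-2a)^2-\delta^2\big],$$
a polynomial of degree exactly $m-1$ in $\delta^2$ with nonzero leading coefficient. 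Factoring the nonzero $\delta$ out of each column and noting that $P_1,\dots,P_{M(l)}$ have distinct degrees (so they form a basis of the polynomials of degree $<M(l)$, their forced zeros falling precisely on the inactive columns $i\leq m-2$), a generalized Vandermonde argument on the distinct evaluation points $\{\delta^2\}$ gives $r_k(l)=\min\big(M(l),N(l)\big)$, where $M(l)=\min\big(k,\lceil l/2\rceil\big)$ is the number of nontrivial equations at that level.

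It then remains to evaluate the sum. Since $N(l)\leq\lceil l/2\rceil$ for every admissible $l$, the level term simplifies to $\min(M(l),N(l))=\min(k,N(l))$, so $R_k=\sum_{l=3}^{2n_i-1}\min(k,N(l))$. Using $N(l)=\lceil l/2\rceil-1$ for $l\leq n_i+1$ and $N(l)=n_i-\lfloor l/2\rfloor$ for $l\geq n_i+1$, the tent-shaped sequence $N(l)$ satisfies $N(l)>k$ exactly on the middle range $2k+3\leq l\leq 2(n_i-k)-1$, where the summand equals $k$, and equals $N(l)$ on the two outer ranges. Each outer range contributes $2(1+\cdots+k)=k(k+1)$, while the middle range of $2n_i-4k-3$ terms contributes $k(2n_i-4k-3)$, so that
$$R_k=2k(k+1)+k(2n_i-4k-3)=k(2n_i-2k-1),$$
as required.

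Finally, I would treat the edge case $k=\lfloor n_i/2\rfloor$, where the two outer ranges meet, directly: there $M(l)\geq N(l)$ for all $l$, so $r_k(l)=N(l)$ and $R_k=\binom{n_i}{2}$, which coincides with $k(2n_i-2k-1)$ because $\binom{n_i}{2}=\lfloor n_i/2\rfloor\,(2n_i-2\lfloor n_i/2\rfloor-1)$. In particular $Ker(\mu_{2k-1\mid\Lambda^2 W_i})=0$ at $k=\lfloor n_i/2\rfloor$, so for $k>\lfloor n_i/2\rfloor$ the domain $Ker(\mu_{2k-3\mid\Lambda^2 W_i})$ is already trivial and $rk(\mu_{2k-1\mid\Lambda^2 W_i})=0$, giving the last assertion. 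The main obstacle is the per-level rank computation: one must verify that the coefficients really assemble into polynomials in $\delta^2$ of the correct degree with nonvanishing leading term, and that their automatic vanishing on the inactive columns does not spoil the Vandermonde nondegeneracy — this is the single point where the argument would fail if the $P_m$ degenerated. The subsequent counting, including the regime boundaries and the edge case, is routine but must be carried out carefully to land exactly on $k(2n_i-2k-1)$.
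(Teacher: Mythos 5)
Your proposal is correct, and it is worth noting how it sits relative to the paper: there, Theorem \ref{rango12} carries no written proof of its own — it is obtained by combining Proposition \ref{eqmu2k-1} (itself proved by transporting (\cite{faro2025highergaussianmapshyperelliptic}, Lemma 3.7), with $W_i$ playing the role of $H^0(M)$) with a direct appeal to (\cite{faro2025highergaussianmapshyperelliptic}, Theorem 3.1), so the entire rank count is outsourced to the hyperelliptic reference. You instead carry that count out explicitly, and your computation is sound at every point I checked: the reduction $rk(\mu_{2k-1\mid\Lambda^2 W_i})=\dim Ker(\mu_{2k-3\mid\Lambda^2 W_i})-\dim Ker(\mu_{2k-1\mid\Lambda^2 W_i})$ is legitimate because $\mathbb{Z}/3\mathbb{Z}$-equivariance makes each kernel split along $\Lambda^2W_1\oplus\Lambda^2W_2\oplus(W_1\otimes W_2)$; the substitution $i=(l-\delta)/2$, $j=(l+\delta)/2$ really turns the coefficient of \eqref{equationsmuk-1} into $\delta\,P_m(\delta^2)$ with $P_m$ of degree exactly $m-1$ in $\delta^2$ and leading coefficient $(-1/4)^{m-1}\neq 0$, and the roots $\delta=l-2a$, $0\leq a\leq m-2$, land exactly on the columns with $i\leq m-2$, so the support restriction $i,j\geq m-1$ is encoded automatically rather than spoiling the Vandermonde argument; evaluating $\mathrm{Poly}_{<M(l)}$ at the $N(l)$ distinct points $\delta^2$ then gives $r_k(l)=\min(M(l),N(l))=\min(k,N(l))$ since $N(l)\leq\lceil l/2\rceil-1$. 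Your three-range decomposition of $R_k=\sum_l\min(k,N(l))$ is disjoint precisely for $k\leq\lfloor n_i/2\rfloor-1$, and you correctly isolate the edge case $k=\lfloor n_i/2\rfloor$, where $R_k=\binom{n_i}{2}=\lfloor n_i/2\rfloor\bigl(2n_i-2\lfloor n_i/2\rfloor-1\bigr)$; the totals $R_k=k(2n_i-2k-1)$ and $R_k-R_{k-1}=2n_i-4k+1$ check out (also at $k=1$, consistently with $rk(\mu_{1\mid\Lambda^2 W_i})=2n_i-3$), and the vanishing of the kernel at $k=\lfloor n_i/2\rfloor$ gives the final assertion for $k>\lfloor n_i/2\rfloor$. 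The trade-off is clear: the paper's route is two lines long but opaque without the hyperelliptic reference in hand, while yours makes the combinatorics of the linear system visible and verifiable entirely within the trigonal setting — at the cost of having to police the regime boundaries and the edge case, which you do.
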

    Notice that on Theorem \ref{rango12} we need to assume $g\geq 16$ in order to have $k\geq 2$. In fact $\lfloor\frac{n_1}{2}\rfloor=\lfloor\frac{2r_1+r_2-6}{6}\rfloor=\lfloor\frac{r+r_1-6}{6}\rfloor=\lfloor\frac{g-4}{6}+\frac{r_1}{6}\rfloor\geq \lfloor\frac{g-4}{6}\rfloor\geq k$. So we get $\lfloor\frac{g-4}{6}\rfloor\geq 2$, which is true only if $g\geq 16$.\\
    In what follows for simplicity we sometimes write $Q\in Ker(\mu_{2k+1})$ for some $k$: in this case we always mean $\psi^{-1}(Q)\in Ker(\mu_{2k+1})$.
	\begin{remark}
		\label{trig2trig3}
		Before going on, we want to highlight that Proposition \ref{eqmu2k-1} holds for every cyclic trigonal curve, not only for the ones as in \eqref{trigonal1}.
        \begin{proof}
        Following \cite{moonen}, for a cyclic trigonal curve of equation \eqref{cyctrigonal}, we have
		  $H^0(M)=W_1\bigoplus W_2$, where $$W_1=\langle x\frac{dx}{y},x^2\frac{dx}{y},...,x^{n_1}\frac{dx}{y}\rangle$$ and 
			$$
			W_2=\langle x\prod_{i=r_1+1}^{r}(x-t_i)\frac{dx}{y^2},x^2\prod_{i=r_1+1}^{r}(x-t_i)\frac{dx}{y^2},...,x^{n_2}\prod_{i=r_1+1}^{r}(x-t_i)\frac{dx}{y^2}\rangle
			$$
			where $n_1=\frac{r_1+2r_2-6}{3},n_2=\frac{2r_1+r_2-6}{3}$. \\
		First notice that $W_1$ is always the same, hence the Proposition clearly holds for every family if  $Q\in Ker(\mu_{2k-1\mid \Lambda^2 W_1})$. \\ The case of $W_2$ is  slightly less obvious: notice that here the holomorphic forms are $\prod_{i=r_1+1}^{r}(x-t_i)\beta_i$. If we call $g:=\prod_{i=r_1+1}^{r}(x-t_i)$ the common factor of the forms, a direct computation gives
        \begin{equation}
            \label{conto}
            \mu_{2k-1}(\sum\limits a'_{ij}(g\beta_i\wedge g\beta_j))=g^2\mu_{2k-1}(\sum\limits a'_{ij}(\beta_i\wedge \beta_j)).
        \end{equation}
        We now prove \eqref{conto}, which immediately implies the claim. First, we have $$(g\beta_i)^{(r)}=\sum\limits_{l=0}^r\binom{r}{l}g^{(r-l)}\beta_i^{(l)}.$$ Assume $r+s=2k-1$: we prove the claim by induction on $k$. The case $k=1$ is easy: indeed, we have $$\mu_1((g\beta_i)\wedge (g\beta_j))=(g\beta_i)'(g\beta_j)-(g\beta_j)'(g\beta_i)=(g'\beta_i+g\beta_i')(g\beta_j)-(g'\beta_j+g\beta_j')=g^2(\beta_i'\beta_j-\beta_j'\beta_i).$$ For $k\geq 2$ we get
		\begin{multline*}
			\mu_{2k-1}(\sum\limits a'_{ij}(g\beta_i\wedge g\beta_j))=\sum_{i,j}a'_{ij}((g\beta_i)^{(r)}(g\beta_j)^{(s)}-(g\beta_i)^{(s)}(g\beta_j)^{(r)})=\\
			=\sum\limits_{i,j}a'_{ij}(\sum\limits_{l=0}^r\binom{r}{l}g^{(r-l)}\beta_i^{(l)}\sum\limits_{m=0}^s\binom{s}{m}g^{(s-m)}\beta_i^{(m)}-\sum\limits_{m=0}^s\binom{s}{m}g^{(s-m)}\beta_i^{(m)}\sum\limits_{l=0}^r\binom{r}{l}g^{(r-l)}\beta_i^{(l)})=\\=\sum\limits_{l=0}^r\sum\limits_{m=0}^s\binom{r}{l}\binom{s}{m}g^{(r-l)}g^{(s-m)}\sum\limits_{i,j}a'_{ij}(\beta_i^{(l)}\beta_j^{(m)}-\beta_i^{(m)}\beta_j^{(l)}).
		\end{multline*}
		Let us focus on the last term. By the inductive hypothesis, we get $\mu_{2k-3}(\sum\limits a'_{ij}(g\beta_i\wedge g\beta_j))=g^2\mu_{2k-3}(\sum\limits a'_{ij}(\beta_i\wedge \beta_j))$, hence $\sum\limits a'_{ij}(\beta_i\wedge \beta_j)\in Ker(\mu_{2k-3})$.\\ So we have $\sum\limits_{i,j}a'_{ij}(\beta_i^{(l)}\beta_j^{(m)}-\beta_i^{(m)}\beta_j^{(l)})=0$ for every $l,m$ such that $l+m<r+s=2k-1$. Hence the only remaining terms are those where $l+m=r+s$, which gives $l=r,m=s$. Finally, we get
		$$\mu_{2k-1}(\sum\limits a'_{ij}(g\beta_i\wedge g\beta_j))=g^2\sum\limits_{i,j}a'_{ij}(\beta_i^{(r)}\beta_j^{(s)}-\beta_i^{(s)}\beta_j^{(r)})=g^2\mu_{2k-1}(\sum\limits a'_{ij}(\beta_i\wedge \beta_j)),$$
		as desired. So the claim follows because $g\not\equiv 0$, hence the equations of $Ker(\mu_{2k})$ are the same for every family.
        \end{proof}
	\end{remark}
	Joining Theorem \ref{rango12} and Remark \ref{trig2trig3}, we have the following result.
	\begin{theorem}
		\label{rank}
		Let $C$ be any cyclic trigonal curve of genus $g\geq16$. Then for every $2\leq k\leq \lfloor\frac{n_i}{2}\rfloor$
		\begin{equation}
			rk(\mu_{2k-1\mid \Lambda^2 W_i})=2n_i-4k+1
		\end{equation}
		\begin{equation}
			dim(Ker(\mu_{2k-1\mid \Lambda^2 W_i}))=\frac{n_i(n_i-1)}{2}-k(2n_i-2k-1)
		\end{equation}
		Moreover, if $k>\lfloor\frac{n_i}{2}\rfloor$, then $rk(\mu_{2k-1\mid \Lambda^2 W_i})=0$.
	\end{theorem}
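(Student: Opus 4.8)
The plan is to reduce the general statement to Theorem \ref{rango12} by means of Remark \ref{trig2trig3}, the key point being that the whole computation is combinatorial and depends only on the integers $n_i$ and $k$. Theorem \ref{rango12} already gives the claimed rank and nullity formulas for the special family \eqref{trigonal1}, so it suffices to check that, for an arbitrary cyclic trigonal curve as in \eqref{cyctrigonal}, the restriction $\mu_{2k-1\mid \Lambda^2 W_i}$ is cut out by exactly the same system of linear equations in the coefficients.

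First I would treat $W_1$. The generators $\alpha_i = x^i \frac{dx}{y}$ are identical for every family in \eqref{cyctrigonal}, independent of the branch points $t_i$ and of the exponents $a_i$; only the upper index $n_1 = \frac{r_1 + 2r_2 - 6}{3}$ varies. Consequently $\mu_{2k-1\mid \Lambda^2 W_1}$ is literally the map written down in Proposition \ref{eqmu2k-1}, and $Ker(\mu_{2k-1\mid \Lambda^2 W_1})$ is defined by the same equations, now ranging up to the appropriate $n_1$.

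For $W_2$ the generators instead carry the common factor $g = \prod_{i=r_1+1}^r (x-t_i)$, so the argument is slightly less immediate; here I would invoke the identity \eqref{conto} of Remark \ref{trig2trig3}, namely
$$\mu_{2k-1}\Big(\textstyle\sum a'_{ij}(g\beta_i \wedge g\beta_j)\Big) = g^2\,\mu_{2k-1}\Big(\textstyle\sum a'_{ij}(\beta_i \wedge \beta_j)\Big).$$
Since $g \not\equiv 0$, the left-hand side vanishes precisely when $\sum a'_{ij}(\beta_i \wedge \beta_j) \in Ker(\mu_{2k-1})$, so $Ker(\mu_{2k-1\mid \Lambda^2 W_2})$ is governed by the very same equations of Proposition \ref{eqmu2k-1}, this time with $n_2 = \frac{2r_1 + r_2 - 6}{3}$. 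Thus for both $i=1,2$ the kernel is described by exactly the combinatorial system underlying Theorem \ref{rango12}.

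Finally, the rank and nullity formulas follow because the count of linearly independent equations carried out for Theorem \ref{rango12} (and in \cite{faro2025highergaussianmapshyperelliptic}) depends only on $n_i$ and $k$: substituting the correct $n_i$ yields $rk(\mu_{2k-1\mid \Lambda^2 W_i}) = 2n_i - 4k + 1$, the kernel dimension by rank-nullity, and the vanishing for $k > \lfloor n_i/2 \rfloor$ in the same way. The one point that requires care — and the real content of the argument — is precisely that this counting is purely combinatorial and insensitive to the specific branch points $t_i$; this is exactly what Remark \ref{trig2trig3} guarantees by showing that the defining equations coincide with those for the family \eqref{trigonal1}.
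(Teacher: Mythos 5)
Your proposal is correct and takes essentially the same approach as the paper: Theorem \ref{rank} is obtained there precisely by joining Theorem \ref{rango12} with Remark \ref{trig2trig3}, whose identity \eqref{conto} (together with $g\not\equiv 0$ and the fact that the $\alpha_i$ are unchanged) shows the kernel equations of Proposition \ref{eqmu2k-1} are the same for every family in \eqref{cyctrigonal}, so the purely combinatorial count in terms of $n_i$ and $k$ carries over verbatim.
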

	From Theorem above we get different corollaries.\\
	First, we immediately obtain the following result.
	\begin{corollary}
		Let $C$ be any cyclic trigonal curve of genus $g\geq 16$, let $2\leq k\leq \lfloor\frac{n_1}{2}\rfloor$ be an integer. Then 
		$$rank(\mu_{2k})\geq 2g-8k-2.$$
		While if $\lfloor\frac{n_1}{2}\rfloor < k\leq \lfloor\frac{n_2}{2}\rfloor$, then 
		$$rank(\mu_{2k})\geq 2n_2-4k+1.$$
	\end{corollary}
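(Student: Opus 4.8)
The plan is to reduce the rank of $\mu_{2k}$ to the ranks of the equivariant pieces already computed in Theorem \ref{rank}, and then to add them up using the identity $n_1+n_2=g-2$. First, by the first Remark of this section we have $rk(\mu_{2k})=rk(\mu_{2k-1,M})$, since $\mu_{1,L}(s\wedge t)$ is a nonzero section of $H^0(K_C\otimes L^2)$. Next, by the equivariance recorded in the second Remark, $\mu_{2k-1,M}$ respects the decomposition $\Lambda^2H^0(M)=\Lambda^2W_1\oplus\Lambda^2W_2\oplus(W_1\otimes W_2)$, and the three restrictions $\mu_{2k-1,M\mid\Lambda^2W_1}$, $\mu_{2k-1,M\mid\Lambda^2W_2}$, $\mu_{2k-1,M\mid W_1\otimes W_2}$ have images in the pairwise distinct isotypic components $H^0(K_C^3\otimes L^{-2})^2$, $H^0(K_C^3\otimes L^{-2})^1$, $H^0(K_C^3\otimes L^{-2})^0$, respectively. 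Since these three target subspaces are in direct sum, the total map is block diagonal and its rank is the sum of the three ranks; discarding the nonnegative contribution of $W_1\otimes W_2$ gives
$$rk(\mu_{2k})=rk(\mu_{2k-1,M})\geq rk(\mu_{2k-1\mid\Lambda^2W_1})+rk(\mu_{2k-1\mid\Lambda^2W_2}).$$

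Second, I would record the numerical identity $n_1+n_2=g-2$: from $n_1=\frac{r_1+2r_2-6}{3}$ and $n_2=\frac{2r_1+r_2-6}{3}$ one gets $n_1+n_2=r_1+r_2-4=r-4=g-2$, using $g=r-2$. Fixing the labeling so that $n_1\leq n_2$, I split into the two stated ranges. For $2\leq k\leq\lfloor n_1/2\rfloor$ we then also have $k\leq\lfloor n_2/2\rfloor$, so Theorem \ref{rank} (valid since $g\geq16$) applies to both factors and yields
$$rk(\mu_{2k})\geq(2n_1-4k+1)+(2n_2-4k+1)=2(n_1+n_2)-8k+2=2(g-2)-8k+2=2g-8k-2.$$
For $\lfloor n_1/2\rfloor<k\leq\lfloor n_2/2\rfloor$, the last part of Theorem \ref{rank} gives $rk(\mu_{2k-1\mid\Lambda^2W_1})=0$, while the second restriction still contributes $2n_2-4k+1$, so $rk(\mu_{2k})\geq2n_2-4k+1$, as claimed.

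The one point that genuinely requires care, and which I regard as the main obstacle, is the additivity of the rank across the equivariant decomposition: one must be certain that the three restrictions land in genuinely distinct, hence linearly independent, isotypic components, so that no cancellation occurs and the sum of the two $\Lambda^2W_i$ ranks is a legitimate lower bound. This is precisely what the equivariance in the second Remark guarantees, since the three pieces transform under the three distinct characters $\zeta^2,\zeta^1,\zeta^0$ of $\mathbb{Z}/3\mathbb{Z}$. A secondary bookkeeping matter is the convention $n_1\leq n_2$, which must be fixed at the outset so that the threshold $\lfloor n_1/2\rfloor$ of the first range also lies below $\lfloor n_2/2\rfloor$, keeping both ranks simultaneously nonzero there; the identity $n_1+n_2=g-2$ then collapses the sum into the clean bound $2g-8k-2$. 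Everything else is the direct substitution carried out above.
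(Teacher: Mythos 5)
Your proof is correct and follows essentially the same route as the paper: the paper's own (two-line) proof likewise just sums the ranks $2n_i-4k+1$ from Theorem \ref{rank} using $n_1+n_2=g-2$, with the reduction $rk(\mu_{2k})=rk(\mu_{2k-1,M})$ and the block-diagonality across the $\mathbb{Z}/3\mathbb{Z}$-isotypic components left implicit from the two Remarks you cite. Your explicit justification of rank additivity via the distinct characters, and your flagging of the labeling convention $n_1\leq n_2$, are exactly the details the paper suppresses.
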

	\begin{proof}
		In the first case, from Theorem \ref{rank} we get 
		$$rank(\mu_{2k})\geq 2n_1-4k+1+2n_2-4k+1=2g-8k-2.$$
		The second part is obvious by Theorem \ref{rank}. 
	\end{proof}
	More generally, Theorem \ref{rank} gives us a lower bound for the rank of $\mu_{2k}$ on the general trigonal curve.
	\begin{theorem}
		\label{theogentrig}
		Let $C$ be a general trigonal curve of genus $g\geq 16$, let $2\leq k\leq \lfloor\frac{g-4}{6}\rfloor $ be an integer. Then 
		$$rank(\mu_{2k})\geq 2g-8k-2.$$
	\end{theorem}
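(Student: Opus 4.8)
The plan is to deduce the statement for a general trigonal curve from the bound already established for cyclic trigonal curves (the preceding Corollary) by a semicontinuity argument inside the trigonal locus $\mathcal{T}_g\subset\mathcal{M}_g$, which is irreducible. First I would fix $g\geq 16$ and $2\leq k\leq\lfloor\frac{g-4}{6}\rfloor$ and choose a cyclic trigonal curve $C_0$ of genus $g$. As observed after Theorem \ref{rango12}, one has $\lfloor\frac{g-4}{6}\rfloor\leq\lfloor\frac{n_i}{2}\rfloor$ for $i=1,2$, so that $k\leq\lfloor\frac{n_1}{2}\rfloor$ and $k\leq\lfloor\frac{n_2}{2}\rfloor$; hence the preceding Corollary applies to $C_0$ and gives $rk(\mu_{2k})(C_0)\geq 2g-8k-2$. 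Since the cyclic trigonal curves form a nonempty subvariety of the irreducible variety $\mathcal{T}_g$, it then suffices to prove that $rk(\mu_{2k})$ does not decrease when passing from $C_0$ to a general point of $\mathcal{T}_g$.

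To make this precise I would organize the even Gaussian tower as a diagram of coherent sheaves over a base parametrizing genus-$g$ trigonal curves. The source $I_2(K_C)$ has \emph{constant} dimension $\binom{g-2}{2}$ over all of $\mathcal{T}_g$, since Lemma \ref{lemmaiso} produces a basis $\{Q_{ij}\}$ uniformly; thus it defines a vector bundle $\mathcal{I}$, and likewise the targets $H^0(K_C^{\otimes 2j+2})$ form bundles. The key point is to realize the iterated kernel $Ker(\mu_{2k-2})$ as the kernel of a \emph{single} morphism $\Phi_{k-1}\colon\mathcal{I}\to\mathcal{G}_{k-1}$ with fixed source $\mathcal{I}$: this should be possible because membership in $Ker(\mu_{2k-2})$ is governed pointwise by the vanishing of the contractions of Taylor coefficients $\sum_{i,j}a_{ij}f_i^{(h)}f_j^{(k')}$ of Proposition \ref{remarkfrediani}, through the relation between the even Gaussian maps and the second fundamental form $\rho$. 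Over the open locus $U\subseteq\mathcal{T}_g$ on which $rk(\Phi_{k-1})$ is maximal, $Ker(\mu_{2k-2})$ is a subbundle, $\mu_{2k}$ restricts to a genuine morphism of vector bundles, and its rank is therefore lower semicontinuous on $U$; so its generic value is at least its value at any point of $U$.

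The main obstacle is exactly the interface between the varying source and the rank. Because $\mu_{2k}$ lives on $Ker(\mu_{2k-2})$, whose dimension is only upper semicontinuous, the quantity $rk(\mu_{2k})=\dim Ker(\mu_{2k-2})-\dim Ker(\mu_{2k})$ is a difference of two upper semicontinuous functions and is not semicontinuous on its own; the bound propagates from $C_0$ only if $C_0$ realizes the generic (minimal) value of $\dim Ker(\mu_{2k-2})$, that is, only if $C_0\in U$. I would secure this by induction on $k$. The base case is $\mu_2$, whose rank equals the constant $4g-18$ on all of $\mathcal{T}_g$ by the theorem of Colombo--Frediani recalled above, so $\dim Ker(\mu_2)$ is constant and $C_0$ is generic at the first level. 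For the inductive step one must compare $\dim Ker(\mu_{2k-2})(C_0)$, which is explicitly computable for every cyclic trigonal curve through Proposition \ref{eqmu2k-1}, Theorem \ref{rank} and Remark \ref{trig2trig3}, with the generic value over $\mathcal{T}_g$, and show they agree. This constancy of the lower iterated-kernel dimensions is the genuinely delicate step; granting it, $C_0\in U$, and the lower semicontinuity of the bundle map $\mu_{2k}$ on $U$ yields $rk(\mu_{2k})(C)\geq rk(\mu_{2k})(C_0)\geq 2g-8k-2$ for $C$ general, as claimed.
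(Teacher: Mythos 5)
Your overall route---establish the bound at a cyclic trigonal curve $C_0$ and propagate it across the irreducible trigonal locus by semicontinuity---is exactly the argument the paper intends: Theorem \ref{theogentrig} appears there with no proof at all, presented as an immediate consequence of Theorem \ref{rank} and its Corollary, with the specialization step left tacit. Your setup is sound: the range reduction ($k\leq\lfloor\frac{g-4}{6}\rfloor$ implies $k\leq\lfloor\frac{n_i}{2}\rfloor$, as the paper checks after Theorem \ref{rango12}), the identification of $I_2$ as a bundle of rank $\binom{g-2}{2}$ via Lemma \ref{lemmaiso}, and the realization of $Ker(\mu_{2k-2})$ as the kernel of a single bundle map with fixed source (restriction to an infinitesimal neighbourhood of the diagonal in $C\times C$). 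Moreover, your diagnosis of the real difficulty is correct and sharper than anything in the paper: since $rank(\mu_{2k})=\dim Ker(\mu_{2k-2})-\dim Ker(\mu_{2k})$ is a difference of two upper semicontinuous functions, the bound transfers from $C_0$ to the general curve only if $C_0$ realizes the \emph{generic} value of $\dim Ker(\mu_{2k-2})$. Your base case genuinely closes: by Colombo--Frediani (\cite{10.1307/mmj/1260475698}) one has $rank(\mu_2)=4g-18$ on the whole trigonal locus, so $\dim Ker(\mu_2)$ is constant there, and the case $k=2$ of the theorem follows rigorously from your argument.

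For $k\geq 3$, however, your proof is only conditional, and this is a genuine gap. The pivotal inductive step---that $\dim Ker(\mu_{2j})(C_0)$ equals the generic value over the trigonal locus for $2\leq j\leq k-1$---is explicitly ``granted'' rather than proven, and nothing in the paper supplies it. Worse, the input you claim to have is not actually available: Proposition \ref{eqmu2k-1}, Theorem \ref{rank} and Remark \ref{trig2trig3} compute only the restrictions $\mu_{2j-1}|_{\Lambda^2W_1}$ and $\mu_{2j-1}|_{\Lambda^2W_2}$; the component of the iterated kernels inside $\psi(W_1\otimes W_2)$ is computed only at the first level ($j=1$, Theorem \ref{mu2tensor}), and the paper's closing remark states explicitly that the equations of $\mu_{2k}$ on $\psi(W_1\otimes W_2)$ are out of reach. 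Hence $\dim Ker(\mu_{2j})(C_0)$ is \emph{unknown} for $j\geq 2$---which is precisely why the cyclic-curve statement is only the lower bound $rank(\mu_{2k})\geq (2n_1-4k+1)+(2n_2-4k+1)=2g-8k-2$ rather than an equality---so the comparison your induction requires cannot even be set up from the paper's results, and the constancy itself could fail, cyclic curves being special points where kernels may jump. In short: your proposal proves the theorem for $k=2$ and correctly isolates the semicontinuity subtlety hidden in the paper's one-line deduction, but for $k\geq 3$ it does not close the step it identifies, so as a proof of the full statement it is incomplete.
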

	Finally, Theorem \ref{rank} allows us to show the non existence of asymptotic directions in a certain subspace of $H^1(T_C)$.
	\begin{theorem}
		\label{asytrig}
		Let $C$ be any cyclic trigonal curve of genus $g\geq 16$ and let $V\subset H^1(T_C)$ be the set of asymptotic directions. Then for a general point $p\in C$,  $V\cap \langle \xi_p^1,...,\xi_p^{\lfloor\frac{n_2}{2}\rfloor} \rangle=0$.
	\end{theorem}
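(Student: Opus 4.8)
The plan is to prove the statement by a descending induction that peels off, one coefficient at a time, the top term of a putative asymptotic direction $\zeta=\sum_{n=1}^{N}c_n\xi_p^n$, where I write $N:=\lfloor\tfrac{n_2}{2}\rfloor$. Fix a general $p\in C$; in particular $p$ is not a ramification point of $\pi$, so $x$ is a local coordinate centered at $p$, and for $Q=\sum_{i,j}a_{ij}\,\omega_i\otimes\omega_j\in I_2$ (with $\omega_i=f_i(z)\,dz$) I abbreviate $A_{h,k}:=\sum_{i,j}a_{ij}f_i^{(h)}(0)f_j^{(k)}(0)$. The engine of the induction is the localization furnished by Proposition \ref{remarkfrediani}: if $A_{h,k}=0$ for all $h+k\le 2M-1$, then $\rho(Q)(\xi_p^n\odot\xi_p^l)=0$ for every $n+l\le 2M-1$, while the diagonal value $\rho(Q)(\xi_p^M\odot\xi_p^M)$ reduces to the explicit leading expression $2\pi i\sum_{k=0}^{M-1}A_{2M-k,k}\,\tfrac{M-k}{k!\,(2M-k)!}$.

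Granting this, the inductive step is clean, and here the key point is that at each stage the current top index controls the maximal total order. Assume $\zeta$ is asymptotic and that $c_{M+1}=\dots=c_N=0$ have already been established, so $\zeta=\sum_{n=1}^{M}c_n\xi_p^n$; then the largest total order $n+l$ occurring in $\rho(Q)(\zeta\odot\zeta)=\sum_{n,l\le M}c_nc_l\,\rho(Q)(\xi_p^n\odot\xi_p^l)$ is $2M$, attained only by $(n,l)=(M,M)$. Suppose we can exhibit $Q_M\in I_2$ with $A_{h,k}(Q_M)=0$ for all $h+k\le 2M-1$ and $\rho(Q_M)(\xi_p^M\odot\xi_p^M)\ne 0$. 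Then every pairing with $n+l\le 2M-1$ vanishes and no pairing of total order larger than $2M$ is present, so the defining identity $\rho(Q_M)(\zeta\odot\zeta)=0$ collapses to the single term $c_M^2\,\rho(Q_M)(\xi_p^M\odot\xi_p^M)=0$, forcing $c_M=0$. (Note it is essential to impose vanishing up to order $2M-1$, not merely $2M-2$, otherwise the cross term $2c_{M-1}c_M\,\rho(Q_M)(\xi_p^{M-1}\odot\xi_p^M)$ survives.) Running this from $M=N$ down to $M=1$ kills every $c_n$, which is exactly $V\cap\langle\xi_p^1,\dots,\xi_p^N\rangle=0$. Thus the whole proof reduces to producing, for each $1\le M\le N$ and general $p$, such a detecting element $Q_M$.

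This existence is where Theorem \ref{rank} enters. Via the isomorphism $\psi$ of Lemma \ref{lemmaiso} and the reduction $\mathrm{rk}(\mu_{2M})=\mathrm{rk}(\mu_{2M-1,M})$, the pair of requirements on $Q_M$ translates, by the dictionary of \cite{faro2025highergaussianmapshyperelliptic} together with the relation of \cite{Frediani2025}, into: $\psi^{-1}(Q_M)$ lies in the domain kernel $\ker(\mu_{2M-3,M})$ (which yields the required Taylor vanishing of the $A_{h,k}$ at $p$), while $\mu_{2M-1,M}(\psi^{-1}(Q_M))\ne 0$ (which is the nonvanishing of the diagonal pairing, up to a nonzero constant the evaluation at $p$ of $\mu_{2M}(Q_M)$). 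Seeking $Q_M$ inside $\psi(\Lambda^2 W_2)$, Theorem \ref{rank} gives $\mathrm{rk}(\mu_{2M-1\mid\Lambda^2 W_2})=2n_2-4M+1\ge 1$ for every $2\le M\le N$ (since $2M\le n_2$), so the relevant map is not identically zero and a detecting $Q_M\in\psi(\Lambda^2 W_2)$ exists whose associated section $\mu_{2M}(Q_M)\in H^0(K_C^{\otimes 2M+2})$ is nonzero. It is exactly the choice of $W_2$ (rather than $W_1$) that pushes the range of $M$ up to $\lfloor\tfrac{n_2}{2}\rfloor$. The base case $M=1$ falls outside Theorem \ref{rank} but is classical: it is the nonvanishing $\mathrm{rk}(\mu_2)=4g-18>0$ of Colombo--Frediani together with $\rho(Q)(\xi_p\odot\xi_p)=-2\pi i\,\mu_2(Q)(p)$. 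Finally, choosing $p$ outside the finite union of the (proper) zero loci of the finitely many nonzero sections $\mu_{2M}(Q_M)$, $1\le M\le N$, makes all $N$ diagonal pairings nonzero at one and the same general point $p$.

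The main obstacle is the index and parity bookkeeping in the dictionary of the previous paragraph: one must verify precisely which lower kernel $\ker(\mu_{2M-3,M})$ encodes the canonical Taylor vanishing $A_{h,k}=0$ for $h+k\le 2M-1$, and that $\mu_{2M-1,M}(\psi^{-1}(Q))\ne 0$ is genuinely the nonvanishing of $\rho(Q)(\xi_p^M\odot\xi_p^M)$ — the twist by the $g^1_3$ shifts the effective order by one, and keeping track of this shift (exactly as in \cite{faro2025highergaussianmapshyperelliptic}) is the delicate part. The reassuring consistency check is that the resulting threshold, detection possible iff $M\le\lfloor\tfrac{n_2}{2}\rfloor$, matches precisely the vanishing of $\mathrm{rk}(\mu_{2M-1\mid\Lambda^2 W_2})$ for $M>\lfloor\tfrac{n_2}{2}\rfloor$ in Theorem \ref{rank}. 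Once this identification and the single genericity choice of $p$ are in place, the descending induction closes and the theorem follows.
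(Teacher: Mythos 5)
Your proposal is correct and follows essentially the same route as the paper's proof: the paper likewise uses the strict kernel chain $\operatorname{Ker}(\mu_{2k-1})\subsetneq \operatorname{Ker}(\mu_{2k-3})$ from Theorem \ref{rank} (with $\Lambda^2 W_2$ governing the range up to $\lfloor \tfrac{n_2}{2}\rfloor$) to pick detecting quadrics $Q_{k-1}$, chooses $p$ outside the union of the zero loci of the associated sections, and then applies Proposition \ref{remarkfrediani} so that for $v$ with top nonzero coefficient $a_k$ one gets $\rho(Q_{k-1})(v\odot v)=a_k^2 c_k\,\mu(Q_{k-1})(p)\neq 0$. Your descending induction killing coefficients one at a time is just a rephrasing of the paper's ``take the maximal nonzero index and contradict asymptoticity,'' and your explicit treatment of the base case $M=1$ via $\operatorname{rank}(\mu_2)=4g-18$ only makes explicit what the paper leaves implicit.
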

	\begin{proof}
		We will follow the idea of (\cite{Frediani2025},[Theorem 3.4]).\\
		From Theorem \ref{rank} we have $Ker(\mu_{2k-1})\subsetneq Ker(\mu_{2k-3})$ if $1\leq k\leq \lfloor\frac{n_2}{2}\rfloor$, hence for every $i=1,...,\lfloor\frac{n_2}{2}\rfloor-1$ we can choose $Q_i\in Ker(\mu_{2i-1})$ to be a quadric such that $Q_i\notin Ker(\mu_{2i+1})$. Notice that here we need $\lfloor\frac{n_2}{2}\rfloor\geq 2$, which is true since $g\geq 16$. In fact, $\lfloor\frac{n_2}{2}\rfloor=\lfloor\frac{2r_1+r_2-6}{6}\rfloor=\lfloor\frac{r+r_1-6}{6}\rfloor=\lfloor\frac{g-4}{6}+\frac{r_1}{6}\rfloor\geq \lfloor\frac{g-4}{6}\rfloor\geq 2$. Let $U:=C\setminus \cup_{i=1}^{\lfloor\frac{n_2}{2}\rfloor-1} Z(\mu_{2i}(Q_i))$, where by $Z(\mu_{2i}(Q_i))$ we mean the zero set of the section. Let $p\in U$ and let $v\neq 0\in V$. Let $v=a_1\xi_p^1+...+a_{\lfloor\frac{n_2}{2}\rfloor}\xi_p^{\lfloor\frac{n_2}{2}\rfloor}$ and let $1\leq k\leq \lfloor\frac{n_2}{2}\rfloor$ be the maximum such that $a_k\neq 0$. Then we have 
		\begin{equation}
			\rho(Q_{k-1})(v\odot v)=\sum_{i,j=1}^k a_ia_j\rho(Q_{k-1})(\xi_p^i\odot \xi_p^j)=a_k^2\rho(Q_{k-1})(\xi_p^k\odot \xi_p^k)=a_k^2c_k\mu_{2k-1}(Q_{k-1})(p)\neq 0
		\end{equation}
		by the choice of $Q_{k-1}$ and $p$. This is a contradiction, since $v$ is asymptotic.
	\end{proof}
	Theorem \ref{asytrig} also gives us a bound for the dimension of a totally geodesic subvariety of $\mathcal{A}_g$ generically contained in the Torelli locus and passing through a cyclic trigonal curve. More precisely, we have the following corollary.
	\begin{corollary}
		\label{dimensasy}
		Let $Y$ be a germ of a totally geodesic subvariety of $\mathcal{A}_g$ generically contained in $j(\mathcal{M}^0_g)$ and passing through $j(C)$, where $C$ is a trigonal curve of genus $g\geq 16$ as in \eqref{cyctrigonal}. Then we have
        $$dim(Y)=dim(W)\leq 3g-3-\lfloor\frac{n_2}{2}\rfloor.$$
        In particular, for curves as in \eqref{trigonal1} we have 
        $$dim(Y)\leq \lfloor \frac{8g-8}{3}\rfloor.$$
	\end{corollary}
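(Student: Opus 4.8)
The plan is to convert the statement about the totally geodesic germ $Y$ into a statement about asymptotic directions, and then to combine this with Theorem~\ref{asytrig} and an elementary dimension count inside $H^1(T_C)$. Throughout I identify $W:=T_{j(C)}Y$ with a subspace of $H^1(T_C)$ via $dj$, so that $\dim(Y)=\dim(W)$ since $Y$ is a smooth germ.

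First I would recall the link between totally geodesic subvarieties of $\mathcal{A}_g$ lying in the Torelli locus and the second fundamental form of the Torelli map. Since $\mathcal{A}_g$ is locally symmetric and $Y$ is totally geodesic in $\mathcal{A}_g$ and generically contained in $j(\mathcal{M}^0_g)$, the second fundamental form of $Y$ in $\mathcal{A}_g$ vanishes; decomposing it along $Y\subseteq j(\mathcal{M}^0_g)\subseteq\mathcal{A}_g$ and taking the component normal to $j(\mathcal{M}^0_g)$, one obtains that $\sigma$ restricted to $W\otimes W$ vanishes (see \cite{Frediani2025}). Because $\rho$ is the dual of $\sigma$ and $I_2=\mathcal{N}^{\vee}_{\mathcal{M}^0_g/\mathcal{A}_g}$, the condition $\sigma(w\otimes w)=0$ is equivalent to $\rho(Q)(w\odot w)=0$ for all $Q\in I_2$, i.e. to $w$ being asymptotic. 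Hence $W\subseteq V$.

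Next I would invoke Theorem~\ref{asytrig}: for a general point $p\in C$ one has $V\cap\langle \xi_p^1,\dots,\xi_p^{\lfloor\frac{n_2}{2}\rfloor}\rangle=0$, and together with $W\subseteq V$ this yields $W\cap\langle \xi_p^1,\dots,\xi_p^{\lfloor\frac{n_2}{2}\rfloor}\rangle=0$ for general $p$. For such a $p$ the higher Schiffer variations $\xi_p^1,\dots,\xi_p^{\lfloor\frac{n_2}{2}\rfloor}$ are linearly independent (their number is far below $3g-3$), so their span has dimension exactly $\lfloor\frac{n_2}{2}\rfloor$. Since $\dim H^1(T_C)=3g-3$ and $W$ meets this span trivially, the two subspaces are in direct sum, giving
$$\dim(Y)=\dim(W)\leq (3g-3)-\lfloor\tfrac{n_2}{2}\rfloor,$$
which is the first bound. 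Finally I would specialize to the family \eqref{trigonal1}, where $r_1=r$, $r_2=0$ and $g=r-2$, so that $n_2=\frac{2r-6}{3}=\frac{2g-2}{3}$; the congruence $r\equiv 0\pmod{3}$ forces $g\equiv 1\pmod{3}$, whence $n_2$ is even and $\lfloor\frac{n_2}{2}\rfloor=\frac{g-1}{3}$. Substituting, $(3g-3)-\frac{g-1}{3}=\frac{8(g-1)}{3}=\lfloor\frac{8g-8}{3}\rfloor$, which is the second bound.

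I expect the main obstacle to be the first step: making precise, and correctly attributing, the fact that the tangent space to a totally geodesic germ generically contained in the Torelli locus consists of asymptotic directions, which rests on the behaviour of the second fundamental form of $\mathcal{M}^0_g$ in $\mathcal{A}_g$. Everything downstream is the already-established Theorem~\ref{asytrig} together with a linear-algebra count, the only secondary point being the linear independence of the higher Schiffer variations at a general point, which ensures their span attains the full dimension $\lfloor\frac{n_2}{2}\rfloor$.
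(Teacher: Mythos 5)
Your proposal is correct and follows essentially the same route as the paper: tangent vectors to the totally geodesic germ are asymptotic, so $W$ meets $\langle \xi_p^1,\dots,\xi_p^{\lfloor n_2/2\rfloor}\rangle$ trivially by Theorem~\ref{asytrig}, and the bound follows from $\dim(V)+\dim(W)=\dim(V+W)\leq 3g-3$, with the same specialization $n_2=\frac{2g-2}{3}$, $\lfloor\frac{n_2}{2}\rfloor=\frac{g-1}{3}$ for the family \eqref{trigonal1}. You are in fact slightly more careful than the paper, which takes for granted both the asymptoticity of tangent directions (citing it as known) and the linear independence of the higher Schiffer variations at a general point.
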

	\begin{proof}
		Set $W:=T_{j(C)}Y\subset H^1(T_C)$ and let $V:=\langle\xi_p^1,...,\xi_p^{\lfloor\frac{n_2}{2}\rfloor}\rangle$. Since a tangent vector to a totally geodesic variety is asymptotic, from Theorem \ref{asytrig}
		we get $W\cap V=(0)$. So we get 
		$$dim(V)+dim(W)=dim(V+W)\leq dim(H^1(T_C))=3g-3,$$
		hence $$dim(Y)=dim(W)\leq 3g-3-\lfloor\frac{n_2}{2}\rfloor.$$ In order to obtain the smallest possible dimension, we need to take $n_2$ as big as possible. This is precisely the case for curves as in \eqref{trigonal1}. Recalling that $n_2=\frac{2r-6}{3}$ and $r=g+2$, we have 
		$$dim(Y)\leq 3g-3-\lfloor\frac{g-1}{3}\rfloor=\lfloor \frac{8g-8}{3}\rfloor.$$
	\end{proof}
	\begin{remark}
		Unfortunately, the bound that we obtained in Corollary \ref{dimensasy} is strictly greater than the one in (\cite{doi:10.1142/S0129167X15500056}, Theorem 4.3). The authors proved that if $Y$ is a totally geodesic subvariety of $\mathcal{A}_g$ generically contained in the Torelli locus and passing through a trigonal curve, then $dim(Y)\leq 2g-1$.
	\end{remark}
	Now our goal is to describe the equations of $\mu_2$ for curves as in \eqref{trigonal1}. By Theorem \ref{rank}, we only need to compute the equations of $\mu_{1|W_1\otimes W_2}$. Let us define $f:=\frac{1}{y}$. Then we have 
	$$\mu_{1,M}(\alpha_i\wedge\beta_j)=(fx^i)'(f^2x^j)-(fx^i)(f^2x^j)'=f^3x^{i+j-1}[(i-j)-\frac{f'}{f}],$$ where the last equality follows after some easy computations. Hence, if $$\psi^{-1}(Q)=\sum \limits_{\substack{1 \leq i\leq n_1  \\ 1\leq j \leq n_2}} a_{ij}(\alpha_i\wedge \beta_j)$$ we get 
	\begin{equation}
		\label{mu1}
		\mu_1(\psi^{-1}(Q))=f^3\sum \limits_{\substack{1 \leq i\leq n_1  \\ 1\leq j \leq n_2}}a_{ij}x^{i+j-1}[(i-j)-\frac{f'}{f}].
	\end{equation}
	In order to simplify notation, we denote $h(x):=\prod_{i=1}^r(x-t_i)$. By Equation \eqref{trigonal1}, notice that we have $\frac{f'}{f}=-\frac{h'}{3y^3}=-\frac{h'}{3h}.$ Substituting in Equation \eqref{mu1}, we obtain 
	\begin{equation}
		\mu_1(\psi^{-1}(Q))=\frac{f^3}{3h}\sum \limits_{\substack{1 \leq i\leq n_1  \\ 1\leq j \leq n_2}}a_{ij}x^{i+j-1}[3h(i-j)+xh'].
	\end{equation}
	Hence 
	\begin{equation}
		\label{poly}
		\mu_1(Q)=0 \Leftrightarrow \sum \limits_{\substack{1 \leq i\leq n_1  \\ 1\leq j \leq n_2}}a_{ij}x^{i+j-1}[3h(i-j)+xh']=0.
	\end{equation}
	The expression in Equation \eqref{poly} is a polynomial in $x$, which is zero if and only if every coefficient is zero. In order to understand what the coefficients are, we need to write explicitly $h(x)$ and $h'(x)$. We denote by $\sigma_t$ the elementary symmetric function in $t$ variables. We have 
	$$h(x)=\sum\limits_{t=0}^{r-1}\sigma_tx^{r-t},$$
	$$xh'(x)=\sum\limits_{t=0}^{r-1}(r-t)\sigma_{r-t}x^{r-t}.$$
	So, substituting in Equation \eqref{poly} we get the following expression: 
	\begin{equation}
		\label{poly1}
		\mu_1(Q)=0\Longleftrightarrow \sum\limits_{s=2}^{n_1+n_2+r-1}x^s\sum \limits_{\substack{0 \leq t\leq r-1  \\ 2\leq l \leq n+m\\ t+s+1=l+r}}\sigma_t\sum \limits_{\substack{1 \leq i\leq n_1 \\ 1\leq j \leq n_2\\ i+j=l}}a_{ij}(3(i-j)+r-t)=0.
	\end{equation}
	\begin{theorem}
		\label{mu2tensor}
		Let $C$ be a trigonal curve as in \eqref{trigonal1}. Then 
		$$ rank(\mu_{1|W_1\otimes W_2})=2r-12.$$
	\end{theorem}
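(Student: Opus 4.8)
The plan is to analyze the polynomial condition in \eqref{poly} directly, by factoring the map $\mu_{1|W_1\otimes W_2}$ as a composite of two linear maps whose images I can control separately. Since multiplication by the nonzero rational function $\tfrac{f^3}{3h}$ is injective, $\operatorname{rank}(\mu_{1|W_1\otimes W_2})$ equals the dimension of the image of the linear map $Q\mapsto F(x)$, where $F(x):=\sum_{i,j}a_{ij}x^{i+j-1}[3h(i-j)+xh']$ is the polynomial in \eqref{poly}. The first step is to rewrite it as
$$F(x)=3h(x)A(x)+xh'(x)B(x),\qquad A(x)=\sum_{l=2}^{n_1+n_2}P_lx^{l-1},\quad B(x)=\sum_{l=2}^{n_1+n_2}S_lx^{l-1},$$
where $P_l:=\sum_{i+j=l}a_{ij}(i-j)$ and $S_l:=\sum_{i+j=l}a_{ij}$. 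In this way $Q\mapsto F$ factors as $\Psi\circ\Phi$, with $\Phi\colon Q\mapsto(A,B)$ recording the forms $(P_l,S_l)_l$ and $\Psi\colon(A,B)\mapsto 3hA+xh'B$; note that $A,B$ lie in $U:=\langle x,\dots,x^{n_1+n_2-1}\rangle$.

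Next I would compute $\dim\operatorname{Im}\Phi$. Since the variables $a_{ij}$ with a fixed value of $l=i+j$ are independent of the others, $\operatorname{Im}\Phi$ is the direct product over $l$ of the image $V_l\subseteq\mathbb{C}^2$ of $(a_{ij})_{i+j=l}\mapsto(P_l,S_l)$. When there are at least two pairs $(i,j)$ with $i+j=l$ the two forms $\sum(i-j)a_{ij}$ and $\sum a_{ij}$ are linearly independent (the numbers $i-j$ being pairwise distinct), so $V_l=\mathbb{C}^2$; when there is a single pair, $V_l$ is a line. A direct count shows that the number $N_l$ of such pairs equals $1$ exactly for $l=2$ and $l=n_1+n_2$ (this uses $n_1\geq2$, which holds in the relevant range of $g$) and $N_l\geq2$ otherwise, whence
$$\dim\operatorname{Im}\Phi=2(n_1+n_2-1)-2=2(n_1+n_2)-4=2r-12,$$
using $n_1+n_2=r-4$.

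It remains to prove that $\Psi$ is injective on $U\times U$, so that $\dim\Psi(\operatorname{Im}\Phi)=\dim\operatorname{Im}\Phi$. Writing $h=x\tilde h$ with $\tilde h=\prod_{i=2}^r(x-t_i)$, a relation $3hA+xh'B=0$ becomes, after dividing by $x$, $3\tilde h A=-h'B$. Because the roots $t_i$ are distinct and nonzero for $i\geq2$, one checks that $h'$ and $\tilde h$ have no common root, so $\gcd(\tilde h,h')=1$ and hence $\tilde h\mid B$; but $\deg B\leq n_1+n_2-1=r-5<r-1=\deg\tilde h$ forces $B=0$, and then $A=0$. Thus $\Psi$ is injective and $\operatorname{rank}(\mu_{1|W_1\otimes W_2})=\dim\operatorname{Im}\Phi=2r-12$, which is moreover consistent with $\operatorname{rank}(\mu_2)=4g-18$ through the decomposition into $\Lambda^2W_1$, $\Lambda^2W_2$ and $W_1\otimes W_2$.

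I expect the main obstacle to be the injectivity of $\Psi$: the heart of the matter is the coprimality $\gcd(\tilde h,h')=1$ together with the degree bound $\deg A,\deg B\leq r-5$. The combinatorial identification of the two ``corner'' indices $l$ with $N_l=1$ is the other delicate point, since these two degenerate values are exactly what accounts for the gap between the naive count $2(n_1+n_2-1)$ and the correct rank $2r-12$.
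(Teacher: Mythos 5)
Your proof is correct, but it is organized quite differently from the paper's. Both arguments start from the polynomial identity \eqref{poly}, and both ultimately rest on the same per-$l$ linear algebra: your observation that the pair of forms $P_l=\sum_{i+j=l}a_{ij}(i-j)$ and $S_l=\sum_{i+j=l}a_{ij}$ spans the relevant two-dimensional space (one-dimensional at the corners $l=2$ and $l=n_1+n_2$) is exactly what the paper extracts by its subtraction trick, where from the family of equations $\sum_{i+j=l}a_{ij}(3(i-j)+r-t)=0$ with varying $t$ only two independent ones survive; both counts give $2+2(r-7)=2r-12$. The difference is in how the blocks for distinct $l$ are assembled. The paper computes the rank as the number of linearly independent conditions in the system \eqref{poly2}, fixing $l$ and row-reducing, and implicitly treats the conditions for different $l$ as decoupled --- but the honest equations \eqref{poly1}, one for each power $x^s$, mix several values of $l$ with weights $\sigma_t$ (weights that are even dropped in \eqref{poly2}), so the decoupling deserves justification, e.g.\ via a triangularity-in-$s$ argument. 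Your factorization $\mu_{1|W_1\otimes W_2}=\Psi\circ\Phi$ replaces that step by something cleaner and unconditional: the image of $\Phi$ is a genuine direct sum $\bigoplus_l V_l$ because the variable groups $\{a_{ij}\}_{i+j=l}$ are disjoint, and the cross-$l$ interaction is absorbed into the injectivity of $\Psi\colon(A,B)\mapsto 3hA+xh'B$, which you prove correctly: writing $h=x\tilde h$, a relation gives $3\tilde hA=-h'B$; since the $t_i$ are distinct, every root of $\tilde h$ is a simple root of $h$, so $\gcd(\tilde h,h')=1$, whence $\tilde h\mid B$, and $\deg B\leq r-5<r-1=\deg\tilde h$ forces $B=0$ and then $A=0$. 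This computes the image dimension directly rather than counting kernel equations, and it makes the result independent of any genericity of the $t_i$. One point you rightly flag, which the paper leaves implicit: the identification of $\{2,n_1+n_2\}$ as the only indices with $N_l=1$ requires $n_1\geq 2$, i.e.\ $r\geq 12$ ($g\geq 10$); for smaller genus additional corner values of $l$ would appear and the count would change, a hypothesis absent from the statement in both versions but satisfied wherever the theorem is applied.
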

	\begin{proof}
		From Equation \eqref{poly1}, finally we get that $Q\in Ker(\mu_2)$ if and only if for every $2\leq s\leq n_1+n_2+r-1$ we have
		\begin{equation}
			\label{poly2}
			\sum \limits_{\substack{0 \leq t\leq r-1  \\ 2\leq l \leq n_1+n_2\\ t+s+1=l+r}}\sum \limits_{\substack{1 \leq i\leq n_1 \\ 1\leq j \leq n_2\\ i+j=l}}a_{ij}(3(i-j)+r-t)=0.
		\end{equation}
		We need to compute the number of linearly independent equations in \eqref{poly2}. So let us fix $l=i+j$. Notice that if $l=2$ or $l=n_1+n_2=r-4$ we only get one equation, which is either $$a_{11}=0$$ or $$a_{n_1n_2}=0.$$ Let us assume now $3\leq l\leq r-5$. For any $l$, a priori we have several different equations which depend on the value of $s$: more precisely we have $r-t=s+1-l$ and $s$ can increase at each step inside its range. We claim that, for fixed $l$, only two of these equations are linearly independent. Indeed, they are of the following form:
		\begin{equation}
			\begin{cases}
				a_{i_1j_1}(3(i_1-j_1)+r-t_1)+...+a_{i_uj_u}(3(i_u-j_u)+r-t_1)=0\\
				a_{i_1j_1}(3(i_1-j_1)+r-t_1+1)+...+a_{i_ub_u}(3(i_u-j_u)+r-t_1+1)=0\\ \vdots \\
				a_{i_1b_1}(3(i_1-j_1)+r-t_1+\delta)+...+a_{i_uj_u}(3(i_u-j_u)+r-t_1+\delta)=0\\
			\end{cases}
		\end{equation}
		where $i_k+j_k=l$, $u$ is the number of such couples $(i_k,j_k)$ and $\delta>0$ is an integer. Now, subtracting the first one from the second one we get $$a_{i_1j_1}+...+a_{i_uj_u}=0.$$ Notice that the same happens by subtracting the first one from any other equation, hence the first two are the only ones which are linearly independent. 
		Hence we proved that the number of linearly independent equations is $2+2(r-5-2)=2r-12$, which ends the proof.
	\end{proof}
	Notice that joining Theorem \ref{rank} and Theorem \ref{mu2tensor}, we obtain a new proof of the following result, which was already known by Frediani and Colombo (\cite{10.1307/mmj/1260475698}).
	\begin{theorem}
		Let $C$ be a trigonal curve as in \eqref{trigonal1}. Then $$rank(\mu_2)=2n_1-3+2n_2-3+2r-12=4g-18.$$
	\end{theorem}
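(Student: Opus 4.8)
The plan is to reduce everything to the first Gaussian map $\mu_{1,M}$ and then exploit the $\mathbb{Z}/3\mathbb{Z}$-equivariant splitting recorded in the second remark. By the first remark we have $rk(\mu_2)=rk(\mu_{1,M})$, so it suffices to compute the rank of $\mu_{1,M}$ on all of $\Lambda^2 H^0(M)=\Lambda^2 W_1\oplus \Lambda^2 W_2\oplus W_1\otimes W_2$. The key structural observation is that, since $\mu_{1,M}$ is equivariant and the three summands are sent into the three \emph{distinct} isotypic pieces $H^0(K_C^3\otimes L^{-2})^2$, $H^0(K_C^3\otimes L^{-2})^1$ and $H^0(K_C^3\otimes L^{-2})^0$, the three images lie in an internal direct sum. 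Consequently the rank is additive:
$$rk(\mu_{1,M})=rk(\mu_{1,M\mid \Lambda^2 W_1})+rk(\mu_{1,M\mid \Lambda^2 W_2})+rk(\mu_{1,M\mid W_1\otimes W_2}).$$
This additivity is exactly what allows the three separate rank computations to be combined with no cancellation between them.

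Next I would evaluate each summand. For the $\Lambda^2 W_i$ pieces I would use the $k=1$ instance of Proposition \ref{eqmu2k-1}: the kernel of $\mu_{1,M\mid \Lambda^2 W_i}$ is cut out by the single family of equations $\sum_{i+j=l}a_{ij}(j-i)=0$ for $3\le l\le 2n_i-1$ (this first bullet is precisely the $\mu_1$-level condition, which is all that is imposed when $k=1$). Since each coefficient $a_{ij}$ belongs to exactly one level set $l=i+j$, these equations involve pairwise disjoint sets of variables and are therefore linearly independent; counting the admissible values of $l$ gives $rk(\mu_{1,M\mid \Lambda^2 W_i})=2n_i-3$. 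This matches the formula $2n_i-4k+1$ of Theorem \ref{rank} specialized to $k=1$, but since that theorem is stated for $k\ge 2$ I would spell out the $k=1$ count explicitly. For the mixed piece I would simply invoke Theorem \ref{mu2tensor}, which yields $rk(\mu_{1,M\mid W_1\otimes W_2})=2r-12$.

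Finally I would assemble the three contributions and simplify. Adding them gives
$$rk(\mu_2)=(2n_1-3)+(2n_2-3)+(2r-12),$$
and substituting $n_1=\tfrac{r-6}{3}$, $n_2=\tfrac{2r-6}{3}$ (so $n_1+n_2=r-4$) together with $g=r-2$ reduces this to $4r-26=4g-18$, as claimed. The only genuinely delicate point is the additivity asserted in the first paragraph; once the equivariant decomposition into distinct isotypic components is in place, the rest is a direct application of Theorem \ref{rank} and Theorem \ref{mu2tensor} together with an elementary arithmetic simplification. I expect no further obstacle beyond making the $k=1$ edge case of Proposition \ref{eqmu2k-1} explicit, since the cited rank statements were formulated for $k\ge 2$.
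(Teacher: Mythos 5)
Your proposal is correct and takes essentially the same route as the paper, which obtains this theorem precisely by joining Theorem \ref{rank} (i.e.\ the ranks $2n_i-3$ on $\Lambda^2 W_i$, which at $k=1$ come from the first bullet of Proposition \ref{eqmu2k-1} exactly as you spell out) with Theorem \ref{mu2tensor} for the mixed piece, the rank additivity being implicit in the $\mathbb{Z}/3\mathbb{Z}$-equivariant splitting into distinct eigenspaces recorded earlier. Your explicit justification of that additivity and of the $k=1$ edge case (which Theorem \ref{rank} as stated, with $k\geq 2$, does not literally cover) fills in details the paper leaves tacit, but it is not a different argument.
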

		\section{Asymptotic directions}
		\label{asymptotictrig}
		In this section we will deal with Schiffer variations on the ramification points of the $g^1_3$ of our special trigonal curves. We will use some results of the previous section to make some computations of the second fundamental form on these kind of (higher) Schiffer variations. We will show the non existence of extra asymptotic directions in some subspaces of $H^1(T_C)$ (see Theorem \ref{theochap3}). \\
		Throughout this section, let $p=(0,0)\in C$ be a ramification point of $\pi$, corresponding to $t_1=0$.
		\begin{remark}
			\label{basis}
			Assume now that $C$ is as in equation \eqref{trigonal1}. Recall that $W_1=\langle x\frac{dx}{y},x^2\frac{dx}{y},...,x^{n_1}\frac{dx}{y}\rangle $ and 
			$W_2=\langle x\frac{dx}{y^2},x^2\frac{dx}{y^2},...,x^{n_2}\frac{dx}{y^2}\rangle$ and we defined $\alpha_i=\frac{x^idx}{y}$ for $i=1,...,n_1$ and $\beta_j:=\frac{x^jdx}{y^2}$ for $j=1,...,n_2$.\\
			Note that we have $$ord_p(\alpha_i)=3i-2,$$ $$ord_p(\beta_j)=3j-3.$$
			Moreover, note that since $p$ is a Weierstrass point, we can choose a basis of $H^0(L)$ given by $s\in H^0(L(-3p))$ and $t\in H^0(L)\setminus H^0(L(-p))$.
		\end{remark}
		Let $Q\in I_2$. From Remark \ref{qij}, we have $$Q=\sum\limits_{1\leq i<j\leq n_1} a_{ij}Q_{ij}+\sum\limits_{1\leq i<j\leq n_2} a'_{ij}Q'_{ij}+\sum \limits_{\substack{1 \leq i\leq n_1  \\ 1\leq j \leq n_2}} a''_{ij}Q''_{ij} =\sum \limits_{r,m=1}^u c_{rm} \gamma_r \otimes \gamma_m,$$ where $\gamma_1,...,\gamma_u$ are elements of $H^0(K_C)$ and $$Q_{ij}:=s\alpha_i \odot t\alpha_j - s\alpha_j \odot t\alpha_i,$$ $$Q'_{ij}:=s\beta_i \odot t\beta_j - s\beta_j \odot t\beta_i,$$ $$Q''_{ij}:=s\alpha_i \odot t\beta_j - t\alpha_j \odot s\beta_i.$$
		\begin{proposition}
			\label{vanishing}
			Let $C$ be a trigonal curve of genus $g\geq 4$ as in \eqref{trigonal1}, $p\in C$ a Weierstrass point and let $Q\in I_2$. Let $\gamma_1,...,\gamma_u \in H^0(K_C)$ where locally $\gamma_r=g_rdz$ such that $$Q=\sum \limits_{r,m=1}^u c_{rm} \gamma_r \otimes \gamma_m=\sum \limits_{1\leq i<j\leq n_1} a_{ij}Q_{ij}+\sum \limits_{1\leq i<j\leq n_1} a'_{ij}Q'_{ij}+\sum \limits_{\substack{1 \leq i\leq n_2  \\ 1\leq j \leq n_2}} a''_{ij}Q''_{ij}$$ Then $$
			\sum \limits_{r,m=1}^u c_{rm} g_r^{(h)} g_m^{(l)}(p)=0 \quad \forall h+l\leq 3.
			$$
		\end{proposition}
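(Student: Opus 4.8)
The plan is to combine the explicit basis of $I_2$ furnished by Lemma \ref{lemmaiso} with the orders of vanishing at $p$ of the differentials entering the basis quadrics. By Remark \ref{basis} one has $\mathrm{ord}_p(\alpha_i)=3i-2$ and $\mathrm{ord}_p(\beta_j)=3j-3$, while the chosen basis of $H^0(L)$ satisfies $\mathrm{ord}_p(s)=3$ and $\mathrm{ord}_p(t)=0$. Since vanishing orders are additive under multiplication of sections, the four families of canonical forms occurring in $Q_{ij},Q'_{ij},Q''_{ij}$ satisfy
$$\mathrm{ord}_p(s\alpha_i)=3i+1,\qquad \mathrm{ord}_p(s\beta_j)=3j,\qquad \mathrm{ord}_p(t\alpha_i)=3i-2,\qquad \mathrm{ord}_p(t\beta_j)=3j-3.$$
The decisive features are: every \emph{$s$-form} $s\alpha_i,s\beta_j$ vanishes at $p$ to order $\geq 3$; the only form of order $0$ is $t\beta_1$; and the only $s$-form of order $\leq 3$ is $s\beta_1$, which has order exactly $3$. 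Writing $\gamma_r=g_r\,dz$, so that $g_r^{(h)}(p)=0$ whenever $h<\mathrm{ord}_p(\gamma_r)$, I set $A_{hl}:=\sum_{r,m}c_{rm}g_r^{(h)}(p)g_m^{(l)}(p)$, which is symmetric in $h,l$ because $Q$ is a symmetric tensor; the goal is $A_{hl}=0$ for $h+l\leq 3$.

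The structural observation driving the argument is that each generator $Q_{ij},Q'_{ij},Q''_{ij}$ is a sum of monomials of the form $(s\text{-form})\odot(t\text{-form})$: in every term exactly one factor is an $s$-form and the other is a $t$-form, and no squared monomial occurs. Consequently, in $Q=\sum_{r,m}c_{rm}\gamma_r\otimes\gamma_m$ the coefficient $c_{rm}$ is supported on unordered pairs $\{\gamma_r,\gamma_m\}$ made of exactly one $s$-form and one $t$-form. I would then split $A_{hl}$ according to whether the $s$-form is $\gamma_r$ (so it carries the $h$-th derivative) or $\gamma_m$ (so it carries the $l$-th derivative). Since every $s$-form has order $\geq 3$, the first part vanishes as soon as $h\leq 2$ and the second as soon as $l\leq 2$. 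Hence $A_{hl}=0$ whenever $h\leq 2$ and $l\leq 2$, which already kills every pair with $h+l\leq 3$ except $(h,l)=(0,3)$ and its symmetric partner $(3,0)$.

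For the remaining case $(0,3)$ one argues as follows. In the part where the $s$-form carries the $0$-th derivative the factor $g_r(p)$ vanishes, so only the part where the $s$-form $\gamma_m$ carries the third derivative can survive. A nonzero contribution would require $g_m^{(3)}(p)\neq 0$, forcing $\gamma_m$ to be an $s$-form of order exactly $3$, i.e. $\gamma_m=s\beta_1$, together with $g_r(p)\neq 0$, forcing the paired $t$-form $\gamma_r$ to have order $0$, i.e. $\gamma_r=t\beta_1$. Thus the only possibly surviving term is the coefficient of $s\beta_1\odot t\beta_1$. But this monomial appears in no basis quadric: in $Q'_{ij}=s\beta_i\odot t\beta_j-s\beta_j\odot t\beta_i$ the strict inequality $i<j$ excludes the diagonal term $s\beta_1\odot t\beta_1$, while in $Q''_{ij}$ the form $s\beta_1$ is paired only with some $t\alpha_j$ and $t\beta_1$ only with some $s\alpha_i$. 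Hence $c_{s\beta_1,t\beta_1}=0$ and $A_{03}=0$, which completes the proof.

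I expect the main obstacle to be conceptual rather than computational: one must resist the natural but false temptation to claim that all the generating forms vanish to high order at $p$ — indeed $t\beta_1$ does not vanish at $p$ at all, having order $0$ there — and instead exploit the bipartite $s$/$t$ pairing, which guarantees that each surviving monomial always carries one factor of order $\geq 3$. The only genuinely delicate point is then the boundary case $(0,3)$, whose vanishing rests on the combinatorial fact that the diagonal monomial $s\beta_1\odot t\beta_1$ is absent from the basis of $I_2$, precisely because of the constraint $i<j$ in the definition of $Q'_{ij}$.
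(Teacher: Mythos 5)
Your proof is correct, and it takes a genuinely different route from the paper's. The paper proves the vanishing by expanding $\sum_{r,m}c_{rm}g_r^{(h)}g_m^{(l)}(p)$ with the Leibniz rule into the double sum of terms $(s^{(h-n)}t^{(l-m)}-t^{(h-n)}s^{(l-m)})(p)$ against brackets in the derivatives of the $\alpha_i,\beta_j$ (its Equation \eqref{eq2}), and then runs a case analysis: $h+l=0$ uses $Q\in I_2$, $h+l=1$ uses that $\mu_1$ kills symmetric tensors, $h+l=2$ uses that $p$ is a Weierstrass point, and $h+l=3$ splits into subcases on $n+m$ using $s\in H^0(L(-3p))$ and $\alpha_k(p)=0$. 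You bypass the Leibniz expansion entirely: you record the total orders $\mathrm{ord}_p(s\alpha_i)=3i+1$, $\mathrm{ord}_p(s\beta_j)=3j$, $\mathrm{ord}_p(t\alpha_i)=3i-2$, $\mathrm{ord}_p(t\beta_j)=3j-3$, observe that every monomial of every basis quadric pairs exactly one $s$-form (order $\geq 3$) with one $t$-form, so all pairs with $h\leq 2$ and $l\leq 2$ die at a stroke, and the single boundary case $(h,l)=(0,3)$ is settled by the combinatorial absence of $s\beta_1\odot t\beta_1$ from the basis (the constraint $i<j$ in $Q'_{ij}$ and the cross pairings in $Q''_{ij}$); this check is sound. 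Your argument is leaner and in fact slightly stronger, since it never invokes the quadric relation itself, only the monomial shape of the basis of $I_2$. What the paper's heavier expansion buys is reusability: Equation \eqref{eq2} is precisely the tool used afterwards (Theorem \ref{notas}, Theorem \ref{xi3xi4}, and the isotropy statement for $Ker(\mu_2)$) to compute the first \emph{nonvanishing} values of $\rho(Q)(\xi_p^h\odot\xi_p^l)$ beyond the threshold, which a pure order count cannot produce. One point to phrase with care: the products $s\alpha_i,t\alpha_j,s\beta_i,t\beta_j$ are not linearly independent in $H^0(K_C)$ (for instance $s\alpha_i$ and $t\alpha_{i+1}$ coincide up to a scalar, since $s/t=x$ in a suitable trivialization), so your claim that $c_{rm}$ is ``supported on mixed pairs'' must be read at the level of the fixed expansion in the $Q_{ij},Q'_{ij},Q''_{ij}$, each occurrence of a section keeping its $s$- or $t$-role; this is harmless, because $\sum_{r,m}c_{rm}g_r^{(h)}(p)g_m^{(l)}(p)$ is the evaluation at the tensor $Q$ of a linear functional on $H^0(K_C)^{\otimes 2}$ and may therefore be computed monomial by monomial from that expansion, exactly as you do.
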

		
		\begin{proof}
			We have 
			\begin{multline}
				\label{eq2}
				\sum \limits_{r,m=1}^u c_{rm} g_r^{(h)} g_m^{(l)}(p)=\sum\limits_{n=0}^h\sum\limits_{m=0}^l  \binom{h}{n} \binom{l}{m}(s^{(h-n)}t^{(l-m)}-t^{(h-n)}s^{(l-m)})(p)\cdot \\ [\sum \limits_{1\leq i<j\leq n_1} a_{ij}(\alpha_i^{(n)}\alpha_j^{(m)}-\alpha_j^{(n)}\alpha_i^{(m)})(p)+ \\ +\sum \limits_{1\leq i<j\leq n_2} a'_{ij}(\beta_i^{(n)}\beta_j^{(m)}-\beta_j^{(n)}\beta_i^{(m)})(p)+\sum \limits_{\substack{1 \leq i\leq n_1 \\   1\leq j \leq n_2}} a''_{ij}(\alpha_i^{(m)}\beta_j^{(n)}-\alpha_j^{(n)}\beta_i^{(m)})(p)].
			\end{multline}
			where $h+l\leq 3$. We want to show that the right hand of equation \eqref{eq2} is zero. Actually, we only need to check the case $h+l=3$. In fact, if $h+l=0$, this is just because $Q\in I_2$. The case $h+l=1$ is zero, since $\mu_1$ vanishes on symmetric tensors; while for $h+l=2$ we get $0$ because $p$ is a Weierstrass point and hence $(s't-t's)(p)=0$. So let us assume $h+l=3$. Since $n\leq h$ and $m\leq l$, we have $n+m\leq 3$.
			
			\begin{itemize}
				\item The case $n+m=3$ is trivial, since then $n=h$ and $m=l$, hence we get (up to scalar) $(st-ts)=0$.
				\item If $n+m=2$ we either have $n=h-1, m=l$ or the symmetric condition, interchanging the roles of $n,m$. In this case, we get $(s't-t's)(p)=0$ because of the choice of $s$.
				\item If $n+m=1$, then $n=h-2, m=l$ or the reverse condition, or $n=h-1,m=l-1$. In the first case we get $(s''t-t''s)(p)=0$, again because $s\in H^0(L(-3p))$, while in the second one we have $(s't'-t's')=0$.
				\item Finally, if $n=m=0$, then every summand inside the square brackets is zero. In fact, in the first two summands we get respectively $(\alpha_i\alpha_j-\alpha_j\alpha_i)=0$ and $(\beta_i\beta_j-\beta_j\beta_i)=0$. The last term is $(\alpha_i\beta_j-\alpha_j\beta_i)(p)=0$ because $\alpha_k(p)=0$ for every $k=1,...,n_1$.
			\end{itemize}
		\end{proof}

		\begin{theorem}
			\label{notas}
			Let $C$ be a trigonal curve of genus $g\geq 7$ as in \eqref{trigonal1} and let us consider the space $V=\langle \xi_p^1,\xi_p^2 \rangle \subset H^1(T_C)$. The only asymptotic direction in $V$ are in $\langle\xi_p^1\rangle$.
		\end{theorem}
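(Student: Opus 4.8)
The plan is to reduce the statement to the single assertion that the second-order Schiffer variation $\xi_p^2$ is \emph{not} asymptotic, and then to detect this by exhibiting one explicit quadric of $I_2$ on which the relevant value of $\rho$ is nonzero.

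First I would feed Proposition \ref{vanishing} into Proposition \ref{remarkfrediani}. Since $p$ is a Weierstrass point and $C$ is as in \eqref{trigonal1}, Proposition \ref{vanishing} shows that every $Q\in I_2$ satisfies $\sum_{r,m}c_{rm}g_r^{(h)}g_m^{(l)}(p)=0$ for all $h+l\leq 3$, i.e. the hypothesis of Proposition \ref{remarkfrediani} holds with $m=3$. Hence $\rho(Q)(\xi_p^n\odot\xi_p^l)=0$ whenever $n+l\leq 3$, for \emph{every} $Q$; in particular $\rho(Q)(\xi_p^1\odot\xi_p^1)=\rho(Q)(\xi_p^1\odot\xi_p^2)=0$. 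Writing a general $v\in V$ as $v=a\,\xi_p^1+b\,\xi_p^2$, bilinearity then collapses the second fundamental form to
$$\rho(Q)(v\odot v)=b^2\,\rho(Q)(\xi_p^2\odot\xi_p^2)\qquad\forall\,Q\in I_2.$$
Thus every vector of $\langle\xi_p^1\rangle$ is asymptotic, and if some $Q_0$ has $\rho(Q_0)(\xi_p^2\odot\xi_p^2)\neq 0$, then an asymptotic $v$ is forced to have $b=0$, i.e. $v\in\langle\xi_p^1\rangle$. So it suffices to produce such a $Q_0$.

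Next I would make the value explicit through Proposition \ref{remarkfrediani} with $m=3$ and $n=l=2$ (so that $n+l=m+1$), which gives
$$\rho(Q)(\xi_p^2\odot\xi_p^2)=2\pi i\Big(\tfrac{1}{12}S_{4,0}+\tfrac{1}{6}S_{3,1}\Big),\qquad S_{h,l}:=\sum_{i,j}a_{ij}f_i^{(h)}(0)f_j^{(l)}(0),$$
so the whole problem reduces to finding one $Q$ with $S_{4,0}+2S_{3,1}\neq 0$. The orders of vanishing at $p$ are decisive here: by Remark \ref{basis}, together with $s\in H^0(L(-3p))$ and $t\notin H^0(L(-p))$, the one-forms $t\beta_1,\,t\alpha_1,\,s\beta_1,\,s\alpha_1$ vanish at $p$ to orders $0,1,3,4$ respectively. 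Since $S_{4,0}$ needs a factor of order $0$ and $S_{3,1}$ a factor of order $\leq 1$, the ``diagonal'' quadrics $Q_{ij}$ and $Q'_{ij}$ contribute nothing (every $\tfrac{dx}{y}$-form has order $\equiv 1\pmod 3$, and in $Q'_{1j}$ the order-zero form $t\beta_1$ only meets $s\beta_j$ of order $3j\geq 4$, the case $j=1$ being degenerate). Only a \emph{mixed} quadric can detect $\xi_p^2$.

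The decisive step, which I expect to be the main obstacle, is therefore the direct computation on $Q''_{11}=s\alpha_1\odot t\beta_1-t\alpha_1\odot s\beta_1\in I_2$. Using the local uniformizer $z=y$ at $p$, so that $x=e_0z^3+O(z^6)$ with $e_0\neq 0$ and each of the four forms is a power series in $z$ advancing in steps of $3$, one expands them and reads off that $t\beta_1$ (order $0$) paired with $s\alpha_1$ (order $4$) makes $S_{4,0}\neq 0$, while $t\alpha_1$ (order $1$) paired with $s\beta_1$ (order $3$) makes $S_{3,1}\neq 0$; up to the normalization of $\odot$ one finds $S_{4,0}=216\,e_0^3$ and $S_{3,1}=-54\,e_0^3$, whence
$$\rho(Q''_{11})(\xi_p^2\odot\xi_p^2)=2\pi i\Big(\tfrac{1}{12}\cdot 216\,e_0^3-\tfrac{1}{6}\cdot 54\,e_0^3\Big)=18\pi i\,e_0^3\neq 0.$$
The point that genuinely needs care is that the two contributions $\tfrac{1}{12}S_{4,0}$ and $\tfrac{1}{6}S_{3,1}$ could a priori cancel, so one must keep track of the actual leading Taylor coefficients rather than merely their non-vanishing. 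This contradicts the asymptoticity of $\xi_p^2$ and, by the reduction above, proves that the only asymptotic directions in $V$ lie in $\langle\xi_p^1\rangle$. Finally I would note that the hypothesis $g\geq 7$ is exactly what guarantees $n_1\geq 1$, hence that $\alpha_1$ and the quadric $Q''_{11}$ exist.
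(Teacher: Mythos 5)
Your proposal is correct and takes essentially the same route as the paper: the identical reduction via Proposition \ref{vanishing} and Proposition \ref{remarkfrediani} to showing $\rho(Q)(\xi_p^2\odot\xi_p^2)\neq 0$ for a single quadric, and the identical witness $Q''_{11}$, whose existence you justify by the same genus bound. The only difference is cosmetic: you evaluate the two surviving terms $\tfrac{1}{12}S_{4,0}+\tfrac{1}{6}S_{3,1}$ by explicit Taylor expansion in the uniformizer $z=y$ (obtaining $18\pi i\,e_0^3$), while the paper tracks vanishing orders symbolically through its Equation \eqref{eq2} and lands on $-\pi i\,s^{(3)}(p)t(p)(\alpha_1'\beta_1)(p)\neq 0$ --- the same computation and the same non-cancellation conclusion.
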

		
		\begin{proof}
			Let $v=a\xi_p^1+b\xi_p^2\in V$, where $b\neq 0$. From Proposition \ref{vanishing} we immediately get $\rho(Q)(\xi_p^1\odot \xi_p^2)=0$ for every $Q\in I_2$. Hence $\rho(Q)(v\odot v)=b^2\rho(Q)(\xi_p^2\odot \xi_p^2)$. From Proposition \ref{remarkfrediani}, we have
			\begin{equation}
				\label{rhoq}
				\rho(Q)(\xi^2_p \odot \xi^2_p) =  2 \pi i  \left[ \sum_{k=0}^{1} \left (  \sum^u_{r,m=1}c_{rm} g^{(4-k)}_r(0) g_m^{(k)}(0)\right)\frac{(2-k)}{k!(4-k)!}\right]
			\end{equation}
			Let us focus on the term inside the round brackets. We see that we must compute $\sum\limits_{r,m=1}^u c_{rm} g^{(4)}_r(0) g_m(0)$ and $\sum\limits _{r,m=1}^u c_{rm} g^{'''}_r(0) g_m^{'}(0)$. In order to do that, we use again \eqref{eq2}, where now $h=4,l=0$ and $h=3,l=1$. 
			In both cases we can have $n+m=0,1,2,3,4$. Arguing exactly in the same way as in Proposition \ref{vanishing}, one can check that the only case that matters is $n+m=1$, the others are all zero. 
			\begin{itemize}
				\item Let us consider $\sum \limits_{r,m=1}^u c_{rm} g_r^{(4)} g_m(p)$. Here, since $n+m=1$, the only possibility is $n=1,m=0$. From Equation \eqref{eq2}, we have 
				\begin{multline}
					\sum \limits_{r,m=1}^u c_{rm} g_r^{(4)} g_m(p)=  \binom{4}{1}s^{(3)}(p)t(p)\cdot \\ [\sum \limits_{1\leq i<j\leq n_1} a_{ij}(\alpha_i^{'}\alpha_j-\alpha_j^{'}\alpha_i)(p)+ \\ +\sum \limits_{1\leq i<j\leq n_2} a'_{ij}(\beta_i^{'}\beta_j-\beta_j^{'}\beta_i)(p)+\sum \limits_{\substack{1 \leq i\leq n_1 \\   1\leq j \leq n_2}} a''_{ij}(\alpha_i\beta_j'-\alpha_j' \beta_i)(p)].
				\end{multline}
				Let us focus on the terms inside the square brackets. By Remark \ref{basis}, $ord_p(\alpha_i')=3i-3$ and $ord_p(\beta_i')=3i-4$. Hence if $i\geq 2$, $\alpha_i'(p)=\beta_i'(p)=0$, moreover if $i=1$, we have $j\geq 2$ and so $\alpha_j'(p)=\beta_j'(p)=0$. We get that the first two summands are zero. In the last one, the only term that may survive is $-4a''_{11}s^{(3)}(p)t(p)(\alpha_1'\beta_1)(p)$. Since $s^{(3)}(p)t(p)\alpha_1'\beta_1(p)\neq 0$, it depends whether $a''_{11}$ is zero or not.
				\item Now let us consider $\sum \limits_{r,m=1}^u c_{rm} g_r^{(3)} g_m
				'(p)$. Here, the possibilities are $n=1,m=0$ or $n=0,m=1$: but in the first case we get $(s''t'-t''s')(p)=0$ by the choice of $s$. By Equation \eqref{eq2}, we get
				\begin{multline}
					\sum \limits_{r,m=1}^u c_{rm} g_r^{(3)} g_m'(p)=  s^{(3)}(p)t(p)\cdot \\ [\sum \limits_{1\leq i<j\leq n_1} a_{ij}(\alpha_i^{'}\alpha_j-\alpha_j^{'}\alpha_i)(p)+ \\ +\sum \limits_{1\leq i<j\leq n_2} a'_{ij}(\beta_i^{'}\beta_j-\beta_j^{'}\beta_i)(p)+\sum \limits_{\substack{1 \leq i\leq n_1 \\   1\leq j \leq n_2}} a''_{ij}(\alpha_i^{'}\beta_j-\alpha_j \beta_i')(p)].
				\end{multline}
				As before, the only term that survives is 
				$-a''_{11}s^{(3)}(p)t(p)(\alpha_1'\beta_1)(p)$. 
			\end{itemize}
			Hence \eqref{rhoq} becomes 
			\begin{equation}
				\rho(Q)(\xi_p^2\odot \xi_p^2)=-\pi ia''_{11}s^{(3)}(p)t(p)(\alpha_1'\beta_1)(p). 
			\end{equation}
			Hence if we take $Q''_{11}$, we get $$\rho(Q''_{11})(\xi_p^2\odot \xi_p^2)=-\pi is^{(3)}(p)t(p)(\alpha_1'\beta_1)(p)\neq 0$$
			and $v$ is not asymptotic, as desired. Notice that $Q''_{11}$ really exists, since $g\geq 7$ and hence $n_1,n_2\geq 1$.
		\end{proof}
		Let us now consider every cyclic trigonal curve, with equation as in \eqref{cyctrigonal}. In this case, recall that by Remark \ref{trig2trig3} we have
			$H^0(M)=W_1\bigoplus W_2$, where $$W_1=\langle x\frac{dx}{y},x^2\frac{dx}{y},...,x^{n_1}\frac{dx}{y}\rangle$$ and 
			$$
			W_2=\langle x\prod_{i=r_1+1}^{r}(x-t_i)\frac{dx}{y^2},x^2\prod_{i=r_1+1}^{r}(x-t_i)\frac{dx}{y^2},...,x^{n_2}\prod_{i=r_1+1}^{r}(x-t_i)\frac{dx}{y^2}\rangle.
			$$
		\begin{remark}
			\label{rem1}
			Clearly, $ord_p(\alpha_i)=3i-2$ as in the previous case, since the holomorphic forms are the same. Notice that the same happens for $\prod_{i=r_1+1}^r(x-t_i)\beta_i$, because $\prod_{i=r_1+1}^r(x-t_i)$ does not vanish at $p$. Hence we also have $ord_p(\beta_i)=ord_p(\prod_{i=r_1+1}^r(x-t_i)\beta_i)=3i-3$.
		\end{remark}
		From Remark \ref{rem1}, we easily get the analogous results of Proposition \ref{vanishing} and Theorem \ref{notas} for every cyclic trigonal curve.
		\begin{proposition}
			Let $C$ be every cyclic trigonal curve, $p\in C$ a Weierstrass point and let $Q\in I_2$. Let $\gamma_1,...,\gamma_u \in H^0(K_C)$ where locally $\gamma_r=g_rdz$ such that $$Q=\sum \limits_{r,m=1}^u c_{rm} \gamma_r \otimes \gamma_m=\sum \limits_{1\leq i<j\leq n_1} a_{ij}Q_{ij}+\sum \limits_{1\leq i<j\leq n_2} a'_{ij}Q'_{ij}+\sum \limits_{\substack{1 \leq i\leq n_1  \\ 1\leq j \leq n_2}} a''_{ij}Q''_{ij}$$ Then $$
			\sum \limits_{r,m=1}^u c_{rm} g_r^{(h)} g_m^{(l)}(p)=0 \quad \forall h+l\leq 3.
			$$
		\end{proposition}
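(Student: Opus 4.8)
The plan is to show that the proof of Proposition \ref{vanishing} carries over word for word to the general cyclic trigonal curve \eqref{cyctrigonal}, the only curve-dependent ingredients being the orders of vanishing of the holomorphic forms at $p$ and the Weierstrass choice of a basis $s,t$ of $H^0(L)$. First I would expand the quantity $\sum_{r,m=1}^u c_{rm} g_r^{(h)} g_m^{(l)}(p)$ exactly as in \eqref{eq2}, applying the Leibniz rule to each factor $s\alpha_i$, $t\alpha_j$, $s(g\beta_j)$, etc., where $g=\prod_{i=r_1+1}^{r}(x-t_i)$. This expansion is purely formal and uses nothing about the defining equation; for $h+l\leq 3$ it yields a sum over pairs $(n,m)$ with $n\leq h$, $m\leq l$ of the factors $\bigl(s^{(h-n)}t^{(l-m)}-t^{(h-n)}s^{(l-m)}\bigr)(p)$ multiplied by the bracketed differences in the forms $\alpha_i$ and $g\beta_j$.

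Next I would reproduce the case analysis on $n+m$. As in Proposition \ref{vanishing}, the cases $h+l\leq 2$ are immediate ($Q\in I_2$, $\mu_1$ vanishes on symmetric tensors, and $(s't-t's)(p)=0$), so only $h+l=3$ requires argument. The subcases $n+m=3,2,1$ each reduce to the vanishing of some $\bigl(s^{(a)}t^{(b)}-t^{(a)}s^{(b)}\bigr)(p)$, which holds because of the choice $s\in H^0(L(-3p))$ and $t\in H^0(L)\setminus H^0(L(-p))$. This choice is available on every curve of the form \eqref{cyctrigonal}, since the point $p=(0,0)$ corresponding to $t_1=0$ is a total ramification point of $\pi$, hence a Weierstrass point; so these three subcases close without any change.

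The only subcase that touches the forms themselves is $n=m=0$. Here the first two bracketed differences vanish identically by commutativity of multiplication, exactly as before, while the mixed term becomes $\bigl(\alpha_i\,(g\beta_j)-\alpha_j\,(g\beta_i)\bigr)(p)$. To conclude that this vanishes I would invoke Remark \ref{rem1}: the generators of $W_1$ are literally unchanged, so $ord_p(\alpha_k)=3k-2\geq 1$ and thus $\alpha_k(p)=0$ for all $k=1,\dots,n_1$; consequently both summands vanish irrespective of the value of $g\beta_j$ at $p$. Note that $g$ does not vanish at $p$, so $g\beta_1$ is nonzero at $p$, but this is harmless precisely because it is always paired with a factor $\alpha_i$ that does vanish.

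I do not expect any genuine obstacle here: the whole content is to verify that the original argument used the special equation \eqref{trigonal1} only through the orders of vanishing recorded in Remark \ref{basis}, and that Remark \ref{rem1} provides the identical orders $ord_p(\alpha_i)=3i-2$ and $ord_p(g\beta_j)=3j-3$ for the general family. The single point worth a second look is the mixed term in the $n=m=0$ case, where $g\beta_1$ survives at $p$; but as noted it is annihilated by its $\alpha$-factor, so the argument closes exactly as in Proposition \ref{vanishing}.
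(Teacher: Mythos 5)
Your proposal is correct and matches the paper's own argument: the paper proves this proposition simply by appealing to Remark \ref{rem1} and rerunning the proof of Proposition \ref{vanishing}, exactly as you do. You in fact spell out the one point the paper leaves implicit, namely that the only curve-dependent step is the $n=m=0$ mixed term, where $g\beta_j$ need not vanish at $p$ but is always paired with a factor $\alpha_i$ with $\alpha_i(p)=0$, so the argument closes unchanged.
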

		Hence we have the following  theorem.
		\begin{theorem}
			\label{theochap3}
            Let $C$ be any trigonal curve of genus $g\geq 7$
			which is a cyclic cover of $\mathbb{P}^1$. For every $p\in C$ ramification point of $\pi$, the only asymptotic direction in $\langle \xi_p,\xi_p^2\rangle$ are in $\langle\xi_p\rangle$.
		\end{theorem}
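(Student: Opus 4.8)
The plan is to transfer the proof of Theorem \ref{notas} to the general cyclic trigonal setting, the only new point being the presence of the common factor $g:=\prod_{i=r_1+1}^{r}(x-t_i)$ in the chosen basis of $W_2$. Write $v=a\xi_p+b\xi_p^2$ and suppose, for contradiction, that $v$ is asymptotic with $b\neq 0$. By the analogue of Proposition \ref{vanishing} just stated for arbitrary cyclic trigonal curves we have $\sum_{r,m}c_{rm}g_r^{(h)}g_m^{(l)}(p)=0$ for all $h+l\leq 3$; feeding this into Proposition \ref{remarkfrediani} with $m=3$ gives $\rho(Q)(\xi_p\odot\xi_p^2)=0$ for every $Q\in I_2$, so that $\rho(Q)(v\odot v)=b^2\rho(Q)(\xi_p^2\odot\xi_p^2)$. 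It therefore suffices to exhibit a single $Q\in I_2$ with $\rho(Q)(\xi_p^2\odot\xi_p^2)\neq 0$, which will contradict asymptoticity and force $b=0$, i.e. $v\in\langle\xi_p\rangle$.

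To compute $\rho(Q)(\xi_p^2\odot\xi_p^2)$ I would apply Proposition \ref{remarkfrediani} with $n=l=2$ and $m=3$, reducing it to a fixed nonzero multiple of $\sum_{r,m}c_{rm}g_r^{(4)}g_m(p)$ and $\sum_{r,m}c_{rm}g_r^{(3)}g_m'(p)$, and then expand both via the identity \eqref{eq2}, separating the contribution of the basis $s,t$ of $H^0(L)$ from that of the $\alpha_i,\beta_j$. The choice $s\in H^0(L(-3p))$, $t\in H^0(L)\setminus H^0(L(-p))$ annihilates every term except the one with $n+m=1$, exactly as in the proof of Theorem \ref{notas}. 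The decisive input is the order-of-vanishing bookkeeping, now justified by Remark \ref{rem1}: even though the basis vectors of $W_2$ carry the extra factor $g$, one still has $ord_p(\alpha_i)=3i-2$ and $ord_p(\beta_i)=3i-3$, hence $ord_p(\alpha_i')=3i-3$ and $ord_p(\beta_i')=3i-4$, so $\alpha_i'(p)=\beta_i'(p)=0$ for $i\geq 2$. Running through the three sums in \eqref{eq2} then kills the $\Lambda^2W_1$ and $\Lambda^2W_2$ blocks and leaves only the term coming from $a_{11}''$, proportional to $s^{(3)}(p)\,t(p)\,(\alpha_1'\beta_1)(p)$. The one genuinely new verification is that $(\alpha_1'\beta_1)(p)\neq 0$: since $ord_p(\beta_1)=0$ we have $\beta_1(p)\neq 0$ despite the factor $g$ (which is nonzero at $p$), and $ord_p(\alpha_1')=0$ gives $\alpha_1'(p)\neq 0$.

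Consequently $\rho(Q)(\xi_p^2\odot\xi_p^2)$ is a nonzero scalar times $a_{11}''$, and choosing $Q=Q_{11}''$—which exists because $g\geq 7$ forces $n_1,n_2\geq 1$—yields $\rho(Q_{11}'')(\xi_p^2\odot\xi_p^2)\neq 0$, the desired contradiction. I expect no serious obstacle here: the entire difficulty is purely notational, namely controlling the factor $g$ in the second summand $W_2$, and it dissolves as soon as Remark \ref{rem1} guarantees that the relevant vanishing orders, and hence the whole cancellation pattern of \eqref{eq2}, are identical to those of the special family \eqref{trigonal1} treated in Theorem \ref{notas}.
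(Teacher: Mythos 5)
Your proposal is correct and takes essentially the same route as the paper: the paper's proof of Theorem \ref{theochap3} also reduces to the computation in Theorem \ref{notas}, invoking Remark \ref{rem1} to guarantee that the vanishing orders $ord_p(\alpha_i)=3i-2$ and $ord_p(\beta_i)=3i-3$ are unaffected by the extra factor $\prod_{i=r_1+1}^{r}(x-t_i)$, which is nonzero at $p$. Your write-up simply makes explicit what the paper leaves implicit, namely that the whole cancellation pattern of \eqref{eq2} carries over verbatim and that the surviving term $-\pi i\,a''_{11}s^{(3)}(p)t(p)(\alpha_1'\beta_1)(p)$ together with the choice $Q=Q''_{11}$ concludes the argument.
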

        \begin{proof}
            By Remark \ref{rem1}, just follow the computations in Theorem \ref{notas}, since the only thing that we use is the vanishing order in $p$ of $\alpha_i,\beta_i$. 
        \end{proof}
		With similar computations, we are also able to prove that $\xi_p^3$ and $\xi_p^4$ are not asymptotic directions for each of the three kinds of trigonal curves. 
		\begin{theorem}
			\label{xi3xi4}
			Let $C$ be any cyclic trigonal curve of genus $g\geq 10$. Then $\xi_p^3$ and $\xi_p^4$ are not asymptotic directions.
		\end{theorem}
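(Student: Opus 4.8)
The plan is to follow the strategy of Theorem~\ref{notas}: since $\xi_p^3$ (resp. $\xi_p^4$) is asymptotic if and only if $\rho(Q)(\xi_p^3\odot\xi_p^3)=0$ (resp. $\rho(Q)(\xi_p^4\odot\xi_p^4)=0$) for every $Q\in I_2$, it suffices to exhibit a single quadric on which the pairing is nonzero. Guided by the fact that a direction $\xi_p^k$ (whose pairing sits at total order $2k$) is detected by a quadric whose lowest non-vanishing paired derivative lies exactly at order $2k$ --- just as $Q''_{11}$, with lowest order $4$, detected $\xi_p^2$ in Theorem~\ref{notas} --- the natural candidates are the lowest-index basis quadrics of Remark~\ref{qij}: for $\xi_p^3$ I would take $Q=Q'_{12}=s\beta_1\odot t\beta_2-s\beta_2\odot t\beta_1$, and for $\xi_p^4$ I would take $Q=Q_{12}=s\alpha_1\odot t\alpha_2-s\alpha_2\odot t\alpha_1$. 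Both lie in $I_2$ by Lemma~\ref{lemmaiso}, and they exist as soon as $n_1,n_2\geq 2$. This is where the hypothesis enters: from $n_1=\frac{r_1+2r_2-6}{3}$ and $n_2=\frac{2r_1+r_2-6}{3}$ one checks $n_1,n_2\geq 2$ for every cyclic trigonal curve of genus $g\geq 10$, the binding constraint being $\xi_p^4$, which needs $\alpha_1,\alpha_2\in W_1$, i.e. $n_1\geq 2$.

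The computation is driven by Proposition~\ref{remarkfrediani}, applied at $p$. First I would record the vanishing orders at $p$ of the sections entering the quadrics, which follow from Remark~\ref{basis} (and Remark~\ref{rem1} for the general family of equation~\eqref{cyctrigonal}) together with $s\in H^0(L(-3p))$ and $t\in H^0(L)\setminus H^0(L(-p))$: namely $ord_p(s\alpha_i)=3i+1$, $ord_p(t\alpha_i)=3i-2$, $ord_p(s\beta_j)=3j$, $ord_p(t\beta_j)=3j-3$. Writing $Q=\sum c_{rm}\gamma_r\otimes\gamma_m$ with $\gamma_r=g_r\,dz$ locally, the key point is a pure order count: each monomial of $Q'_{12}$ pairs two sections whose orders sum to $6$, while each monomial of $Q_{12}$ pairs two sections whose orders sum to $8$. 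Since a product $u^{(h)}(p)\,v^{(l)}(p)$ can be nonzero only when $h\geq ord_p(u)$ and $l\geq ord_p(v)$, it follows that $\sum_{r,m}c_{rm}g_r^{(h)}g_m^{(l)}(p)=0$ for all $h+l\leq 5$ when $Q=Q'_{12}$, and for all $h+l\leq 7$ when $Q=Q_{12}$. Hence Proposition~\ref{remarkfrediani} applies with $m=5$ (resp. $m=7$), reducing the pairing to a single top-order sum.

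Finally I would evaluate that sum. For $Q'_{12}$ the sum in Proposition~\ref{remarkfrediani} ranges over $k=0,1,2$, i.e. over the pairs $(h,l)=(6,0),(5,1),(4,2)$; the same order count forces the last two to vanish, leaving a nonzero multiple of $(s\beta_2)^{(6)}(p)\,(t\beta_1)(p)$, which is nonzero because $ord_p(s\beta_2)=6$ and $ord_p(t\beta_1)=0$. Likewise, for $Q_{12}$ the sum ranges over $k=0,\dots,3$, i.e. over $(8,0),(7,1),(6,2),(5,3)$; all vanish except the term at $(7,1)$, a nonzero multiple of $(s\alpha_2)^{(7)}(p)\,(t\alpha_1)'(p)$, nonzero since $ord_p(s\alpha_2)=7$ and $ord_p(t\alpha_1)=1$. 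Thus $\rho(Q'_{12})(\xi_p^3\odot\xi_p^3)\neq 0$ and $\rho(Q_{12})(\xi_p^4\odot\xi_p^4)\neq 0$, so neither $\xi_p^3$ nor $\xi_p^4$ is asymptotic, and Remark~\ref{rem1} transports the argument verbatim to every cyclic family. The main obstacle is the bookkeeping in the last two steps: one must simultaneously check that enough low-order paired derivatives vanish for Proposition~\ref{remarkfrediani} to be applicable \emph{and} that after applying it exactly one term of the top-order sum survives, so that no cancellation can occur. What makes this work is the congruence structure of the orders, $ord_p(s\alpha_i),ord_p(t\alpha_i)\equiv 1\pmod 3$ and $ord_p(s\beta_j),ord_p(t\beta_j)\equiv 0\pmod 3$, which separates the relevant orders into disjoint residue classes and isolates the single nonzero contribution.
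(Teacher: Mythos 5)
Your proposal is correct and follows essentially the same route as the paper: the same test quadrics $Q'_{12}$ (for $\xi_p^3$) and $Q_{12}$ (for $\xi_p^4$), existence via $n_1,n_2\geq 2$ from $g\geq 10$, and the same application of Proposition \ref{remarkfrediani}, with your surviving terms $(s\beta_2)^{(6)}(p)\,(t\beta_1)(p)$ and $(s\alpha_2)^{(7)}(p)\,(t\alpha_1)'(p)$ agreeing, after a Leibniz expansion, with the paper's $-\pi i\binom{6}{3}s^{(3)}(p)t(p)\beta_1(p)\beta_2^{(3)}(p)$ and $-\pi i\binom{7}{4}s^{(3)}(p)t(p)\alpha_1'(p)\alpha_2^{(4)}(p)$. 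The only (welcome) difference is presentational: you replace the paper's term-by-term expansion of Equation \eqref{eq2} with a uniform vanishing-order count at $p$ (monomial orders summing to $6$, resp.\ $8$, split into residue classes mod $3$), which streamlines both the hypothesis check $h+l\leq 5$, resp.\ $h+l\leq 7$, and the isolation of the unique nonzero top-order term.
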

		\begin{proof}
			In order to show that $\xi_p^3$ and $\xi_p^4$ are not asymptotic, we need to find quadrics $Q_1,Q_2\in I_2$ such that $\rho(Q_1)(\xi_p^3\odot \xi_p^3)\neq 0$ and $\rho(Q_2)(\xi_p^4\odot \xi_p^4)\neq 0$. We prove it for curves as in \eqref{trigonal1}, the other cases being the same.
			\begin{itemize}
				\item First consider the case of $\xi_p^3$: let us take $Q'_{12}\in \Lambda^2W_2$. We notice that $Q'_{12}$ exists since $g\geq 10$, which implies $n_2\geq 2$. In order to use Proposition \ref{remarkfrediani}, we need to show that $\sum \limits_{r,m=1}^u c_{rm} g_r^{(h)} g_m^{(l)}(p)=0 \quad \forall h+l\leq 5.$ Since we already proved it for $h+l\leq 4$ in Theorem \ref{notas}, we are only left with $h+l=5$. From Equation \eqref{eq2}, we have 
				$$\sum \limits_{r,m=1}^u c_{rm} g_r^{(h)} g_m^{(l)}(p)=\sum\limits_{n=0}^h\sum\limits_{m=0}^l  \binom{h}{n} \binom{l}{m}(s^{(h-n)}t^{(l-m)}-t^{(h-n)}s^{(l-m)})(p)(a'_{12}(\beta_1^{(n)}\beta_2^{(m)}-\beta_2^{(n)}\beta_1^{(m)})(p).$$ Here the only non trivially zero case is $n+m=2$, hence $n=2,m=0$ or conversely. In both cases we get $\beta_2(p)=\beta_2^{(2)}(p)=0$, so the term is zero.\\ We are able to apply Proposition \ref{remarkfrediani} and we get
				$$\rho(Q'_{12})(\xi_p^3\odot \xi_p^3)=2 \pi i  \left( \sum_{k=0}^{2} \left (  \sum^u_{r, m=1}c_{rm} g^{(6-k)}_r(0) g_m^{(k)}(0)\right)\frac{(3-k)}{k!(6-k)!}\right).$$ For $k=0$, in the term inside the interior brackets, we have $$\sum \limits_{r,m=1}^u c_{rm} g_r^{(6)} g_m(p)=\sum\limits_{n=0}^h\sum\limits_{m=0}^l  \binom{h}{n} \binom{l}{m}(s^{(h-n)}t^{(l-m)}-t^{(h-n)}s^{(l-m)})(p)(\beta_1^{(n)}\beta_2^{(m)}-\beta_2^{(n)}\beta_1^{(m)})(p).$$ The only non trivial term is for $n=3,m=0$, which yields $-\binom{6}{3}s^3(p)t(p)\beta_1(p)\beta_2^{(3)}(p)\neq 0$. While if $k=1,2$ it is easy to see that we get zero using the vanishing of $\beta_i$ and its derivatives. We finally have 
				$$\rho(Q'_{12})(\xi_p^3\odot \xi_p^3)=-\pi i \binom{6}{3}s^3(p)t(p)\beta_1(p)\beta_2^{(3)}(p)\neq 0,$$ as desired.
				\item Now consider the case of $\xi_p^4$. Let us now take $Q_{12}\in \Lambda^2W_1$: indeed, $Q_{12}$ exists since $g\geq 10$ and hence $n_1\geq 2$. In order to compute $\rho(Q_{12})(\xi_p^4\odot \xi_p^4)$, by Proposition \ref{remarkfrediani} one need to show that
				$\sum \limits_{r,m=1}^u c_{rm} g_r^{(h)} g_m^{(l)}(p)=0 \quad \forall h+l\leq 7.$ This is easy to prove, using the vanishing of $\alpha_i$ and its derivatives. Finally, one gets
				$$\rho(Q_{12})(\xi_p^4\odot \xi_p^4)=-\pi i \binom{7}{4}s^{3}(p)t(p)\alpha_1'(p)\alpha_2^{(4)}(p)\neq 0.$$
				We omit the details, since the computations are very similar to the previous case.
			\end{itemize}
		\end{proof}
		As a corollary of Theorem \ref{notas} and Theorem \ref{xi3xi4} we have this result.
		\begin{theorem}
			\label{theoxi1x4}
			Let $C$ be any trigonal curve of genus $g\geq 10$ which is a cyclic cover of $\mathbb{P}^1$. For every ramification point $p$ of $\pi$, the only asymptotic directions in $\langle \xi_p^1,\xi_p^2,\xi_p^3,\xi_p^4\rangle$ are in $\langle\xi_p\rangle$.
		\end{theorem}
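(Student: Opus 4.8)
The plan is to run the standard ``maximal index'' argument (as in the proof of Theorem~\ref{asytrig} and in \cite{Frediani2025}): write a putative asymptotic direction as $v = a_1\xi_p^1 + a_2\xi_p^2 + a_3\xi_p^3 + a_4\xi_p^4$, assume $v\notin\langle\xi_p\rangle$ so that some $a_k\neq 0$ with $k\geq 2$, take $k$ maximal, and then produce a single quadric $Q\in I_2$ whose pairing $\rho(Q)(v\odot v)$ isolates the top diagonal term $a_k^2\,\rho(Q)(\xi_p^k\odot\xi_p^k)$ and is therefore nonzero, contradicting asymptoticity. Since $\xi_p^1 = \xi_p$, this forces $a_2 = a_3 = a_4 = 0$ and hence $v\in\langle\xi_p\rangle$.

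For each value $k\in\{2,3,4\}$ I would take exactly the quadric already singled out in the earlier results: $Q = Q''_{11}$ for $k=2$ (Theorem~\ref{notas}), $Q = Q'_{12}$ for $k=3$, and $Q = Q_{12}$ for $k=4$ (both from Theorem~\ref{xi3xi4}). The key observation is that each of these quadrics was shown, in the course of those proofs, to satisfy the stronger vanishing $\sum_{r,m} c_{rm}\,g_r^{(h)}g_m^{(l)}(p) = 0$ for all $h+l\leq 2k-1$ --- namely $h+l\leq 3$ via Proposition~\ref{vanishing} for $k=2$, and $h+l\leq 5$ resp.\ $h+l\leq 7$ via the explicit derivative computations in Theorem~\ref{xi3xi4} for $k=3,4$ --- together with $\rho(Q)(\xi_p^k\odot\xi_p^k)\neq 0$. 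Applying Proposition~\ref{remarkfrediani} with $m = 2k-1$ then yields $\rho(Q)(\xi_p^i\odot\xi_p^j) = 0$ for every $i+j\leq 2k-1$.

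It remains to expand $\rho(Q)(v\odot v) = \sum_{i,j\leq k} a_i a_j\,\rho(Q)(\xi_p^i\odot\xi_p^j)$, using that maximality of $k$ kills all terms with an index exceeding $k$. Among the surviving pairs with $i,j\leq k$, only $(i,j) = (k,k)$ has $i+j = 2k$; every other pair satisfies $i+j\leq 2k-1$ and hence vanishes by the previous step. Thus $\rho(Q)(v\odot v) = a_k^2\,\rho(Q)(\xi_p^k\odot\xi_p^k)\neq 0$, the desired contradiction. The only point requiring care --- the ``main obstacle'' in spirit, although it is really bookkeeping --- is to confirm that the vanishing established in Theorem~\ref{notas} and Theorem~\ref{xi3xi4} genuinely holds up to total order $2k-1$ (and not merely enough to apply Proposition~\ref{remarkfrediani} for the single top pairing), so that all intermediate cross-terms $\rho(Q)(\xi_p^i\odot\xi_p^j)$ with $i+j<2k$ are controlled simultaneously. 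The hypothesis $g\geq 10$ is exactly what guarantees the existence of the quadrics $Q'_{12}$ and $Q_{12}$ (i.e.\ $n_1,n_2\geq 2$), and by Remark~\ref{rem1} the vanishing orders of $\alpha_i,\beta_i$ at $p$ are unchanged for a general cyclic trigonal curve, so the argument applies verbatim to all families in \eqref{cyctrigonal}.
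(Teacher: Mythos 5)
Your proposal is correct and follows essentially the same route as the paper: it uses the same three quadrics $Q''_{11}$, $Q'_{12}$, $Q_{12}$, the same vanishing statements (Proposition \ref{vanishing} and the derivative computations up to total order $5$ resp.\ $7$ from Theorem \ref{xi3xi4}) combined with Proposition \ref{remarkfrediani} to kill all cross-terms, and isolates the top diagonal term $a_k^2\,\rho(Q)(\xi_p^k\odot\xi_p^k)\neq 0$. The only difference is presentational --- you run one uniform maximal-index argument for $k=2,3,4$, while the paper proceeds in two nested steps (first $\langle\xi_p^1,\xi_p^2,\xi_p^3\rangle$ via Theorem \ref{notas}, then the four-dimensional span) --- and your explicit check that the vanishing holds for all $h+l\leq 2k-1$, not just for the top pairing, is exactly the bookkeeping the paper's proof relies on implicitly.
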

		\begin{proof}
			First, let us consider the subspace $\langle \xi_p^1,\xi_p^2,\xi_p^3\rangle$ and take $v=a_1\xi_p^1+a_2\xi_p^2+a_3\xi_p^3$ an asymptotic direction which is not in $\langle \xi_p\rangle$. Then we must have $a_3\neq 0$, since otherwise $v\in \langle \xi_p\rangle$ by Theorem \ref{notas}. We have
			\begin{multline*}
				\rho(Q)(v\odot v)=a_1^2\rho(Q)(\xi_p^1\odot \xi_p^1)+2a_1a_2\rho(Q)(\xi_p^1\odot \xi_p^2)+2a_1a_3\rho(Q)(\xi_p^1\odot \xi_p^3)+\\+2a_2a_3\rho(Q)(\xi_p^2\odot \xi_p^3)+a_2^2\rho(Q)(\xi_p^2\odot \xi_p^2)+a_3^2\rho(Q)(\xi_p^3\odot \xi_p^3).
			\end{multline*}
			If we take $Q=Q'_{12}$, from Theorem \ref{xi3xi4}, we get that all the terms are zero except for $a_3^2\rho(Q'_{12})(\xi_p^3\odot \xi_p^3)$ which is different from zero. Since $a_3\neq0$, we are done.\\
			Now consider $\langle \xi_p^1,\xi_p^2,\xi_p^3,\xi_p^4\rangle$ and take $v=a_1\xi_p^1+a_2\xi_p^2+a_3\xi_p^3+a_4\xi_p^4$. Here we can assume $a_4\neq 0$ by the previous case, then the proof follows as before by taking $Q_{12}$ and applying Theorem \ref{xi3xi4}.
		\end{proof}
		As a final result, we will show that for a trigonal curve as in \eqref{trigonal1}, the space $\langle \xi_p^1,\xi_p^2,\xi_p^3,\xi_p^4\rangle$ is isotropic for $Ker(\mu_2)$.
		\begin{remark}
			\label{coeff2}
			Assume $Q\in Ker(\mu_2)$, this implies $\psi^{-1}(Q)\in Ker(\mu_{1,M})$. Then if $\psi^{-1}(Q)\in \Lambda^2 W_1$ we have $a_{12}=a_{13}=0$. If $\psi^{-1}(Q)\in \Lambda^2 W_2$ we have $a'_{12}=a'_{13}=0$. While if $\psi^{-1}(Q)\in W_1\otimes W_2 $ we get $a''_{11}=a''_{12}=a''_{21}=0$. This follows immediately by Theorem \ref{mu2tensor} and Proposition \ref{eqmu2k-1}.
		\end{remark}
		\begin{proposition}
			With the same notation and assumptions of Proposition \ref{vanishing}. If $Q\in Ker(\mu_2)$, then $$\rho(Q)(\xi_p^h\odot \xi_p^l)=0\quad \forall h+l\leq 9.$$
		\end{proposition}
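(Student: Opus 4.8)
The plan is to reduce everything to Frediani's criterion (Proposition \ref{remarkfrediani}): to conclude $\rho(Q)(\xi_p^h \odot \xi_p^l) = 0$ for all $h + l \leq 9$, it suffices to prove the bi-derivative vanishing
$$\sum_{r,m=1}^u c_{rm}\, g_r^{(h)}(0)\, g_m^{(l)}(0) = 0 \qquad \forall\, h + l \leq 9,$$
that is, to apply the proposition with $m = 9$. This strengthens Proposition \ref{vanishing}, which only yields the case $h + l \leq 3$ for an arbitrary $Q \in I_2$; the extra hypothesis $Q \in Ker(\mu_2)$ is precisely what pushes the vanishing up from order $3$ to order $9$.

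First I would expand each bi-derivative sum via \eqref{eq2}, writing it as a sum of monomials of the shape
$$\binom{h}{n}\binom{l}{m}\bigl(s^{(h-n)}t^{(l-m)} - t^{(h-n)}s^{(l-m)}\bigr)(p)\cdot (\text{bi-derivative of two forms})(p),$$
split according to the three types $Q_{ij}, Q'_{ij}, Q''_{ij}$. The numerical input is the order of vanishing at $p$. By Remark \ref{basis}, $ord_p(\alpha_i) = 3i - 2$ and $ord_p(\beta_j) = 3j - 3$, while $s \in H^0(L(-3p))$ and $t \notin H^0(L(-p))$, so $s^{(a)}(p) = 0$ for $a \leq 2$ whereas $t(p) \neq 0$. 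Consequently the Wronskian-type factor $\bigl(s^{(a)}t^{(b)} - t^{(a)}s^{(b)}\bigr)(p)$ vanishes unless $a \neq b$ and $\max(a,b) \geq 3$; its minimal nonzero ``cost'' is $a + b = 3$, realised only by $(a,b) = (3,0)$ or $(0,3)$.

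Then I would estimate, for each type, the minimal total order $h + l$ of a surviving monomial using the exact decomposition $h + l = (h-n) + (l-m) + (n+m)$: the form factor forces $n + m$ to be at least the combined vanishing order of the two forms involved, and the Wronskian factor forces $(h-n)+(l-m) \geq 3$. The decisive ingredient is Remark \ref{coeff2}: since $Q \in Ker(\mu_2)$, we have $a_{12} = a_{13} = 0$, $a'_{12} = a'_{13} = 0$ and $a''_{11} = a''_{12} = a''_{21} = 0$, so the lowest-index quadrics disappear. Hence the smallest admissible index sum is $i + j = 5$ for $Q_{ij}$ and $Q'_{ij}$, and $i + j = 4$ for $Q''_{ij}$, giving minimal total orders $h + l \geq 3 + (3\cdot 5 - 4) = 14$, $\geq 3 + (3\cdot 5 - 6) = 12$ and $\geq 3 + (3\cdot 4 - 5) = 10$ respectively. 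In all three cases the first possibly-nonzero bi-derivative occurs at order $\geq 10$, so every sum with $h + l \leq 9$ vanishes and Proposition \ref{remarkfrediani} gives the claim.

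I expect the main obstacle to be the bookkeeping for the mixed type $Q''_{ij}$, which is the binding case: here the two forms $\alpha_i$ and $\beta_j$ have different vanishing orders, and one must check that the combination appearing in \eqref{eq2} genuinely forces $n + m \geq 3(i+j) - 5$, so that the threshold is exactly $h + l = 10$ and no lower. This is consistent with, and can be cross-checked against, the explicit nonzero values computed in Theorems \ref{notas} and \ref{xi3xi4} for quadrics \emph{not} lying in $Ker(\mu_2)$ (for instance $\rho(Q_{12})(\xi_p^4 \odot \xi_p^4) \neq 0$ at order $8$ and $\rho(Q'_{12})(\xi_p^3 \odot \xi_p^3) \neq 0$ at order $6$): these show that the bound cannot be raised without the kernel hypothesis and serve as a sanity check on the order estimates.
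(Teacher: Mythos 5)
Your proposal is correct and follows essentially the same route as the paper's proof: reduce via Proposition \ref{remarkfrediani} to the bi-derivative vanishing for $h+l\le 9$, expand through \eqref{eq2}, and kill every monomial using the choice of $s\in H^0(L(-3p))$, the vanishing orders $ord_p(\alpha_i)=3i-2$, $ord_p(\beta_j)=3j-3$ from Remark \ref{basis}, and the coefficient vanishing of Remark \ref{coeff2}. The only difference is organizational: where the paper first restricts to $n+m\le 6$ and then runs an explicit case analysis on $(i,j)$ (including the delicate mixed case $i=j=2$), you add the Wronskian cost $(h-n)+(l-m)\ge 3$ to the per-type form-factor thresholds $n+m\ge 3(i+j)-4$, $3(i+j)-6$, $3(i+j)-5$, getting the uniform bounds $14$, $12$, $10$, which subsumes the paper's casework and has the bonus of showing the bound $h+l\le 9$ is sharp, consistently with the nonvanishing values in Theorems \ref{notas} and \ref{xi3xi4}.
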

		\begin{proof}
			By Proposition \ref{remarkfrediani}, it suffices to show that 
			$$
			\sum \limits_{r,m=1}^u c_{rm} g_r^{(h)} g_m^{(l)}(p)=0 \quad \forall h+l\leq 9.
			$$
			For reader's convenience, we recall Equation \eqref{eq2}.
			\begin{multline}
				\sum \limits_{r,m=1}^u c_{rm} g_r^{(h)} g_m^{(l)}(p)=\sum\limits_{n=0}^h\sum\limits_{m=0}^l  \binom{h}{n} \binom{l}{m}(s^{(h-n)}t^{(l-m)}-t^{(h-n)}s^{(l-m)})(p)\cdot \\ [\sum \limits_{1\leq i<j\leq n_1} a_{ij}(\alpha_i^{(n)}\alpha_j^{(m)}-\alpha_j^{(n)}\alpha_i^{(m)})(p)+ \\ +\sum \limits_{1\leq i<j\leq n_2} a'_{ij}(\beta_i^{(n)}\beta_j^{(m)}-\beta_j^{(n)}\beta_i^{(m)})(p)+\sum \limits_{\substack{1 \leq i\leq n_1 \\   1\leq j \leq n_2}} a''_{ij}(\alpha_i^{(m)}\beta_j^{(n)}-\alpha_j^{(n)}\beta_i^{(m)})(p)].
			\end{multline}
			where $h+l\leq 9$. So we need to show that the right hand side is zero. Arguing in a very similar way as in Proposition \ref{vanishing}, we see that we only have to study the cases when $n+m\leq 6$, since the others are trivially zero. We will focus separately on the three summands in the square brackets.
			\begin{itemize}
				\item Consider $$\sum \limits_{1\leq i<j\leq n_1} a_{ij}(\alpha_i^{(n)}\alpha_j^{(m)}-\alpha_j^{(n)}\alpha_i^{(m)})(p).$$ We may assume $m<n$, since if $m=n$ the expression is clearly zero; so $m<n\leq 6$. Assume first $i=1$. Then by Remark \ref{coeff2}, we must have $j\geq 4$. By Remark \ref{basis}, $\alpha_j^{(n)}(p)=0$ if $n\leq 3j-3$. If $j\geq 4$, $3j-3>6$, hence $\alpha_j^{(m)}(p)=\alpha_j^{(n)}(p)=0$ and we are done. If $i\geq 2$, then $j\geq 3$ and by the same argument we get that everything is zero, because $3j-3\geq 6$. So the first summand is always zero.
				\item Now consider $$\sum \limits_{1\leq i<j\leq n_2} a'_{ij}(\beta_i^{(n)}\beta_j^{(m)}-\beta_j^{(n)}\beta_i^{(m)})(p).$$ Again we may assume $m<n\leq 6$. As before, if $i=1$ we have $j\geq 4$ and $\beta_j^{(n)}(p)=0$ if $n\leq 3j-4$. Since $3j-4\geq 8$, we have zero. If we assume $i\geq 2$, then $j\geq 3$ and $\beta_j^{(m)}(p)=\beta_j^{(n)}(p)=0$ if $n\leq 5$. We are only left with the case $n=6$ and $m=0$: but this is zero because $\beta_i(p)=\beta_j(p)=0$ if $i\geq 2$.
				\item Finally, we have $$\sum\limits_{\substack{1 \leq i\leq n_1 \\   1\leq j \leq n_2}} a''_{ij}(\alpha_i^{(m)}\beta_j^{(n)}-\alpha_j^{(n)}\beta_i^{(m)})(p).$$ First, notice that here we cannot assume $m<n$. If $i=1$, now $j\geq 3$ by Remark \ref{coeff2}. We have $\alpha_j^{(n)}(p)=0$ if $n\leq 3j-3$ and $\beta_j^{(n)}(p)=0$ if $n\leq 3j-4$. Since $3j-3\geq 6$ and $3j-4\geq 5$, the only case that matters is $n=6,m=0$. This is clearly zero, since $\alpha_1(p)=0$. Actually, since if $i\geq 2$, $\alpha_i(p)=0$, the same argument shows that the expression is zero if $j\geq 3$. By symmetry, the same is true if $i\geq 3$. Hence the last case remaining is $i=j=2$. Now we can assume $m<n$: hence we must have $m\leq 2$, since $n+m\leq 6$. Since $a_2^{(m)}(p)=\beta_2^{(m)}(p)=0$ if $m\leq 2$, we are done. 
			\end{itemize}
		\end{proof}
		\begin{corollary}
			Let $C$ be a trigonal curve as in \eqref{trigonal1}. Then the space $\langle\xi_p^1,\xi_p^2,\xi_p^3,\xi_p^4\rangle$ is isotropic for $\rho(Q)$, for all $Q\in Ker(\mu_2)$.
		\end{corollary}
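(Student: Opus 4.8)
The plan is to derive the corollary as an immediate consequence of the Proposition above, the entire analytic content being already contained in the vanishing $\rho(Q)(\xi_p^h\odot\xi_p^l)=0$ for $h+l\le 9$.

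First I would recall that, for a fixed $Q$, the rule $(v,w)\mapsto\rho(Q)(v\odot w)$ is a symmetric bilinear form on $H^1(T_C)$. Saying that $V=\langle\xi_p^1,\xi_p^2,\xi_p^3,\xi_p^4\rangle$ is isotropic for $\rho(Q)$ means that this form vanishes identically on $V$; by bilinearity it suffices to show that $\rho(Q)(\xi_p^i\odot\xi_p^j)=0$ for all $1\le i,j\le 4$. Indeed, writing $v=\sum_{i=1}^4 a_i\xi_p^i$ and $w=\sum_{j=1}^4 b_j\xi_p^j$ one has
\[
\rho(Q)(v\odot w)=\sum_{i,j=1}^4 a_i b_j\,\rho(Q)(\xi_p^i\odot\xi_p^j).
\]

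The only observation needed is numerical: for $1\le i,j\le 4$ we always have $i+j\le 8<9$, so each evaluation $\rho(Q)(\xi_p^i\odot\xi_p^j)$ falls within the range $h+l\le 9$ covered by the Proposition above. Hence, for every $Q\in Ker(\mu_2)$, all these terms vanish, and therefore $\rho(Q)(v\odot w)=0$ for all $v,w\in V$; in particular $V$ is isotropic, as claimed.

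I do not expect a genuine obstacle here, since the difficulty has been front-loaded into the preceding Proposition, whose bound $h+l\le 9$ is calibrated precisely so that the four directions $\xi_p^1,\dots,\xi_p^4$, whose pairwise index sums never exceed $8$, are all accounted for. The only point worth flagging is that the Proposition must be applied under its standing hypotheses — $Q\in Ker(\mu_2)$ together with the vanishing orders at $p$ from Remark \ref{basis} and the coefficient relations $a_{12}=a_{13}=0$, $a'_{12}=a'_{13}=0$, $a''_{11}=a''_{12}=a''_{21}=0$ of Remark \ref{coeff2} — all of which hold by assumption, so the corollary follows at once.
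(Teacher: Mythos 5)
Your proof is correct and follows exactly the route the paper intends: the corollary is an immediate consequence of the preceding proposition, since for $1\le i,j\le 4$ one has $i+j\le 8\le 9$, and bilinearity of $\rho(Q)$ on the span does the rest (the paper accordingly gives no separate proof). One tiny clarification: the coefficient relations of Remark \ref{coeff2} are not extra hypotheses to verify but consequences of $Q\in Ker(\mu_2)$ used inside the proposition's proof, so your application is valid without further checking.
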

		We conclude with a final remark and a conjecture.
		\begin{remark}
			Families of cyclic trigonal curves for $g\geq 7$ are not totally geodesic. This was already known by (\cite{doi:10.1142/S0129167X15500056}, Corollary 5.9), but we obtained a new proof. They have dimension $r-3=g-1$.\\ If we denote by $\mathcal{X}$ such a family, we have $T_{[C]}\mathcal{X}=(H^1(T_C))^{\mathbb{Z}/3\mathbb{Z}}$, where we mean the invariant part with respect to the action of $\mathbb{Z}/3\mathbb{Z}$. We claim that $\xi_p^2\in (H^1(T_C))^{\mathbb{Z}/3\mathbb{Z}}$. Indeed, if we denote by $\zeta$ a generator of $\mathbb{Z}/3\mathbb{Z}$, defining $w=\zeta z$ we have $$
			\zeta\cdot \xi_p^2=\frac{\overline{\partial}\rho}{w^2}\frac{\partial}{\partial w}=\frac{\overline{\partial}\rho}{\zeta^2z^2}\frac{1}{\zeta}\frac{\partial}{\partial z}=\frac{\overline{\partial}\rho}{z^2}\frac{\partial}{\partial z}=\xi_p^2.
			$$
			Hence we get $\langle \xi_p^1, \xi_p^2,\xi_p^3,\xi_p^4\rangle \cap (H^1(T_C))^{\mathbb{Z}/3\mathbb{Z}}=\langle \xi_p^2\rangle $.\\
			From Theorem \ref{notas}, we have $Q''_{11}\in (I_2)^{\mathbb{Z}/3\mathbb{Z}}$ such that $\rho(Q''_{11})(\xi_p^2\odot \xi_p^2)\neq 0$. 
			This implies that $\xi_p^2$ is a tangent direction which is not annihilated by the second fundamental form, so $\mathcal{X}$ is not totally geodesic. 
		\end{remark}
		We conjecture that for every cyclic trigonal curve the only asymptotic direction in $\langle \xi_p^1,...,\xi_p^l\rangle$ is $\xi_p^1\quad \forall l<2g-2$. We have proven that we have the following chain of inclusions:
		$$
		Ker(\mu_{2\lfloor\frac{n_2}{2}\rfloor})\subsetneq Ker(\mu_{2\lfloor\frac{n_2}{2}\rfloor-2})\subsetneq...\subsetneq I_2.
		$$
	Having a complete description of $Ker(\mu_{2k})$ as in the hyperelliptic case (see \cite{faro2025highergaussianmapshyperelliptic}) would be very helpful to go on with similar computations and hopefully prove the claim. Unfortunately, computing the equations of $\mu_{2k}(Q)$ is quite difficult for $Q\in \psi (W_1\otimes W_2)$.

    \bibliographystyle{alpha} 
	\bibliography{references} 
\end{document}